\numberwithin{equation}{section} 
\theoremstyle{plain}
\newtheorem{thm}{Theorem}[section]
\newtheorem{lemma}[thm]{Lemma}
\newtheorem{cor}[thm]{Corollary}
\theoremstyle{definition}
\newtheorem{defin}[thm]{Definition}
\newtheorem{rmk}[thm]{Remark}
\newcommand{\R}{\mathbb{R}}
\newcommand{\eps}{\varepsilon}
\newcommand{\MP}{\mathcal{MP}}
\newcommand{\A}{\mathbb{A}}
\def\XXint#1#2#3{{\setbox0=\hbox{$#1{#2#3}{\int}$ }
\vcenter{\hbox{$#2#3$ }}\kern-.6\wd0}}
\newcommand\hhat[1]{%
\savestack{\tmpbox}{\stretchto{%
  \scaleto{%
    \scalerel*[\widthof{\ensuremath{#1}}]{\kern.1pt\mathchar"0362\kern.1pt}%
    {\rule{0ex}{\textheight}}
  }{\textheight}%
}{2.4ex}}%
\stackon[-6.9pt]{#1}{\tmpbox}%
}
\title[Scattering rigidity for standard stationary manifolds]{Scattering rigidity for standard stationary manifolds via timelike geodesics}
\author{Sebastián Muñoz-Thon}
\address{Department of Mathematics, Purdue University, West Lafayette, IN 47907.}
\email{smunozth@purdue.edu}
\begin{document}

\begin{abstract}
We study the scattering rigidity problem for standard stationary manifolds using timelike geodesics with a fixed momentum. Taking advantage of the symmetry of this manifolds, we use Hamiltonian reduction to show that this problem is related to scattering rigidity for $\MP$-systems, a problem studied before. This gives several new rigidity results (up to some gauge) for this kind of Lorentzian manifolds.
\end{abstract}

\maketitle

\section{Introduction} 

Scattering rigidity is one of the main topics in geometric inverse problems. It has been studied in depth in the Riemannian setting \cites{pu2005,su, croke14, suv16, guillarmou17, suv21}. See the recent book \cite{psu} for an extensive treatment of this topic. Rigidity has been generalized to the magnetic case, in which one deals with magnetic geodesics \cites{dpsu, herrerossca, herrerosmag, zhou18}. A further generalization is the case of $\MP$-systems, in which, in addition to a magnetic part, the action of the potential is also involved. Rigidity for these system has been studied in \cites{jo2007, az} and by the author in \cites{mt23, mt24}. They also appear in other inverse problems, see \cites{iw, lz}. One could also deal with more general curves, as is the case of the works \cites{fsu, uv, ad, zhang}.

On the other hand, rigidity results are not so extense in the semi-Riemannian setting. A study of the scattering relation was done in \cites{losu, sy, plamen}. Light ray transforms have been studied in the Lorentzian setting as well \cites{stefanov89, stefanov17, wang18, losu20, fiko, fio, vw}. Regarding rigidity, there are some results. In \cite{hu} the authors consider the problem of reconstructing the topological, differentiable, and conformal structure of subsets $S \subset M^{\text{int}}$ by boundary observations of light cones emanating from points in $S$, with light rays being reflected at $\partial M$. In \cite{klu} it is shown that the family of light observation sets corresponding to point sources at points of a subset of a manifold, determine uniquely the conformal type this subset. In \cite{uyz}, boundary rigidity is studied for cylindrical domains in $\R^{1+n}$ endowed with standard stationary metrics, using time separation functions. In \cite{eskin}, the author studies rigidity of cylindrical domains in $\R^{1+n}$ endowed with time independent Lorentzian metrics using null geodesics. Finally, in \cite{plamen}, P.~Stefanov studied scattering rigidity of standard stationary manifolds using lightlike geodesics.

As in some of the previous works, in this article we will deal with standard stationary manifolds (see Section \ref{sec:geo_prelim} for the definition). The geometry of these manifolds have been studied extensively in \cites{gm, harris1, masiello, rs, gp, sanchez, fs, germinario, js, cfs, cs, bg, harris2, ahr, hj, sz}. A concrete example is the Kerr spacetime, which describes the geometry of spacetime around a spherically symmetric rotating body, see \cite{kerr} for an extensive treatment of this spacetime. These spaces also appear in acoustics, see \cite{visser}.

In this work we focus on scattering rigidity for standard stationary manifolds. Concretely, we address the following question: \emph{to what extent are standard stationary manifolds are ``timelike'' scattering rigid?}. Specifically, we define a scattering relation using timelike vectors only. Furthermore, we fix the momentum between these vectors and the vector field $\partial_{t}$. We obtain the following:

\begin{thm} \label{thm:intro1}
    Consider two metrics $g=g_{h,\omega,\lambda}$ and $g'=g_{h',\omega',\lambda'}$ on $M=\R \times N$, so that $h|_{\partial M}=h'|_{\partial M}$, $i^* \omega=i^* \omega'$, $\lambda|_{\partial M}=\lambda'|_{\partial M}$, where $i$ is the embedding $i \colon \partial M \to M$. Assume that the $\MP$-systems $(N,h,d\rho \omega,\rho^{2}/(-2\lambda))$ and $(N,h',\rho d\omega',\rho^{2}/(-2\lambda'))$ are simple, and one of the following conditions holds:
    \begin{enumerate}
    \item $h'$ is conformal to $h$;
    \item The $\MP$-systems are real-analytic;
    \item $\dim N=2$.
    \end{enumerate}
    If $\mathcal{S}_{\rho,m}=\mathcal{S}_{\rho,m}'$, then there exist a diffeomorphism $f \colon N \to N$ fixing $\partial N$, a positive function $\mu \in C^{\infty}(N)$ with $\mu|_{\partial M}=1$, and a function vanishing on the boundary $\varphi \in C^{\infty}(N)$ so that
    \begin{equation} \label{eq:ssm_mrho_equiv_intro}
        h'=\frac{1}{\mu}f^{*}h, \quad \omega'=f^{*}\omega+d\left( \frac{\varphi}{\rho} \right), \quad \frac{1}{\lambda'}=\mu \left( \frac{1}{f^{*}\lambda}-\frac{m^{2}}{\rho^{2}} \right) +\frac{m^{2}}{\rho^{2}}.
    \end{equation}
    Furthermore, if we assume that the previous conditions hold for two different values of $\rho$, then $\mu=1$ in \eqref{eq:ssm_mrho_equiv_intro}.
\end{thm}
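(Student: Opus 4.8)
The backbone of my argument is Hamiltonian (Routh) reduction along the Killing field $\partial_t$. Writing the standard stationary metric as $g=-\lambda(dt+\omega)^2+h$, the coordinate $t$ is cyclic, so a geodesic $\gamma(s)=(t(s),x(s))$ carries a conserved momentum $\rho=\lambda(\dot t+\omega(\dot x))$ conjugate to $t$, while $m$ is fixed by the mass constraint $g(\dot\gamma,\dot\gamma)=-m^2$. The plan is to solve $\dot t=\rho/\lambda-\omega(\dot x)$, substitute into the geodesic Lagrangian, and compute the Routhian
\begin{equation*}
R=\tfrac12|\dot x|_h^2-\rho\,\omega(\dot x)+\frac{\rho^2}{2\lambda},
\end{equation*}
which is precisely the Lagrangian of the $\MP$-system $(N,h,\rho\,d\omega,\rho^2/(-2\lambda))$, with magnetic potential $\rho\omega$ and potential $V=\rho^2/(-2\lambda)$. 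Since the mass constraint gives $|\dot x|_h^2=\rho^2/\lambda-m^2$, the $\MP$-energy along $x(s)$ equals $\tfrac12|\dot x|_h^2+V=-m^2/2$. Thus timelike geodesics of $g_{h,\omega,\lambda}$ with momentum $\rho$ and mass $m$ correspond, after projection, to $\MP$-trajectories at the single energy level $E=-m^2/2$.

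Next I would show that the fixed-momentum timelike scattering relation $\mathcal{S}_{\rho,m}$ coincides with the $\MP$ scattering relation of $(N,h,\rho\,d\omega,\rho^2/(-2\lambda))$ at energy $-m^2/2$. The boundary hypotheses $h|_{\partial M}=h'|_{\partial M}$, $i^*\omega=i^*\omega'$, $\lambda|_{\partial M}=\lambda'|_{\partial M}$ ensure that the incoming/outgoing boundary vectors of the two descriptions are identified compatibly, so $\mathcal{S}_{\rho,m}=\mathcal{S}_{\rho,m}'$ transfers to equality of the two $\MP$ scattering relations. I then invoke the known scattering rigidity for $\MP$-systems (under simplicity together with one of the three listed conditions): equal scattering data force a diffeomorphism $f$ fixing $\partial N$, a positive function $\mu$ with $\mu|_{\partial N}=1$, and a boundary-vanishing $\psi$ so that the two systems are gauge equivalent, namely $h'=\mu^{-1}f^*h$, $\rho\,\omega'=f^*(\rho\,\omega)+d\psi$, and the potentials match through the Jacobi--Maupertuis relation $V'=E(1-\mu)+\mu\,f^*V$ at $E=-m^2/2$. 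I expect the main difficulty to lie here: carefully verifying that the fixed-momentum constraint and boundary data translate into the correct $\MP$ boundary vectors, including the reparametrization between proper time and $\MP$-time, and that no scattering information is lost under the projection to $N$.

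The remaining work is to unwind these three gauge relations into the stationary data. The metric relation is already $h'=\mu^{-1}f^*h$. Setting $\psi=\varphi$ and dividing by $\rho$ gives $\omega'=f^*\omega+d(\varphi/\rho)$, with $\varphi$ vanishing on $\partial N$ since $i^*\omega=i^*\omega'$. Substituting $V=\rho^2/(-2\lambda)$, $V'=\rho^2/(-2\lambda')$, $f^*V=\rho^2/(-2f^*\lambda)$ and $E=-m^2/2$ into the Jacobi relation and multiplying by $-2/\rho^2$ produces exactly
\begin{equation*}
\frac{1}{\lambda'}=\mu\left(\frac{1}{f^*\lambda}-\frac{m^2}{\rho^2}\right)+\frac{m^2}{\rho^2},
\end{equation*}
which is \eqref{eq:ssm_mrho_equiv_intro}.

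For the final statement, I would fix $m$ and apply the above to two momenta $\rho_1\neq\rho_2$, producing gauges $(f_1,\mu_1,\varphi_1)$ and $(f_2,\mu_2,\varphi_2)$. The key observation is that the metric relation $h'=\mu_i^{-1}f_i^*h$ involves only the $\rho$-independent metrics $h,h'$, so $f_1\circ f_2^{-1}$ is a conformal transformation of $(N,h)$ fixing $\partial N$ pointwise. Since the underlying metric is simple, such a transformation must be the identity (by conformal rigidity when $\dim N\geq 3$, and by rigidity of boundary-fixing holomorphic maps when $\dim N=2$); hence $f_1=f_2=:f$ and $\mu_1=\mu_2=:\mu$. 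Equating the two versions of \eqref{eq:ssm_mrho_equiv_intro}, the common term $\mu/f^*\lambda$ cancels and leaves
\begin{equation*}
(1-\mu)\,m^2\left(\frac{1}{\rho_1^2}-\frac{1}{\rho_2^2}\right)=0,
\end{equation*}
so $\mu\equiv 1$ because $m\neq 0$ and $\rho_1\neq\rho_2$. Here the only subtle point, beyond elementary algebra, is establishing this $\rho$-independence of the conformal gauge $(f,\mu)$ via the uniqueness of boundary-fixing conformal maps on simple manifolds.
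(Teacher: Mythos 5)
Your single-$\rho$ argument reproduces the paper's proof: the Routhian computation is the Lagrangian counterpart of the Marsden--Weinstein reduction carried out in Lemma \ref{lemma:sym_red} and Theorem \ref{thm:rel_lorentz_mp} (your conserved momentum, mass constraint, and the identification of the reduced system with $(N,h,\rho\,d\omega,\rho^{2}/(-2\lambda))$ at energy $-m^{2}/2$ all match), the transfer of scattering data is Theorem \ref{thm:equivalence}, and the unwinding of the $k$-gauge relations at $k=-m^{2}/2$ into \eqref{eq:ssm_mrho_equiv_intro} is exactly the paper's application of Theorem \ref{thm:MP_rig} (scattering version, which is where the boundary hypotheses enter, as you note).

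For the final claim $\mu=1$ you take a genuinely different route, and this is where your proposal has a gap. The paper never compares the two gauges directly: via Lemmas \ref{lemma:merging} and \ref{lemma:simple_equiv} it rescales both momenta into the single $\rho$-independent system $(N,h,d\omega,1/(-2\lambda))$, so that the two momenta become two \emph{energy levels} $-m^{2}/(2\rho_{i}^{2})$; gauge equivalence at each level gives (by \cite{mt23}*{Lemma 4.4}) equality of the scattering relations of the rescaled systems at two energies, and the two-energy theorem of Assylbekov--Zhou (Theorem \ref{thm:MP_rig_2energy}) then outputs gauge equivalence with $\mu=1$ wholesale. Your route instead hinges on the lemma that $\phi=f_{1}\circ f_{2}^{-1}$, a conformal diffeomorphism of $(N,h)$ fixing $\partial N$ pointwise, must be the identity; you assert this (``conformal rigidity'', ``boundary-fixing holomorphic maps'') without proof, and since it is the crux of your argument it cannot be left as a citation-free appeal --- note in particular that Liouville's theorem only covers the conformally flat case in $\dim N\geq 3$. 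The lemma is in fact true and your proof can be completed: fixing $\partial N$ pointwise forces $d\phi=\mathrm{id}$ along the boundary and conformal factor $1$ there (in particular $\phi$ is orientation-preserving, since $d\phi$ maps the inward normal to an inward-pointing vector); differentiating $\phi^{*}h=c\,h$ normally at the boundary shows the full $2$-jet of $\phi$ is that of the identity at each boundary point, and one then needs the fact that in $\dim N\geq 3$ a conformal transformation of a connected manifold is determined by its $2$-jet at one point (conformal Cartan geometry/Lelong-Ferrand-type unique continuation), while in $\dim N=2$ holomorphicity plus the boundary identity theorem suffices. (In case (1) of the theorem this is unnecessary, since Theorem \ref{thm:MP_rig} already gives $f=\mathrm{id}$.) Finally, a caveat your algebra shares with the paper: your cancellation requires $\rho_{1}^{2}\neq\rho_{2}^{2}$, not merely $\rho_{1}\neq\rho_{2}$ --- the choice $\rho_{2}=-\rho_{1}$ is admissible under \eqref{eq:bound_rho} and defeats the last display --- and the paper's rescaling implicitly needs the same, since then $-m^{2}/(2\rho_{1}^{2})=-m^{2}/(2\rho_{2}^{2})$ is a single energy level.
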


\begin{thm} \label{thm:intro2}
There exists $m$ big enough such that for every $(h_0, \alpha_0, U_{0}) \in \mathcal{G}^m$ (introduced in Theorem \ref{thm:MP_gen}), there is $\eps>0$ such that for any two metrics $g=g_{h,\omega,\lambda}$ and $g'=g_{h',\omega',\lambda'}$ on $M=\R \times N$ with
    \begin{align*}
        \|h-h_0\|_{C^m(N)}+\|\omega-\omega_{0}\|_{C^m(N)}+\|\lambda-\lambda_{0}\|_{C^{m}(N)} &\leq \eps, \\
        \|h'-h_0\|_{C^m(N)}+\|\omega'-\omega_{0}\|_{C^m(N)}+\|\lambda'-\lambda_{0}\|_{C^{m}(N)} &\leq \eps,
    \end{align*}
    and $h|_{\partial M}=h'|_{\partial M}$, $i^* \omega=i^* \omega'$, $\lambda|_{\partial M}=\lambda'|_{\partial M}$, where $i$ is the embedding $i \colon \partial M \to M$, we have the following: if $\mathcal{S}_{\rho,m}=\mathcal{S}_{\rho,m}'$, then $g$ and $g'$ are related as in \eqref{eq:ssm_mrho_equiv_intro}. Furthermore, if the same hypothesis hold for two values of $\rho$, then the same conclusions as Theorem \ref{thm:intro1} are valid, i.e., \eqref{eq:ssm_mrho_equiv_intro} with $\mu=1$.
\end{thm}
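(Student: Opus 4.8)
The plan is to obtain Theorem~\ref{thm:intro2} by running the argument of Theorem~\ref{thm:intro1} almost verbatim, replacing only its rigidity input for the reduced $\MP$-systems: instead of the global rigidity coming from simplicity together with one of the conditions (1)--(3), I would invoke the generic local rigidity of Theorem~\ref{thm:MP_gen}. Recall that the Hamiltonian reduction assigns to a standard stationary metric $g=g_{h,\omega,\lambda}$ the $\MP$-system $(N,h,\rho\,d\omega,\rho^{2}/(-2\lambda))$ at momentum $\rho$, and identifies the timelike scattering relation $\mathcal{S}_{\rho,m}$ with the scattering data of this reduced system. Thus $\mathcal{S}_{\rho,m}=\mathcal{S}_{\rho,m}'$ says exactly that the reduced $\MP$-systems of $g$ and $g'$ have equal scattering data, and any gauge equivalence between those two reduced systems translates, as in the proof of Theorem~\ref{thm:intro1}, into the relations \eqref{eq:ssm_mrho_equiv_intro}. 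The boundary normalizations (that $f$ fix $\partial N$, that $\mu|_{\partial M}=1$, and that $\varphi$ vanish on the boundary) are inherited from the hypotheses $h|_{\partial M}=h'|_{\partial M}$, $i^{*}\omega=i^{*}\omega'$, $\lambda|_{\partial M}=\lambda'|_{\partial M}$.

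First I would realize the given generic system at the level of data: for $(h_{0},\alpha_{0},U_{0})\in\mathcal{G}^{m}$ pick $(\omega_{0},\lambda_{0})$ with $\rho\,d\omega_{0}=\alpha_{0}$ and $\rho^{2}/(-2\lambda_{0})=U_{0}$ (possible since $\alpha_{0}$ is exact and $U_{0}$ is bounded away from $0$), so that $(h_{0},\omega_{0},\lambda_{0})$ is standard stationary data whose reduced system is precisely $(h_{0},\alpha_{0},U_{0})$. Next I would check that the assignment $(h,\omega,\lambda)\mapsto(h,\rho\,d\omega,\rho^{2}/(-2\lambda))$ is continuous into the topology used to define the neighborhoods in Theorem~\ref{thm:MP_gen}; the two nontrivial points are the one-derivative loss in $\omega\mapsto d\omega$ and the smoothness of $\lambda\mapsto\rho^{2}/(-2\lambda)$ near $\lambda_{0}$. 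Hence, after shrinking the radius $\eps$, the $C^{m}$ hypotheses on $g$ and $g'$ force their reduced $\MP$-systems into the neighborhood of $(h_{0},\alpha_{0},U_{0})$ demanded by Theorem~\ref{thm:MP_gen}; in particular both reduced systems are simple, since simplicity is an open condition.

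Now Theorem~\ref{thm:MP_gen} applies: two simple $\MP$-systems close to the generic rigid system $(h_{0},\alpha_{0},U_{0})$ and carrying the same scattering data are gauge equivalent. Transporting this equivalence back through the reduction exactly as in Theorem~\ref{thm:intro1} produces the diffeomorphism $f$, the positive factor $\mu$, and the function $\varphi$ satisfying \eqref{eq:ssm_mrho_equiv_intro}. For the final assertion I would repeat the argument at two distinct momenta $\rho_{1}\neq\rho_{2}$ and subtract the two resulting instances of the third relation in \eqref{eq:ssm_mrho_equiv_intro}; their explicit $\rho$-dependence is inconsistent unless $\mu\equiv 1$, which is the two-momenta rigidity already used in Theorem~\ref{thm:intro1}.

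I expect the only genuine obstacle to be the bookkeeping in the second step. Because the reduction differentiates $\omega$, the $\MP$-neighborhood controlled by Theorem~\ref{thm:MP_gen} pulls back to a neighborhood in standard stationary data only after absorbing this derivative loss, and matching the two --- together with the regularity threshold needed for the generic $\MP$-rigidity to hold --- is precisely what forces the ``$m$ big enough'' in the statement. Everything else is a transcription of the reduction and of the proof of Theorem~\ref{thm:intro1}.
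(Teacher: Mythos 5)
Your single-momentum argument is exactly the paper's: the paper proves Theorem \ref{thm:intro2} in one line, by repeating the proof of Theorem \ref{thm:intro1} with Theorem \ref{thm:MP_gen} substituted for Theorem \ref{thm:MP_rig} (equality of $\mathcal{S}_{\rho,m}$ gives equality of the reduced $\MP$ scattering relations via Theorem \ref{thm:equivalence}, then the generic local rigidity gives $-m^{2}/2$-gauge equivalence, which unpacks to \eqref{eq:ssm_mrho_equiv_intro}). Your preliminary bookkeeping is a sensible fleshing-out of what the paper leaves implicit, with one small slip: the realization of $(h_{0},\alpha_{0},U_{0})$ as stationary data should read $\rho\,\omega_{0}=\alpha_{0}$ (a $1$-form identity, not $\rho\,d\omega_{0}=\alpha_{0}$, which mismatches degrees; no exactness of $\alpha_{0}$ is needed since $\alpha_{0}$ is already the potential), and $\lambda_{0}=\rho^{2}/(-2U_{0})>0$ requires $U_{0}$ strictly negative, not merely bounded away from zero.

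The genuine gap is in your two-momenta step. Running the argument at $\rho_{1}\neq\rho_{2}$ produces two gauge triples $(f_{1},\mu_{1},\varphi_{1})$ and $(f_{2},\mu_{2},\varphi_{2})$, and nothing guarantees $f_{1}=f_{2}$ or $\mu_{1}=\mu_{2}$; without that, the two instances of the third relation in \eqref{eq:ssm_mrho_equiv_intro} involve different pullbacks $f_{i}^{*}\lambda$ and cannot be subtracted. (Your computation, which yields $(1-\mu)m^{2}(\rho_{1}^{-2}-\rho_{2}^{-2})=0$, is valid only when the diffeomorphism and conformal factor are common to both momenta, and it also silently needs $\rho_{1}^{2}\neq\rho_{2}^{2}$.) The paper instead replaces Theorem \ref{thm:MP_rig_2energy} by Theorem \ref{thm:MP_gen_2e}: Lemmas \ref{lemma:merging} and \ref{lemma:simple_equiv} first convert the two fixed-energy statements at momenta $\rho_{i}$ into statements about the single $\rho$-independent system $(N,h,d\omega,1/(-2\lambda))$ at the two energies $-m^{2}/(2\rho_{i}^{2})$; then \cite{mt23}*{Lemma 4.4} upgrades the gauge equivalences to equality of scattering relations at those two energies; and the proof of Theorem \ref{thm:MP_gen_2e} resolves precisely the varying-$f$ issue by comparing $(h',\alpha',U')$ with $(f^{*}h,\alpha',f^{*}U)$ --- these are conformally related and share scattering data at two energies --- so that the conformal case of Theorem \ref{thm:MP_rig_2energy}, in which the diffeomorphism is forced to be the identity, applies and yields $\mu=1$. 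In short, your subtraction idea is the correct mechanism, but it is only legitimate after this reduction to the conformal case; as written, your final step does not go through.
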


We also show that one cannot obtain a strong relation than \eqref{eq:ssm_mrho_equiv_intro} with $\mu=1$ even if one assumes $\mathcal{S}_{\rho,m}=\mathcal{S}_{\rho,m}'$ for all $\rho$ (see Remark \ref{rmk:sharp}).

To prove the results, we relate the manifold $(M,g)$ to $\MP$-system by using the theory of reduction of Hamiltonian systems with symmetry. Then, we obtain a relation between the scattering relation of the manifold and the scattering relation of the $\MP$-system obtained in the previous step. Finally, we apply results from \cites{az, mt23, mt24}.

Let us finish the introduction by mentioning the organization of the paper. In Section \ref{sec:geo_prelim} we fix notation, and give definitions of standard stationary manifolds (SSM) and of the scattering relation. In Section \ref{sec:ham_red} we proceed with the Hamiltonian reduction and we relate the SSM with $\MP$-systems. In Section \ref{sec:rel} we show how the geometry of SSM and $\MP$-systems are related. This geometric relations will appear as technical conditions in the hypothesis of the main theorems, in Section \ref{sec:thms}, where we prove Theorems \ref{thm:intro1}, \ref{thm:intro2}. Finally, we have the appendices. In the first one, we gave some facts about rigidity of $\MP$-system. In Appendix \ref{app:ham_red}, we briefly review the results that we use for the Hamiltonian reduction. Finally, in Appendix \ref{app:light_geo}, we show the relation between the geometry of magnetic systems and SSM, in order to give a geometric meaning to some hypothesis in \cite{plamen}.
\\
\textbf{Acknowledgments.} The author would like to thank Plamen Stefanov for suggesting the problem, for helpful discussions and comments on a previous version of this manuscript. The author thanks Gabriel Paternain too, who suggested the symmetry point of view. Also, the author would like to thank Universidad de Chile for its hospitality during his visit. The author was partly supported by NSF Grant DMS-2154489.


\section{Geometric preliminaries} \label{sec:geo_prelim}

A connected Lorentzian manifold $(M, g)$ is called \emph{stationary spacetime} if it admits a timelike Killing vector field. In this note we will work with \emph{standard stationary spacetime}, that is, manifolds of the form $M=\R \times N$ with the following metric
\begin{equation} \label{eq:metric1}
    g_{ij}=-\lambda(x) dt^2+2 \tilde{\omega}_{i}(x) dt dx^{i}+\tilde{h}_{ij}(x)dx^{i}dx^{j},
\end{equation}
or, in matrix form, 
\begin{equation} \label{eq:metric_matrix}
    g=\begin{pmatrix}
    -\lambda & \tilde{\omega}_{1} & \cdots & \tilde{\omega}_{n} \\
    \tilde{\omega}_{1} & \tilde{h}_{11} & \cdots & \tilde{h}_{1n} \\
    \vdots & \vdots & \ddots & \vdots \\
    \tilde{\omega}_{n} & \tilde{h}_{n1} & \ddots & \tilde{h}_{nn}
\end{pmatrix}.
\end{equation}
where $\lambda \in C^{\infty}(N)$ is strictly positive, $\tilde{\omega}$ is a 1-form on $N$, and $\tilde{h}$ a Riemannian metric on $N$. We will always consider $N$ to be a smooth compact manifold with boundary.

Every stationary manifold is locally standard stationary. See \cite{cfs}*{Theorem 2.3} for a criterion that characterizes when a globally hyperbolic stationary space time is standard. To study geodesics on $(M,g)$,  we define the following metric on $N$
\[ h_{ij}dx^{i}dx^{j}=\tilde{h}_{ij}dx^{i}dx^{j}+\frac{1}{\lambda(x)} \tilde{\omega}_{i}\tilde{\omega}_{j}dx^{i}dx^{j}, \]
and let
\[ \omega=\frac{1}{\lambda}\tilde{\omega}. \]
Hence, we can write
\begin{equation} \label{eq:metric2}
    g_{ij}=-\lambda(x)(dt-\omega_{i} dx^{i})^{2} +h_{ij}(x)dx^{i}dx^{j},
\end{equation}
and
\begin{equation} \label{eq:metric_mat}
    \begin{pmatrix}
    -\lambda & \lambda \omega_{1} & \cdots & \lambda \omega_{n} \\
    \lambda \omega_{1} & -\lambda \omega_{1} \omega_{1}+h_{11} & \cdots & -\lambda\omega_{1} \omega_{n}+h_{1n} \\ 
    \vdots & \vdots & \ddots & \vdots \\
    \lambda \omega_{n} & -\lambda \omega_{n} \omega_{1}+h_{n1} & \cdots & -\lambda\omega_{n} \omega_{n}+h_{nn} \\ 
\end{pmatrix}.
\end{equation}
For $g$ as in \eqref{eq:metric2}, we will write $g=g_{\lambda,\omega,h}$. Note that since $g$ does not depend on time, $\partial_{t}$ is a Killing field. Hence, for any geodesic $\gamma=(t,x)$ we have that $(\dot{\gamma},\partial_{t})$ is constant along $\gamma$ (see \cite{oniell}*{p. 252}). Explicitly, 
\begin{equation} \label{eq:const2}
    -\lambda(x)(\dot{t}-\langle \omega,\dot{x} \rangle)=\rho,
\end{equation}
for some $\rho \in \R$. 

Geodesics $\gamma=(t,x)$ of $(M,g)$ are, by definition, critical points of the energy functional
\[ \int_{0}^{T}
(-\lambda(x)(\dot{t}-\langle \omega,\dot{x}\rangle)^{2}+|\dot{x}|_{h}^{2})ds. \]
For $\rho \in \R$, let $Y \colon TN \to TN$ the Lorentz force induced by $\rho d\omega$. By a variational argument, A. Germinario proved the following (cf. \cite{masiello}*{equation (3.4)}, \cite{fs}*{Theorem 2}, \cite{plamen}*{page 13}):

\begin{thm}[\cite{germinario}*{Theorem 3}] \label{thm:geo_sta_mp}
    If $\gamma=(t,x)$ is a geodesic for $(M,g)$ if and only if $t$ satisfies \eqref{eq:const2} for some $\rho \in \R$ and $x$ solves
    \begin{equation} \label{eq:mp_sta}
        \nabla_{\dot{x}}\dot{x}=Y(\dot{x})-\nabla \left(\frac{\rho^{2}}{-2\lambda}\right),
    \end{equation}
    where $\nabla$ denotes the Levi-Civita connection of the metric $h$. Reciprocally, geodesics $\gamma=(t,x)$ for $g$ can be obtained as solutions of \eqref{eq:mp_sta} for some $\rho \in \R$ and $t$ satisfying \eqref{eq:const2}. 
\end{thm}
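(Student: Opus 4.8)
The plan is to derive both conclusions from the variational characterization of geodesics stated just before the theorem: $\gamma=(t,x)$ is a geodesic exactly when it is a critical point of $\int_0^T L\,ds$ with Lagrangian $L=-\lambda(x)(\dot t-\langle\omega,\dot x\rangle)^2+|\dot x|_h^2$ read off from \eqref{eq:metric2}. Writing $\tau:=\dot t-\langle\omega,\dot x\rangle$, the decisive structural feature is that $L$ depends on $t$ only through $\dot t$. Hence the Euler--Lagrange equation in the $t$-variable is $\tfrac{d}{ds}\partial_{\dot t}L=\partial_t L=0$, i.e. $\partial_{\dot t}L=-2\lambda\tau$ is constant along $\gamma$; absorbing the factor $-2$ into the constant gives exactly \eqref{eq:const2} with $\rho:=-\lambda\tau$. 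This is the conservation law attached to the Killing field $\partial_t$ noted before the statement, so the first assertion is immediate.

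For the second equation I would compute the spatial Euler--Lagrange equations $\tfrac{d}{ds}\partial_{\dot x^k}L-\partial_{x^k}L=0$. A direct calculation gives $\partial_{\dot x^k}L=2\lambda\tau\omega_k+2h_{kj}\dot x^j$ and $\partial_{x^k}L=-(\partial_k\lambda)\tau^2+2\lambda\tau(\partial_k\omega_i)\dot x^i+(\partial_k h_{ij})\dot x^i\dot x^j$. The purely metric terms $\tfrac{d}{ds}(2h_{kj}\dot x^j)-(\partial_k h_{ij})\dot x^i\dot x^j$ assemble into $2h_{kl}(\nabla_{\dot x}\dot x)^l$, where $\nabla$ is the Levi-Civita connection of $h$. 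The crucial step is to insert the conservation law $\lambda\tau=-\rho$: it makes $\tfrac{d}{ds}(2\lambda\tau\omega_k)=-2\rho(\partial_i\omega_k)\dot x^i$, which together with the $\omega$-contribution $-2\lambda\tau(\partial_k\omega_i)\dot x^i=2\rho(\partial_k\omega_i)\dot x^i$ collapses to the antisymmetric expression $2\rho(\partial_k\omega_i-\partial_i\omega_k)\dot x^i=2\rho(d\omega)_{ki}\dot x^i$; by the defining relation $h(Y(u),v)=\rho\,d\omega(u,v)$ of the Lorentz force this equals $-2h_{kl}(Y(\dot x))^l$. Likewise $\tau^2=\rho^2/\lambda^2$ turns $(\partial_k\lambda)\tau^2$ into $2\,\partial_k(\rho^2/(-2\lambda))=2h_{kl}(\nabla(\rho^2/(-2\lambda)))^l$. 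Collecting the three groups, dividing by $2$ and raising the index with $h$ yields precisely \eqref{eq:mp_sta}.

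For the converse I would observe that every manipulation above is reversible. Given a curve $x$ on $N$ solving \eqref{eq:mp_sta} for a constant $\rho$, relation \eqref{eq:const2} determines $\dot t=\langle\omega,\dot x\rangle-\rho/\lambda(x)$, which integrates to $t$ up to an additive constant (reflecting the time-translation symmetry). Since $\lambda\tau=-\rho$ is then constant, the $t$-equation holds automatically, and running the computation of the previous paragraph backward shows that the resulting $\gamma=(t,x)$ also satisfies the spatial Euler--Lagrange equations; hence it is a critical point of the energy, i.e. a geodesic.

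The main obstacle is bookkeeping rather than conceptual: keeping the factors of $2$ straight and, above all, matching signs and index placements when identifying the antisymmetrized $\omega$-terms with $Y(\dot x)$ and the $\lambda$-derivative with $\nabla(\rho^2/(-2\lambda))$. The one genuinely substantive point is that the collapse of the magnetic and potential terms is valid \emph{only after} the $t$-conservation law has been imposed, so the two Euler--Lagrange equations must be used jointly; this coupling is exactly what ties the scalar $\rho$ of \eqref{eq:const2} to the force and potential appearing in \eqref{eq:mp_sta}.
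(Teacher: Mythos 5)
Your proof is correct, but it follows a genuinely different route from the paper's. You argue variationally: starting from the energy functional with Lagrangian $L=-\lambda(\dot t-\langle\omega,\dot x\rangle)^2+|\dot x|_h^2$, you read off \eqref{eq:const2} as the conserved momentum conjugate to the cyclic variable $t$, and then substitute $\lambda\tau=-\rho$ into the spatial Euler--Lagrange equations so that the $\omega$-terms antisymmetrize into the Lorentz force of $\rho\,d\omega$ and the $\lambda$-term becomes $\nabla(\rho^2/(-2\lambda))$; your bookkeeping (the factor collapses, the sign in $h(Y(u),v)=\rho\,d\omega(u,v)$ giving $2\rho(\partial_k\omega_i-\partial_i\omega_k)\dot x^i=-2h_{k\ell}(Y\dot x)^\ell$, and the reversibility of the argument for the converse) all checks out, and you correctly flag the one substantive point, namely that the collapse is only valid after the $t$-conservation law is imposed. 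This is essentially Germinario's original variational proof, which the paper cites but deliberately does not reproduce: the paper instead gives a new proof via symplectic reduction, exhibiting the time-translation action of $(\R,+)$ on $(TM,\eta)$, computing the $Ad^*$-equivariant momentum map $J=-\lambda(v_t-\langle\omega,v_x\rangle)\,dt|_{t=0}$, and applying the Marsden--Weinstein--Meyer theorem to identify the reduced space $J^{-1}(\rho)/\R\cong TN$ with the symplectic form \eqref{eq:eta_rho} and reduced Hamiltonian \eqref{eq:H_rho}, after which \eqref{eq:mp_sta} is read off from Hamilton's equations. Your approach is more elementary and self-contained, requiring only the Euler--Lagrange formalism; the paper's approach costs more machinery but buys strictly more, since it identifies not just the geodesic equations but the full reduced symplectic structure $(TN,\eta_\rho,H_\rho)$ of the $\MP$-system, which is what the paper later exploits (via Theorem \ref{thm:equivalence}) to transfer scattering data between $(M,g)$ and the $\MP$-system. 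As a minor caveat, your identification of $2\rho(d\omega)_{ki}\dot x^i$ with $-2h_{k\ell}(Y\dot x)^\ell$ depends on the convention $(d\omega)_{ij}=\partial_i\omega_j-\partial_j\omega_i$ without a factor $\tfrac12$; this is the convention consistent with the paper's Hamilton's-equations computation, but it is worth stating explicitly to keep the factor of two honest.
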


In Section \ref{sec:ham_red} we will give a new proof of this result which will also give a relation between $(M,g)$ and $\MP$-systems. 

Now we give the definition of the scattering relation. We will consider timelike geodesics with $|\dot{\gamma}|_{g}^{2}=-m^2$. In physics, this means that the particle described by the geodesic has mass $m$, see \cite{sw}*{Section 3.1}. 

Note that by \cite{rs}*{Proposition 2.1}, $(M, g)$ is geodesically complete (see also \cite{bcfs}*{Theorem 1.1}, \cite{cs}*{Theorem 4.27}). Since we will work locally, we assume that we have two ``small" timelike surfaces $U$ and $V$ corresponding to initial and endpoints, respectively. We fix $p=(t,x) \in U, q=(s,y) \in V$ so that they are connected by a timelike geodesic $[0,T] \ni s \to \gamma(s)$, with $|\dot{\gamma}|_{g}^{2}=-m^2$. Assume that
\begin{equation} \label{eq:nc} \tag{NC}
    p \text { and } q \text { are not conjugate along } \gamma
\end{equation}
and that $\gamma$ is transversal to $U$ and $V$ at their only points of intersection, $p$ and $q$. Condition \eqref{eq:nc} is satisfied globally if the sectional curvature of $(M,g)$ is non-negative on all timelike tangent planes, see \cite{oniell}*{p. 277}. 

It is known that standard stationary manifolds are time-orientable (see also \cite{oniell}*{p. 145}). We fix a time orientation on $U$ that we call future pointing. Assume that $\gamma$ is future pointing at $p$, and we choose a time orientation on $V$ so that $\gamma$ is future pointing at $q$ as well. We also fix orientation on $U$ and $V$ in the classical sense, calling the sides containing $\gamma$ interior, and the other ones exterior. Set $v=\dot{\gamma}(0)$ and $w=\dot{\gamma}(1)$. Let $v'$, $w'$ be their orthogonal projections on $T U$ and $T V$, respectively; see \cite{oniell}*{p. 50}. They must be timelike. Let $\mathcal{U}, \mathcal{V}$ be small timelike conic neighborhoods of $(p, v')$ in $T U$, and of $(q,w')$ in $T V$, respectively. We define the scattering relation that we will use in this work as follows.

\begin{defin} \label{defin:rho_sca}
    The \emph{scattering relation of momentum $\rho$ and mass $m$} $\mathcal{S}_{\rho,m} \colon \mathcal{U} \to \mathcal{V}$ given by $\mathcal{S}_{\rho,m}(p, v')=(q, w')$ is defined as follows. Let $v$ be the timelike vector of mass $m$ at $p$ with orthogonal projection $v'$ on $T_p U$, and pointing to the interior; then $q \in V$ is the point where the geodesic $\gamma_{p,v}$ with $(\partial_{t},\dot{\gamma}_{p,v})_{g}=\rho$, issued from $(p, v)$ meets $V$, and $w'$ is the orthogonal projection on $T_q V$ of its direction there.
\end{defin}

\begin{rmk} \hfill
\begin{enumerate}
    \item We recall that we work with the projections in order to obtain a scattering relation invariant by diffeomorphism fixing the boundary. Otherwise, we would have to impose conditions in order to preserve the (inner) normal vectors at the boundary, see also Remark \ref{rmk:sca_proj}.
    \item We use the word \emph{momentum} in Definition \ref{defin:rho_sca} because we are fixing a momentum map, see Section \ref{sec:ham_red} and Appendix \ref{app:ham_red}.
    \item There is a relation between the angle of a vector (in the same timecone as $\partial_{t}$) and the momentum $\rho$, see Section \ref{sec:ener_ang} for details.
\end{enumerate}    
\end{rmk}

\section{Hamiltonian reduction} \label{sec:ham_red}

Consider the symplectic manifold $(TM,\eta)$, where $\eta$ is the pullback to $TM$ of the canonical symplectic form on $T^{*}M$ by the Lorentzian metric $g$. Write $G=(\R,+)$, and consider the following action of $G$ as a Lie group into $M$:
\begin{align*}
    \phi:G \times M & \to M, \\
    (s,(t,x)) & \mapsto(s+t,x).
\end{align*}
We lift the action of $G$ by $\phi$ to the tangent bundle of $M$, that is,
\begin{align*}
    \Phi \colon G \times TM & \to TM, \\
    (s,(t,x,v)) & \mapsto T\phi_{s}(t,x,v):=(\phi_{s}(t,x),(d\phi_{s})_{t,x}(v)).
\end{align*}
Since $\phi_{s}$ acts by isometries, in light of \cite{am}*{Corollary 3.2.14} we conclude that $\Phi$ is a symplectic action on $(TM,\eta)$. 

Consider the Lie algebra of $G$
\[ \mathfrak{g}=T_{0}G=\R \frac{\partial}{\partial t}\bigg|_{t=0}, \]
and its dual, given by
\[ \mathfrak{g}^{*}=T_{0}^{*}G=\R dt|_{t=0}. \]

Let $\xi=\xi \frac{\partial}{\partial t}|_{t=0} \in \mathfrak{g}$. Then, $s\xi$ is the integral curve of $\xi$ that passes through $0$ at time $0$. $\xi$ and $\Phi$ induce an $\R$-action on $TM$ on the following way:
\begin{align*}
    \R \times TM &\to TM, \\
    (s,(t,x,v)) & \mapsto \Phi(s\xi,(t,x,v)).
\end{align*}
The vector field on $TM$ given by 
\[ \xi_{TM}(t,x,v)=\frac{d}{ds}\bigg|_{s=0}\Phi(s\xi,(t,x,v))=\xi \frac{\partial}{\partial t}\bigg|_{t=0}, \]
is the \emph{infinitesimal generator} of the action corresponding to $\xi$ \cite{am}*{Definition 4.1.24}.

Now we are ready to find a momentum map (see Definition \ref{def:mo}). 

\begin{lemma}
An $Ad^{*}$-equivariant momentum map for the action $\Phi$ of $G$ in $(TN,\eta)$ is 
\[J(t,x,v)=-\lambda(x)(v_{t}-\langle \omega(x),v_{x} \rangle)dt|_{t=0}.\]
\end{lemma}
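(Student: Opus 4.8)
The plan is to compute the momentum map directly from its defining property and then verify equivariance, which is automatic here because $G$ is abelian. Let me lay out what a momentum map is in this setting. A momentum map $J \colon TM \to \mathfrak{g}^*$ for a symplectic action must satisfy, for every $\xi \in \mathfrak{g}$, that the function $J^\xi := \langle J, \xi \rangle$ is a Hamiltonian for the infinitesimal generator $\xi_{TM}$, i.e. $dJ^\xi = \iota_{\xi_{TM}} \eta$. So the strategy is: (i) identify the symplectic form $\eta$ explicitly in coordinates on $TM$; (ii) read off the infinitesimal generator $\xi_{TM} = \xi \,\partial_t$; (iii) solve $dJ^\xi = \iota_{\xi_{TM}} \eta$ for $J^\xi$; (iv) check that the proposed $J$ is the correct solution; and (v) confirm $\mathrm{Ad}^*$-equivariance.

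**The key computation.**

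First I would write down $\eta$. Since $\eta$ is the pullback to $TM$ of the canonical symplectic form on $T^*M$ under the bundle isomorphism $TM \to T^*M$, $\,v \mapsto g(v, \cdot)$, it is exact: $\eta = -d\theta_g$, where $\theta_g$ is the pullback of the tautological one-form. In coordinates $(t, x^i, v_t, v_x^i)$ on $TM$, the canonical one-form pulls back to $\theta_g = g_{\alpha\beta} v^\beta \, dq^\alpha$ (summing over all spacetime indices, with $q^0 = t$). The component of $\theta_g$ along $dt$ is precisely $g_{0\beta} v^\beta = -\lambda(v_t - \langle \omega, v_x \rangle)$, using the metric form \eqref{eq:metric2}. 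Next, contracting $\eta = -d\theta_g$ with the infinitesimal generator $\xi_{TM} = \xi\, \partial_t$: because the metric coefficients are $t$-independent (the defining property of a standard stationary manifold), the Lie derivative $\mathcal{L}_{\xi_{TM}} \theta_g = 0$, so by Cartan's formula $\iota_{\xi_{TM}} \eta = -\iota_{\xi_{TM}} d\theta_g = d(\iota_{\xi_{TM}} \theta_g)$. But $\iota_{\xi_{TM}} \theta_g = \xi \cdot \theta_g(\partial_t) = \xi \cdot g_{0\beta} v^\beta = -\xi \lambda (v_t - \langle \omega, v_x \rangle)$. Hence $dJ^\xi = \iota_{\xi_{TM}}\eta = d\!\left( -\xi \lambda(v_t - \langle \omega, v_x\rangle) \right)$, giving $J^\xi = -\xi \lambda(v_t - \langle \omega, v_x\rangle)$ up to a constant, which we fix to zero. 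Identifying $\xi \leftrightarrow \xi \frac{\partial}{\partial t}|_{t=0}$ and pairing against $dt|_{t=0}$ yields exactly the claimed formula for $J$.

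**Equivariance and the main obstacle.**

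For $\mathrm{Ad}^*$-equivariance, since $G = (\R, +)$ is abelian, the coadjoint action is trivial, so equivariance reduces to showing $J$ is invariant under $\Phi$, i.e. $J \circ \Phi_s = J$ for all $s$. This is immediate from the explicit formula: $\Phi_s$ translates $t \mapsto t + s$ and acts as the identity on the fiber coordinates $(v_t, v_x)$ and on $x$ (since $d\phi_s$ is the identity in these adapted coordinates), while $\lambda, \omega$ depend only on $x$; therefore $J \circ \Phi_s = J$. I would invoke \cite{am} for the standard fact that for an abelian group an invariant momentum map is automatically $\mathrm{Ad}^*$-equivariant.

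I expect the main obstacle to be purely bookkeeping: getting the pullback symplectic form and the tautological one-form expressed correctly in the $(t, x, v_t, v_x)$ coordinates, and in particular matching sign conventions so that the $\rho$ appearing in \eqref{eq:const2} agrees with $J$ evaluated on a geodesic's velocity. The conceptual content — that $J^\xi$ is just the conserved quantity $(\dot\gamma, \partial_t)_g$ from \eqref{eq:const2}, re-derived as a momentum map via $t$-translation invariance — is transparent; the risk lies entirely in the constant factors and signs, which I would pin down by comparing $\theta_g(\partial_t)$ against the left-hand side of \eqref{eq:const2}.
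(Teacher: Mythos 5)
Your proposal is correct, and it is consistent with the paper's conventions: with $\eta=-d\theta_g$ (which matches the coordinate expression for $\eta$ used later in the proof of Lemma \ref{lemma:sym_red}), Cartan's formula together with $\mathcal{L}_{\xi_{TM}}\theta_g=0$ gives $\iota_{\xi_{TM}}\eta=d(\iota_{\xi_{TM}}\theta_g)$, and $\theta_g(\partial_t)=g_{0\beta}v^{\beta}=-\lambda(v_t-\langle\omega,v_x\rangle)$ by \eqref{eq:metric_mat}, so your $J^{\xi}$ agrees with the claimed formula; equivariance reduces to invariance since $G$ is abelian, and invariance is immediate from $t$-independence of $\lambda,\omega,h$. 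The difference from the paper is that the paper proves this in one line by citing Lemma \ref{lemma:mo_tan} (Abraham--Marsden, Corollary 4.2.13), which states that for a lifted isometric action the momentum map is $\hat{J}(\xi)(v_q)=(v_q,\xi_Q(q))_g$ and is automatically $Ad^*$-equivariant, and then evaluates the pairing $(v,\xi\partial_t)_g$, which is the same final computation you perform. In effect you re-prove the cited black-box lemma in this special case: your route is self-contained and makes the sign conventions and the role of the Killing symmetry transparent (it also explains why $J$ coincides with the conserved quantity $\rho$ of \eqref{eq:const2}), at the cost of length; the paper's route is shorter and defers both the existence formula and the equivariance to the general result. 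One small point worth making explicit in your argument is that $\xi_{TM}=\xi\,\partial_t$ has no vertical component in the adapted coordinates (because $d\phi_s$ is the identity on fibers), which is what justifies both $\iota_{\xi_{TM}}\theta_g=\xi\,\theta_g(\partial_t)$ and $\mathcal{L}_{\xi_{TM}}\theta_g=0$; you use this implicitly and it is exactly where the standard-stationary structure enters.
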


\begin{proof}
We can apply Lemma \ref{lemma:mo_tan}, to obtain
\[ \hat{J}(\xi)(v)=((v_{t},v_{x}), \xi_Q(q))_{g}=-\xi\lambda(x)(v_{t}-\langle \omega(x),v_{x} \rangle). \]
Then, by definition (see Definition \ref{def:mo}), we find 
\[ J(t,x,v)=-\lambda(x)(v_{t}-\langle \omega(x),v_{x} \rangle)dt|_{t=0}. \]
\end{proof}

\begin{rmk} \label{rmk:timecone}
Note that if $(x,v) \in J^{-1}(\rho)$ and $v$ is timelike, we have the following three cases
\begin{itemize}
    \item If $\rho<0$, then $v$ and $\partial_{t}$ are in the same timecone and
    \begin{equation} \label{eq:v_t}
        v_{t}=-\frac{\rho}{\lambda}+\langle \omega,v_{x} \rangle.
    \end{equation}
    Furthermore, there is a unique $\varphi \geq 0$ (the \emph{hyperbolic angle}, see \cite{oniell}*{p. 144}) with
    \[ \rho=-\sqrt{\lambda} |v|_{g}\cosh\varphi. \]
    \item If $\rho=0$, then $v$ and $\partial_{t}$ are orthogonal and $v_{t}=\langle \omega,v_{x} \rangle$;
    \item If $\rho>0$, then $-v$ and $\partial_{t}$ are in the same timecone and \eqref{eq:v_t}. As before, there exists a unique $\varphi \geq 0$ with
    \[ \rho=\sqrt{\lambda} |v|_{g} \cosh \varphi. \]
\end{itemize}
\end{rmk}

The tuple $(TM,\eta,\Phi,J)$ is a \emph{Hamiltonian $G$-space} (see Definition \ref{def:mo}). We also consider the Hamiltonian
\begin{align*}
    H \colon TM &\to \R, \\
    (t,x,v) & \mapsto \frac{1}{2}|v|_{g(t,x)}^{2}.
\end{align*}

\begin{lemma} \label{lemma:sym_red}
The symplectic reduction of the Hamiltonian $G$-space $(TM,\eta,\Phi,J)$ endowed with the Hamiltonian $H$ is given by the manifold $TN$ 
endowed with the symplectic form 
\begin{equation} \label{eq:eta_rho}
    \eta_{\rho}=\delta^{\flat}+\pi^{*}(\rho d\omega),
\end{equation}
where $\delta^{\flat}$ is the pullback of the canonical 2-form on $T^{*}N$ $\delta$, to $TN$ via $h$, and $\pi \colon TN \to N$ is the base point projection. The reduced Hamiltonian is given by
\begin{equation} \label{eq:H_rho}
    H_{\rho}(x,v)=\frac{1}{2}|v_{x}|_{h(x)}^{2}+\frac{\rho^{2}}{-2\lambda(x)}.
\end{equation}
\end{lemma}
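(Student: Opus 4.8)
The plan is to apply the Marsden--Weinstein reduction theorem (recalled in Appendix~\ref{app:ham_red}) to the Hamiltonian $G$-space $(TM,\eta,\Phi,J)$ at the value $\rho\,dt|_{t=0}\in\mathfrak{g}^{*}$. First I would check the hypotheses. Since $dJ$ is nonzero in the $v_{t}$-direction (the coefficient being $-\lambda\neq 0$), every $\rho$ is a regular value, so $J^{-1}(\rho)$ is a smooth hypersurface; by Remark~\ref{rmk:timecone} it is the graph $\{v_{t}=-\rho/\lambda+\langle\omega,v_{x}\rangle\}$, hence coordinatized by $(t,x,v_{x})$. Because $G=(\R,+)$ is abelian, the coadjoint action is trivial and the isotropy group $G_{\rho}$ is all of $G$; moreover $G$ acts on $J^{-1}(\rho)$ by translation in $t$, which is free and proper. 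The orbit space $J^{-1}(\rho)/G$ is therefore a smooth symplectic manifold, and forgetting the $t$-coordinate identifies it with $TN$ via $(x,v_{x})$. This produces the underlying manifold claimed in the statement; it remains to compute the reduced form $\eta_{\rho}$ and Hamiltonian $H_{\rho}$.

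For the symplectic form, the reduction theorem characterizes $\eta_{\rho}$ by $\pi_{\rho}^{*}\eta_{\rho}=i_{\rho}^{*}\eta$, where $i_{\rho}\colon J^{-1}(\rho)\hookrightarrow TM$ and $\pi_{\rho}\colon J^{-1}(\rho)\to TN$ are the inclusion and quotient projection. The key step is to make $i_{\rho}^{*}\eta$ explicit. Writing $\theta_{M}$ for the tautological $1$-form on $T^{*}M$ and using the Abraham--Marsden sign convention $\Omega_{T^{*}M}=-d\theta_{M}$ of \cite{am}, we have $\eta=-d(g^{*}\theta_{M})$ with $g^{*}\theta_{M}=p_{t}\,dt+p_{i}\,dx^{i}$, where $p_{\mu}=g_{\mu\nu}v^{\nu}$. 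A direct computation from \eqref{eq:metric_mat} gives
\[ p_{t}=-\lambda(v_{t}-\langle\omega,v_{x}\rangle),\qquad p_{i}=-\omega_{i}\,p_{t}+h_{ij}v^{j}. \]
On $J^{-1}(\rho)$ one has $p_{t}\equiv\rho$, whence $i_{\rho}^{*}(g^{*}\theta_{M})=\rho\,dt-\rho\,\omega+h_{ij}v^{j}\,dx^{i}$. Applying $-d$ kills the closed term $\rho\,dt$ and yields
\[ i_{\rho}^{*}\eta=\rho\,d\omega-d\!\left(h_{ij}v^{j}\,dx^{i}\right)=\pi^{*}(\rho\,d\omega)+\delta^{\flat}, \]
since $h_{ij}v^{j}\,dx^{i}$ is precisely the pullback to $TN$ of the tautological form on $T^{*}N$ via $h$, and $\delta^{\flat}$ is $-d$ of the latter by definition. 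As this expression is independent of $t$, it descends through $\pi_{\rho}$ to the form $\eta_{\rho}=\delta^{\flat}+\pi^{*}(\rho\,d\omega)$ in \eqref{eq:eta_rho}.

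Finally, the reduced Hamiltonian is the function induced on $TN$ by the $G$-invariant function $H$ (note $g$, hence $H$, is independent of $t$). From \eqref{eq:metric2}, $2H=g(v,v)=-\lambda(v_{t}-\langle\omega,v_{x}\rangle)^{2}+|v_{x}|_{h}^{2}$; restricting to $J^{-1}(\rho)$, where $v_{t}-\langle\omega,v_{x}\rangle=-\rho/\lambda$, the first term becomes $-\rho^{2}/\lambda$, so $H|_{J^{-1}(\rho)}=\tfrac{1}{2}|v_{x}|_{h}^{2}+\rho^{2}/(-2\lambda)$, which is manifestly $t$-independent and hence equals $H_{\rho}$ as in \eqref{eq:H_rho}. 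I expect the only real obstacle to be bookkeeping: extracting the components $p_{\mu}$ correctly from \eqref{eq:metric_mat} and tracking the sign convention for the canonical form consistently between $T^{*}M$ and $T^{*}N$, so that the two contributions assemble into exactly $\delta^{\flat}+\pi^{*}(\rho\,d\omega)$ rather than its negative.
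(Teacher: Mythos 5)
Your proposal is correct, and it follows the same overall skeleton as the paper: verify that $\rho$ is a regular value (via the $-\lambda$ entry of $dJ$), note $G$ abelian so $G_{\rho}=G=\R$ acting freely and properly by $t$-translation, apply the Marsden--Weinstein theorem (Theorem~\ref{thm:mwm}) to identify $J^{-1}(\rho)/G$ with $TN$, and then Theorem~\ref{thm:ham_red} for $H_{\rho}$; your computation of $H|_{J^{-1}(\rho)}$ is identical to the paper's. Where you genuinely diverge is in how $i_{\rho}^{*}\eta$ is computed: the paper writes out the full coordinate expansion of the $2$-form $\eta$ on $TM$ from \eqref{eq:metric_mat} (a lengthy display) and then restricts, whereas you restrict the primitive first, using $\eta=-d(g^{*}\theta_{M})$ with $g^{*}\theta_{M}=p_{t}\,dt+p_{i}\,dx^{i}$, observing $p_{t}=J\equiv\rho$ on $J^{-1}(\rho)$ and $p_{i}=-\omega_{i}p_{t}+h_{ij}v^{j}$, so that $-d$ kills the closed term $\rho\,dt$ and the answer $\delta^{\flat}+\pi^{*}(\rho\,d\omega)$ drops out in two lines. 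This buys transparency: it makes clear \emph{why} all $dt$-terms disappear (the restricted $dt$-component is the constant $\rho$), it keeps the sign bookkeeping to the single convention $\Omega=-d\theta$ (which you can confirm is the paper's convention by comparing with its displayed coordinate formula for $\eta$, whose term $g_{ij}\,dx^{i}\wedge dv^{j}$ matches $-d(g_{ij}v^{j}dx^{i})$), and it exhibits the reduced form as manifestly basic, so it descends through $\pi_{\rho}$ without further argument. The paper's brute-force route, by contrast, produces the explicit expansion of $\eta$ that it reuses later when deriving the Hamilton equations in Theorem~\ref{thm:rel_lorentz_mp}, which your shortcut would not supply. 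One trivial nit: the graph description $v_{t}=-\rho/\lambda+\langle\omega,v_{x}\rangle$ of $J^{-1}(\rho)$ follows directly from the definition of $J$ for arbitrary $v$, not just the timelike case treated in Remark~\ref{rmk:timecone}, so you should cite the definition rather than that remark.
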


\begin{proof}
The idea is to apply Theorem \ref{thm:mwm}.
\begin{itemize}
    \item \textbf{Step 1:}  The setting is the following. We consider the Hamiltonian $G$-space $(TM,\eta,\Phi,J)$. Recall that the action is symplectic, and $J$ is $Ad^{*}$-equivariant. Note that one of the entries of the differential of $J$ is $-\lambda$. By hypothesis, $\lambda>0$. Hence, all the points on $TM$ are regular points of $J$. Take $\rho=\rho dt|_{t=0}$ to be one of them. Then,
    \[ J^{-1}(\rho)=\{(t,x,v) \in TM:-\lambda(x)(v_{t}-\langle \omega(x),v_{x} \rangle)=\rho\}. \]
    is a submanifold of $TM$ of codimension $1$. Let $G_{\rho}$ denote the isotropy group of $G$ under the co-adjoint action $Ad^{*}$, that is,
    \[ G_{\rho}=\{g \in G:Ad_{g^{-1}}^{*}\rho=\rho\}. \]
    Observe that $G$ is commutative, then $Ad_{g}^{*}=id_{\mathfrak{g}^{*}}$. Therefore, $G_{\rho}=G=\R$.
    \item \textbf{Step 2:} Now, we need to show that $G$ acts freely and properly on $J^{-1}(\rho)$. 
    
    To see that $G$ acts freely, we need to check that for each $(t,x,v) \in TM$, $s \mapsto \Phi_{s}(t,x,v)$ is injective. This is clear. Indeed, looking at the first entry we find that if $\Phi_{s}(t,x,v)=\Phi_{s'}(t,x,v)$, then $s=s'$.
    
    To see that $G$ acts properly on $TM$, we need to prove that 
    \begin{align*}
        \Tilde{\Phi} \colon G \times J^{-1}(\rho) & \to J^{-1}(\rho) \times J^{-1}(\rho),\\
        (s,(t,x,v)) & \mapsto ((t,x,v),\Phi(s,(t,x,v))),
    \end{align*}
    is a proper map. Let $K=K_{1} \times K_{2} \subset J^{-1}(\rho) \times J^{-1}(\rho)$ be a compact set. Since $\Phi$ only affects the time component, we see that 
        \[ \Tilde{\Phi}^{-1}(K)=\{s \in G, \,(t,x,v) \in J^{-1}(\rho):(t,x,v) \in K_{1},\Phi_{s}(t,x,v) \in K_{2}\}, \]
    is compact. 
    \item \textbf{Step 3:} By the previous steps, we can invoke Theorem \ref{thm:mwm} to obtain that the reduced manifold is
    \begin{align*}
        M_{\rho}&=J^{-1}(\rho)/G \\
        &=\left\{ \left( t,x, -\frac{\rho}{\lambda}+\langle \omega,v_{x}\rangle, v_{x} \right):(x,v) \in TM \right\}/\R \\
        &=TN,
    \end{align*}
    and is endowed with the symplectic form $\eta_{\rho}$ satisfying 
    \[ \pi_{\rho}^{*}\eta_{\rho}=i_{\rho}^{*}\eta. \]
    where $\pi_\rho \colon J^{-1}(\rho) \to TN$ is the canonical projection and $i_\rho: J^{-1}(\rho) \to TM$ is the inclusion. To find $\eta_{\rho}$, first we observe that in local coordinates, $\eta$ is given by
    \[\eta=\sum_{i,j=0}^{n} \left \lbrace  \left(\sum_{k=0}^{n} v^{k} \partial_{x^{j}}g_{ik}  \right) dx^{i} \wedge dx^{j}+g_{ij}dx^{i}\wedge dv^{j}  \right \rbrace.\]
    Here $x^{0}=t$ and $v^{0}=v_{t}$. Using \eqref{eq:metric_mat}, we see that 
    \begin{align*}
        \eta=&-\lambda dt \wedge dv^{0}-\sum_{j=1}^{n}v^{0}\partial_{x^{j}} \lambda dt \wedge dx^{j}+\sum_{j,k=1}^{n}v_{x}^{k}\partial_{x^{j}}(\lambda \omega_{k}) dt\wedge dx^{j} \\
        &+\sum_{j=1}^{n} \lambda \omega_{j}dt \wedge dv_{x}^{j}+ \sum_{i=1}^{n} \lambda \omega_{i}dx^{i} \wedge dv^{0}+\sum_{i,j=1}^{n}v^{0}\partial_{x^{j}}(\lambda \omega_{i})dx^{i}\wedge dx^{j} \\
        &-\sum_{i,j,k=1}^{n} v_{x}^{k}\partial_{x^{j}}(\lambda \omega_{i} \omega_{k})dx^{i} \wedge dx^{j}+\sum_{i,j,k=1}^{n}v_{x}^{k}\partial_{x^{j}}h_{ik}dx^{i} \wedge dx^{j} \\
        &- \sum_{i,j=1}^{n} \lambda \omega_{i}\omega_{j} dx^{i} \wedge dv_{x}^{j} +\sum_{i,j=1}^{n}h_{ij}dx^{i} \wedge dv_{x}^{j}.
    \end{align*}
    Using this and that $J \equiv \mu$, is nor hard to see then that $\eta_{\rho}$ is as in \eqref{eq:eta_rho}. Finally, since $G$ acts by isometries, we have that $H$ is invariant by the action of $G$. Then, by Theorem \ref{thm:ham_red}, there is a reduced Hamiltonian $H_{\rho}$ on $M_{\rho}$ satisfying
    \[ H_{\rho} \circ \pi_{\rho}=H \circ i_{\rho}. \]
    Using $J \equiv \mu$, we obtain that $H_{\rho}$ has the form \eqref{eq:H_rho}.
\end{itemize}
\end{proof}

Now we give the relation between stationary manifolds and $\MP$-systems, which generalizes and gives a new proof of \cite{germinario}*{Theorem 3}.

\begin{thm} \label{thm:rel_lorentz_mp}
    Let $N$ be a smooth manifold with boundary, and write $M=\R \times N$. Consider the standard stationary Lorentz manifold $(M,g)$ so that $g$ is as in \eqref{eq:metric2}. 
    \begin{enumerate}
        \item The projection of every trajectory of the geodesic flow on $(TM,g)$ corresponds to a trajectory of the Hamiltonian flow on $(M_{\rho},\eta_{\rho},H_{\rho})$ for some $\rho \in \R$. Reciprocally, given a trajectory $(x,v_{x})$ of the Hamiltonian flow on $(TN,\eta_{\rho},H_{\rho})$, one can find a trajectory of the flow on $(TM,\eta,H)$ whose projection to $TN$ is $(x,v_{x})$.
        \item The projection of every geodesic in $(M,g)$ corresponds an $\MP$-geodesic on the $\MP$-system $(N,h,d\rho \omega,\rho^{2}/(-2\lambda))$, where $\rho \in \R$. Reciprocally, given an $\MP$-geodesic $x$ of the $\MP$-system $(N,h,d\rho \omega,\rho^{2}/(-2\lambda))$, one can find a geodesic on $(M,g)$ whose projection to $N$ is $x$. In particular, timelike geodesic with $|\gamma|_{g}^{2}=-m^2$ corresponds to $\MP$-geodesics of energy $-m^{2}/2$. Furthermore, there are not timelike geodesics orthogonal to $\partial_{t}$.
    \end{enumerate}
\end{thm}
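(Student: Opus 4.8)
The plan is to read part (1) off directly from the Hamiltonian reduction established in Lemma \ref{lemma:sym_red}, then to identify the reduced system with the asserted $\MP$-system in part (2), obtaining the energy and orthogonality assertions as immediate corollaries. Recall that the geodesic flow on $(M,g)$ is the Hamiltonian flow of $H(t,x,v)=\frac{1}{2}|v|_{g}^{2}$ on $(TM,\eta)$. Since $G$ acts by isometries, $H$ is $G$-invariant, so the momentum $J$ is conserved along trajectories (this is exactly \eqref{eq:const2}); hence every geodesic trajectory lies in a single level set $J^{-1}(\rho)$.

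For part (1), by Lemma \ref{lemma:sym_red} together with the reduction-of-dynamics statement (Theorem \ref{thm:ham_red}), the Hamiltonian vector field $X_{H}$ is tangent to $J^{-1}(\rho)$ and is $\pi_{\rho}$-related to $X_{H_{\rho}}$, where $\pi_{\rho}\colon J^{-1}(\rho)\to TN$ is the surjective submersive quotient projection. This gives the forward direction at once: the projection of a geodesic trajectory is a trajectory of $X_{H_{\rho}}$ on $(TN,\eta_{\rho},H_{\rho})$. For the reciprocal direction, given a trajectory $(x,v_{x})$ of $X_{H_{\rho}}$, I would choose any point in the fiber $\pi_{\rho}^{-1}(x(0),v_{x}(0))$ and flow it by $X_{H}$; because the two fields are $\pi_{\rho}$-related, the resulting $H$-trajectory remains in $J^{-1}(\rho)$ and projects onto $(x,v_{x})$.

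For part (2), I would recognize $(TN,\eta_{\rho},H_{\rho})$ as the Hamiltonian formulation of an $\MP$-system. By \eqref{eq:eta_rho} the symplectic form is the canonical form $\delta^{\flat}$ twisted by the closed $2$-form $\pi^{*}(\rho\,d\omega)=\pi^{*}d(\rho\omega)$, and by \eqref{eq:H_rho} the Hamiltonian is the kinetic energy $\frac{1}{2}|v_{x}|_{h}^{2}$ plus the potential $\rho^{2}/(-2\lambda)$; this is precisely the Hamiltonian description of the $\MP$-system $(N,h,d\rho\omega,\rho^{2}/(-2\lambda))$, whose trajectories project to the solutions of \eqref{eq:mp_sta}. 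Combined with part (1), the $N$-projection of every geodesic of $(M,g)$ is an $\MP$-geodesic, and conversely every $\MP$-geodesic lifts. Since $H_{\rho}=H$ on $J^{-1}(\rho)$, a timelike geodesic with $|\dot\gamma|_{g}^{2}=-m^{2}$ has $H=-m^{2}/2$ and hence corresponds to an $\MP$-geodesic of energy $-m^{2}/2$. Lastly, orthogonality of a timelike $\dot\gamma$ to $\partial_{t}$ forces $\rho=0$ by Remark \ref{rmk:timecone}, in which case $H_{0}=\frac{1}{2}|v_{x}|_{h}^{2}\geq 0$ contradicts $H=-m^{2}/2<0$; thus no such geodesics exist.

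I expect the only genuinely delicate point to be the explicit reconstruction of the time component in the reciprocal direction: having recovered $x$ and $v_{x}=\dot x$, the lift's time component is fixed by the momentum constraint, $\dot t=v_{t}=-\rho/\lambda+\langle\omega,\dot x\rangle$ as in \eqref{eq:v_t}, so $t$ is obtained by a single quadrature. One must then check that this $t$ indeed satisfies \eqref{eq:const2} and that $(t,x)$ solves the full geodesic equation of $(M,g)$; this is guaranteed by the $\pi_{\rho}$-relatedness of the vector fields, but it is worth cross-checking against Theorem \ref{thm:geo_sta_mp} to make sure the sign conventions in the Lorentz force of $\rho\,d\omega$ match those in \eqref{eq:mp_sta}.
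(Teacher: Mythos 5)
Your proposal is correct, and its overall skeleton (reduction via Lemma \ref{lemma:sym_red}, identification of the reduced system with the $\MP$-system, energy matching through $H_{\rho}\circ\pi_{\rho}=H\circ i_{\rho}$, contradiction for orthogonality) matches the paper's. The genuine difference is in how the two arguments handle the reciprocal direction of part (1), which is the bulk of the paper's proof: the paper derives Hamilton's equations for $(TM,\eta,H)$ by a long explicit coordinate computation, solving $\eta(X_{H},V)=dH(V)$ component by component to obtain the formulas for $\dot{t},\dot{x}^{\ell},\dot{v}^{0},\dot{v}_{x}^{\ell}$, and then verifies directly that the lift $(t,x,v^{0},v_{x})$ with $\dot{t}=v^{0}$ and $-\lambda(v^{0}-\omega_{j}v_{x}^{j})=\rho$ solves them; you instead invoke the flow relation $\pi_{\rho}\circ F_{t}=H_{t}\circ\pi_{\rho}$ from Theorem \ref{thm:ham_red} together with uniqueness of integral curves, lifting an initial point through the fiber and recovering the time component by a single quadrature. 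Your soft argument is shorter, conceptually cleaner, and immune to sign slips; the paper's computation buys concrete formulas (the explicit Hamilton equations, in particular \eqref{eq:flow4}) that make the matching with the $\MP$-geodesic equations of \cite{mt23}*{Equations (A.1), (A.2)} a literal comparison rather than an appeal to the standard twisted-symplectic description, and similar coordinate data reappear in Section \ref{sec:conv}. The one point where you are slightly lighter than the paper is exactly the identification you flag yourself: that $(TN,\eta_{\rho},H_{\rho})$ with $\eta_{\rho}=\delta^{\flat}+\pi^{*}(\rho\,d\omega)$ generates \eqref{eq:mp-geo} with the Lorentz force of $\rho\,d\omega$ and potential $\rho^{2}/(-2\lambda)$ with the paper's sign conventions; this is precisely what the citation to \cite{mt23} settles, so your cross-check against Theorem \ref{thm:geo_sta_mp} is the right (and needed) closing step. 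Your orthogonality argument ($\rho=0$ forces $H_{0}=\frac{1}{2}|v_{x}|_{h}^{2}\geq 0$, contradicting $H=-m^{2}/2<0$) is an equivalent repackaging of the paper's direct substitution yielding $|\dot{x}|_{h}^{2}=-m^{2}$.
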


\begin{proof} \hfill
    \begin{enumerate}
    \item The first part follows Theorem \ref{thm:ham_red} and Lemma \ref{lemma:sym_red}. For the second part, we will find the Hamilton's equations for the Hamiltonian system $(TM,\eta, H)$. To this end, we examine the equation
    \begin{equation} \label{eq:flow}
        \eta(X_{H},V)=dH(V).
    \end{equation}
    We write in coordinates $X_{H}=A^{0}\partial_{t}+A^{i}\partial_{x^{i}}+B^{0}\partial_{v^{0}}+B^{i}\partial_{v_{x}^{i}}$ and $V=C^{0}\partial_{t}+C^{i}\partial_{x^{i}}+D^{0}\partial_{v^{0}}+D^{i}\partial_{v_{x}^{i}}$.Then, the right-hand side of \eqref{eq:flow} is
    \begin{align*}
        dH(V)=&\left( -\frac{1}{2}\partial_{x^{k}}\lambda (v^{0})^{2}+\partial_{x^{k}}(\lambda \omega_{i})v^{0}v_{x}^{i} \right. \\
        &\left.-\frac{1}{2}\partial_{x^{k}}(\lambda \omega_{i}\omega_{j})v_{x}^{i}v_{x}^{j}+\frac{1}{2}\partial_{x^{k}}h_{ij}v_{x}^{i}v_{x}^{j} \right) C^{k} \\
        &+(-\lambda v^{0}+\lambda \omega_{i}v_{x}^{i})D^{0} \\
        &+(\lambda v^{0}\omega_{k}-\lambda \omega_{i}\omega_{k}v_{x}^{i}+h_{ik}v_{x}^{i})D^{k},
    \end{align*}
    while the left-hand side of \eqref{eq:flow} is
    \begin{align*}
        \eta(X_{H},V)=&-\lambda(A^{0}D^{0}-B^{0}C^{0})-v^{0}\partial_{x^{j}}\lambda (A^{0}C^{j}-C^{0}A^{j}) \\
        &+v_{x}^{k}\partial_{x^{j}}(\lambda \omega_{k})(A^{0}C^{j}-C^{0}A^{j})+\lambda \omega_{j}(A^{0}D^{j}-C^{0}B^{j}) \\
        &+\lambda \omega_{i}(A^{i}D^{0}-B^{0}C^{i})+v^{0}\partial_{x^{j}}(\lambda \omega_{i})(A^{i}C^{j}-C^{i}A^{j}) \\
        &-v_{x}^{k}\partial_{x^{j}}(\lambda \omega_{i}\omega_{k})(A^{i}C^{j}-C^{i}A^{j})+v_{x}^{k}\partial_{x^{j}}h_{ik}(A^{i}C^{j}-C^{i}A^{j}) \\
        &-\lambda \omega_{i}\omega_{j}(A^{i}D^{j}-C^{i}B^{j})+h_{ij}(A^{i}D^{j}-C^{i}B^{j}).
    \end{align*}
    Let $C^{0}=0$, $C^{k}=0$ and $D^{k}=0$, $D^{0} \neq 0$ in \eqref{eq:flow}, and use $\lambda>0$ to find
    \begin{equation} \label{eq:flow1}
        -v^{0}+\omega_{i}v_{x}^{i}=-A^{0}+\omega_{i} A^{i}.
    \end{equation}
    Now let $C^{0}=0$, $C^{k}=0$, $D^{0}=0$, and $D^{k} \neq 0$ in \eqref{eq:flow}. We get
    \begin{equation} \label{eq:flow2}
        \lambda v^{0}\omega_{k}-\lambda \omega_{i}\omega_{k}v_{x}^{i}+h_{ik}v_{x}^{i}=\lambda \omega_{k}A^{0}-\lambda \omega_{i}\omega_{k}A^{i}D^{k}+h_{ik}A^{i}D^{k}.
    \end{equation}
    Using \eqref{eq:flow1} and \eqref{eq:flow2} we see that $A^{0}=v^{0}$ and $A^{i}=v_{x}^{i}$. Now, let $C^{k}=0$, $D^{0}=0$, $D^{k}=0$ and $C^{0} \neq 0$ in \eqref{eq:flow} to obtain
    \begin{equation} \label{eq:flow3}
        \lambda B^{0}+v^{0}\partial_{x^{j}}\lambda v_{x}^{j}-v_{x}^{k}\partial_{x^{j}}(\lambda \omega_{k})v_{x}^{j}-\lambda \omega_{j}B^{j}=0.
    \end{equation}
    On the other hand, if we put $C^{0}=0$, $D^{0}=0$, $D^{k}=0$ and $C^{k} \neq 0$, it follows that
    \begin{align*}
        &-\frac{1}{2}\partial_{x^{k}}\lambda (v^{0})^{2}+\partial_{x^{k}}(\lambda \omega_{i})v^{0}v^{i}-\frac{1}{2}\partial_{x^{k}}(\lambda \omega_{i}\omega_{j})v_{x}^{i}v_{x}^{j}+\frac{1}{2}\partial_{x^{k}}h_{ij}v_{x}^{i}v_{x}^{j} \\
        =&-v^{0}\partial_{x^{k}}\lambda v^{0}+v_{x}^{j}\partial_{x^{k}}(\lambda \omega_{j})v^{0}-\lambda \omega_{k}B^{0}+v^{0}\partial_{x^{k}}(\lambda \omega_{i})v_{x}^{i}- v^{0}\partial_{x^{j}}(\lambda \omega_{k})v_{x}^{j} \\
        &-v_{x}^{j}\partial_{x^{k}}(\lambda \omega_{i}\omega_{j})v_{x}^{i}+v_{x}^{i}\partial_{x^{j}}(\lambda \omega_{k}\omega_{i})v_{x}^{j}+v_{x}^{j}\partial_{x^{k}}h_{ij}v_{x}^{i}- v_{x}^{i}\partial_{x^{j}}h_{ik}v_{x}^{j} \\
        &+\lambda \omega_{k}\omega_{j}B^{j}-h_{kj}B^{j}.
    \end{align*}
    Using this and \eqref{eq:flow3} we obtain
    \begin{equation} \label{eq:flow4}
        B^{\ell}=-\Gamma_{ij}^{\ell}v_{x}^{i}v_{x}^{j}-\frac{1}{2}(v^{0}-\omega_{j}v_{x}^{j})^{2}h^{k\ell}\partial_{x^{k}}\lambda+\lambda (v^{0}-\omega_{j}v_{x}^{j})h^{k\ell}(\partial_{x^{k}}\omega_{i}-\partial_{x^{i}}\omega_{k})v_{x}^{i}.
    \end{equation}
    Therefore, Hamilton's equations take the form
    \begin{align*}
        \dot{t}=&v^{0}, \\
        \dot{x}^{\ell}=&v_{x}^{\ell}, \\
        \dot{v}^{0}=&-\frac{1}{\lambda}v^{0}\partial_{x^{j}}\lambda v_{x}^{j}+\frac{1}{\lambda} \partial_{x^{j}}(\lambda \omega_{k})v_{x}^{j}v_{x}^{k}-\omega_{\ell}\Gamma_{ij}^{\ell}v_{x}^{i}v_{x}^{j} \\
        &-\frac{1}{2}(v^{0}-\omega_{j}v_{x}^{j})^{2}h^{k\ell}\omega_{\ell}\partial_{x^{k}}\lambda-\lambda(v^{0}-\omega_{j}v_{x}^{j})h^{k\ell}\omega_{\ell}(\partial_{x^{k}}\omega_{i}-\partial_{x^{i}}\omega_{k})v_{x}^{i}, \\
        \dot{v}_{x}^{\ell}=&-\Gamma_{ij}^{\ell}v_{x}^{i}v_{x}^{j}-\frac{1}{2}(v^{0}-\omega_{j}v_{x}^{j})^{2}h^{k\ell}\partial_{x^{k}}\lambda+\lambda (v^{0}-\omega_{j}v_{x}^{j})h^{k\ell}(\partial_{x^{k}}\omega_{i}-\partial_{x^{i}}\omega_{k})v_{x}^{i}.
    \end{align*}    
    Take a trajectory $(x,v_{x})$ of the Hamiltonian flow on $(TN,\eta_{\rho},H_{\rho})$. Then $(x,v_{x})$ satisfy \cite{mt23}*{Equations (A.1), (A.2)}. Define $(t,v^{0})$ to satisfy
    \begin{align*}
        \dot{t}=v^{0}, \\
        -\lambda(v^{0}-\omega_{j}v_{x}^{j})=\rho.
    \end{align*}
    Then, $(t,x,v^{0},v_{x})$ satisfies the Hamilton's equations for the system $(TM,\eta,H)$ described before. 
    \item Take a geodesic $\gamma=(t,x)$ on $(M,g)$. Then $(\gamma,\dot{\gamma})$ satisfy Hamilton's equations of the Hamiltonian system $(TN,\eta,H)$, because these equations are exactly the geodesics equations. By the previous part of the theorem, the projection $(x,\dot{x})$ satisfy Hamilton's equations of the Hamiltonian system $(TN,\eta_{\rho},H_{\rho})$. However, these last Hamiltonian's equations are exactly the equations of $\MP$-geodesics, see \cite{mt23}*{Equations (A.1), (A.2)}. This proves the first part.

    For the second part, consider $x$ to be a $\MP$-geodesic of the $\MP$-system $(N,h,d\rho \omega,\rho^{2}/(-2\lambda))$. Define $t$ to satisfy \eqref{eq:const2}. Then, $\gamma:=(t,x)$ is a geodesic on the manifold $(M,g)$.
    
    Now take a geodesic with $|\dot{\gamma}|_{g}^{2}=-m^2$. This can be written as 
    \begin{equation} \label{eq:timelike}
    -\lambda(x)(\dot{t}-\langle \omega,\dot{x}\rangle)^{2}+|\dot{x}|_{h}^{2}=-m^{2},
    \end{equation}
    where $\gamma=(t,x)$. This together with \eqref{eq:const2} gives
    \begin{equation} \label{eq:2e}
        \frac{\rho^{2}}{-\lambda}+|\dot{x}|_{h}^{2}=-m^{2}.
    \end{equation}
    Then, the $\MP$-geodesic $x$, associated to the geodesic $\gamma$ has constant energy $-\frac{m^{2}}{2}$.

    For the last part, assume by contradiction that there exist a geodesic $\gamma=(t,x)$ with $|\dot{\gamma}|_{g}^{2}=-m^{2}$ so that $(\dot{\gamma},\partial_{t})_{g}=0$. Then 
    \begin{equation} \label{eq:J=0}
        \lambda (\dot{t}-\langle \omega, \dot{x} \rangle)=0.
    \end{equation}
    This together with \eqref{eq:timelike} imply that $|\dot{x}|_{h}^{2}=-m^{2}$. This contradiction finishes the proof.
\end{enumerate}
\end{proof}

\section{Relations between Standard Stationary Manifolds and MP-systems} \label{sec:rel}

Since we want to use rigidity results for simple $\MP$-systems, in this section we explore the connections between the symplicity of the system $(N,h,d\rho\omega,\rho^{2}/(-2\lambda))$ and the geometry of $(M,g)$. Notions about $\MP$-systems are given in Appendix \ref{app:mp}.

\subsection{Geodesic conectedness and the exponential map} \label{sec:con_exp}

The following lemma shows the correspondence between the fact that the $\MP$-exponential map to be a diffeomorphism and the existence of timelike geodesics joining points with integral lines of $\partial_{t}$.

\begin{lemma}
    Let $p=(t_{1},y)$, $q=(t_{2},z)$ with $y \neq z$ and set $\ell=\{(s,z):s \in \R\}$. Then, there exists a unique timelike geodesic joining $p$ with $\ell$ so that $(\partial_{t},\dot{\gamma})_{g}=\rho$ and $|\dot{\gamma}|_{g}^{2}=-m^{2}$ if and only if the $\MP$-exponential map of energy $-m^{2}/2$ of the system $(N,h,d\rho \omega,\rho^{2}/(-2\lambda))$, $\exp_{y}^{-m^{2}/2}$, is a diffeomorphism.
\end{lemma}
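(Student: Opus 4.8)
The plan is to reduce the statement entirely to the $\MP$-system via the correspondence of Theorem \ref{thm:rel_lorentz_mp}(2), and then to recognize the resulting condition on spatial geodesics as the bijectivity of the $\MP$-exponential map. Recall from that theorem that timelike geodesics $\gamma=(t,x)$ of $(M,g)$ with $(\partial_{t},\dot\gamma)_{g}=\rho$ and $|\dot\gamma|_{g}^{2}=-m^{2}$ are in one-to-one correspondence with $\MP$-geodesics $x$ of energy $-m^{2}/2$ of $(N,h,d\rho\omega,\rho^{2}/(-2\lambda))$: projection $\gamma\mapsto x$ in one direction, and in the other direction one lifts a spatial $\MP$-geodesic $x$ to $\gamma=(t,x)$ by solving $\dot t=-\rho/\lambda(x)+\langle\omega(x),\dot x\rangle$ (see \eqref{eq:const2}, \eqref{eq:v_t}) with the initial value $t(0)=t_{1}$. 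At the level of initial velocities, the mass--momentum constraints force, by \eqref{eq:2e}, the identity
\[
|v_{x}|_{h}^{2}=\frac{\rho^{2}}{\lambda(y)}-m^{2}=2\left(-\tfrac{m^{2}}{2}-U(y)\right),\qquad U=\frac{\rho^{2}}{-2\lambda},
\]
so the admissible initial velocities $v=\dot\gamma(0)$ at $p$ are parametrized precisely by the vectors $v_{x}$ on the energy-$(-m^{2}/2)$ sphere in $T_{y}N$, via $v\mapsto v_{x}$.

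First I would translate the condition of joining $p$ with $\ell$. Since $\ell=\{(s,z):s\in\R\}$ is the whole $\partial_{t}$-orbit through $z$, a geodesic $\gamma=(t,x)$ with $\gamma(0)=p$ reaches $\ell$ at some parameter $T$ if and only if its spatial part satisfies $x(T)=z$; the arrival time $t(T)=s$ is then automatic and imposes no further constraint. Because the lift of $x$ with $t(0)=t_{1}$ and fixed momentum $\rho$ is unique, two timelike geodesics from $p$ to $\ell$ coincide if and only if their spatial projections coincide. Consequently the timelike geodesics (of the prescribed $\rho$ and $m$) joining $p$ to $\ell$ are in bijection with the $\MP$-geodesics of energy $-m^{2}/2$ joining $y$ to $z$, so existence and uniqueness transfer between the two problems.

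It remains to identify the latter condition with the diffeomorphism property of $\exp_{y}^{-m^{2}/2}$. Recalling the definition from Appendix \ref{app:mp}, the $\MP$-exponential map at energy $-m^{2}/2$ sends $w\in T_{y}N$ in its domain to the endpoint of the energy-$(-m^{2}/2)$ $\MP$-geodesic issued from $y$ in the direction of $w$, with the direction of $w$ encoding the initial velocity and its length encoding the flow time; the point $z=y$ corresponds to the origin, which is excluded by the hypothesis $y\neq z$. Thus surjectivity and injectivity of $\exp_{y}^{-m^{2}/2}$ are exactly existence and uniqueness, for every $z\neq y$, of such an $\MP$-geodesic from $y$ to $z$. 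Reading the equivalence of the previous paragraph over all integral lines $\ell$ (equivalently all $z\neq y$) therefore matches bijectivity of $\exp_{y}^{-m^{2}/2}$ with existence and uniqueness of the timelike geodesics joining $p$ to each such $\ell$.

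The main obstacle is the upgrade from a bijection to a genuine diffeomorphism: a smooth bijective exponential map need not have nonsingular differential, and the vanishing of $d\exp_{y}^{-m^{2}/2}$ is exactly the presence of conjugate points. I would close this gap by noting that a (non-degenerate) conjugate point produces a fold of $\exp_{y}^{-m^{2}/2}$, so points $z$ near the conjugate value admit two distinct $\MP$-geodesics, contradicting uniqueness; the remaining (degenerate) case and the global matching onto all of $N$ are handled by the characterization of simplicity for $\MP$-systems recalled in Appendix \ref{app:mp}, under which bijectivity together with strict convexity of $\partial N$ yields the diffeomorphism. Care must also be taken with the quantification in $z$ and with the parametrization by hyperbolic angle from Remark \ref{rmk:timecone}, which ensures that each admissible $v_{x}$ genuinely arises from a timelike, future-pointing $v$ at $p$.
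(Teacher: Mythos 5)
Your core argument is exactly the paper's: reduce via Theorem \ref{thm:rel_lorentz_mp}(2), use \eqref{eq:2e} to place the admissible initial velocities on the energy-$(-m^{2}/2)$ level, lift spatial $\MP$-geodesics uniquely by solving \eqref{eq:const2} with $t(0)=t_{1}$ so that hitting $\ell$ imposes only the spatial condition $x(T)=z$, and then match surjectivity of $\exp_{y}^{-m^{2}/2}$ with existence and injectivity with uniqueness (in both directions, producing two distinct lifted geodesics from two distinct $\MP$-geodesics to contradict uniqueness, and vice versa). Up to this point the proposal is correct and essentially identical to the paper's proof.

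The divergence is in the final upgrade from bijection to diffeomorphism, and there your repair does not go through as written. The paper disposes of this step in one line, asserting that the $\MP$-exponential map is a local diffeomorphism ``(the proof is the same as the usual one in the Riemannian case),'' so that a smooth bijective local diffeomorphism is a diffeomorphism. You correctly flag that this is where conjugate points could hide, but the ingredients you invoke to close the gap are not available: Appendix \ref{app:mp} contains no ``characterization of simplicity'' beyond Definition \ref{defin:simple} itself (which simply \emph{defines} simplicity as strict $\MP$-convexity plus the diffeomorphism property), so there is no appendix result to appeal to; strict $\MP$-convexity of $\partial N$ is not among the lemma's hypotheses, so it cannot be used; and your fold argument only rules out \emph{nondegenerate} singularities of $\exp_{y}^{-m^{2}/2}$, leaving the degenerate case to the nonexistent appendix statement. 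A complete treatment along your lines would need genuine structure theory for the conjugate locus (Warner-type results), which neither you nor the paper supplies; to match the paper's proof you should instead state and use the local-diffeomorphism property of the $\MP$-exponential map directly, as the author does.
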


\begin{proof}
Let $y \neq z$ be points in $N$. Let $\gamma=(t,x)$ be the unique timelike geodesic joining $p$ with $\ell$ with $(\partial_{t},\dot{\gamma})_{g}=\rho$ and $|\gamma|=-m^{2}$. Then, in virtue of Theorem \ref{thm:rel_lorentz_mp}, $x$ is a $\MP$-geodesic of the system $(N,h,d\rho\omega, \rho^{2}/(-2\lambda))$ with $x(0)=y$, $E(x,\dot{x})=-m^{2}/2$ and $x(s_{0})=z$ for some $s_{0}$. Then, $\exp_{y}^{-m^{2}/2}(s_{0}\dot{x}(0))=z$, which shows that $\exp_{y}^{-m^{2}}$ is surjective. If we have $\exp_{y}^{-m^{2}}(s_{1}v_{1})=\exp_{y}^{-m^{2}}(s_{2}v_{2})=z$, then there are two $\MP$-geodesics $x_{1},x_{2}$, both of energy $-m^{2}/2$ so that $x_{j}(0)=y$, $\dot{x}_{1}(0)=v_{1}$, $\dot{x}_{2}=v_{2}$ and $x_{1}(s_{1})=x_{2}(s_{2})=z$. Define $t$ by \eqref{eq:const2} and initial condition $t(0)=t_{1}$. Then, $\gamma_{j}:=(t,x_{j})$ are two timelike geodesics joining $z$ with $V$ so that $(\partial_{t},\dot{\gamma})_{g}=\rho$ and $|\dot{\gamma}|_{g}^{2}=-m^{2}$, which contradicts the uniqueness condition. This shows injectivity of the exponential map. Since the $\MP$-exponential map is a local diffeomorphism (the proof is the same as the usual one in the Riemannian case), we obtain the desired conclusion.

Reciprocally, let $p=(t_{1},y)$, $q=(t_{2},z)$ with $y \neq z$ be points in $M$. Since $\exp_{y}^{-m^{2}/2}$ is a diffeomorphism, there exist a unique $v_{x}$ of energy $-m^{2}/2$ and a unique $s_{0}$ so that $\exp_{y}^{-m^{2}/2}(s_{0}v_{x})=z$. This means that there exists a $\MP$-geodesic $x$ of energy $-m^{2}/2$ so that $x(0)=y$, $\dot{x}(0)=v_{x}$ and $x(s_{0})=z$. Define $t$ by \eqref{eq:const2} and initial condition $t(0)=t_{1}$. Therefore, $\gamma:=(t,x)$ is a timelike geodesic joining $z$ with $V$ so that $(\partial_{t},\dot{\gamma})_{g}=\rho$ and $|\dot{\gamma}|=-m^{2}$. If there exists another $\gamma$ satisfying the previous conditions, say $\gamma=(\tilde{t},\tilde{x})$, then $\tilde{x}$ would satisfies the same conditions as $x$, contradicting the injectivity of the exponential map.
\end{proof}

By \cite{masiello}*{Theorem 3.5.2}, on $M$ as before, we can always join a point with an integral line of $\partial_{t}$ by a timelike geodesic. However, since we are working with a fixed mass, the result is not enough to ensure the properties that we need about $\exp^{-m^{2}/2}$. Furthermore, the result does not ensure that we can choose a geodesic with $(\partial_{t},\dot{\gamma})_{g}=\rho$, that is, $\rho$ could change from geodesic to geodesic. A way to ensure that the exponential map of energy $-m^{2}/2$ of the $\MP$-system $(N,h,d\rho \omega,\rho^{2}/(-2\lambda))$ to be a diffeomorphism would be the existence of \emph{brachistochrones}, which are the curves in the set
\[ \mathscr{B}_{p,\ell}(\rho)=\{\gamma \in C^{1}([0,T_{\gamma}],M):\gamma(0)=p, \, \gamma(T_{\gamma})\in \ell, |\dot{\gamma}|_{g}^{2}=-1, \, (\partial_{t},\dot{\gamma})_{g}=\rho\}. \]
The existence of this type of curves is studied in \cite{gp}. In our case, the existence, assuming \eqref{eq:cond_lam}, would depends on whether or not $M$ is complete with the Riemannian metric
\[ g(v,v)=\lambda(v_{0}-\langle \omega,v_{x}\rangle)^{2}+|v_{x}|_{h}^{2}. \]

\subsection{Convexity} \label{sec:conv}

To undestand whtat $\MP$-convextiy of the system $(N,h,d\rho \omega,\rho^{2}/(-2\lambda))$ means for $(M,g)$, we will relate the second fundamental forms of them. To this end, we first compute the Christoffel's symbols of $g$. Although one could reed them from Hamiltonian equations, it is not clear from the way in which we write them. So, using Sherman--Morrison--Woodbury formula, we first find that the inverse of \eqref{eq:metric_mat} is given by
\[ \begin{pmatrix}
    \frac{1}{-\lambda}+h^{k\ell}\omega_{k}\omega_{\ell} & h^{kj}\omega_{k} \\ h^{i\ell}\omega_{\ell} & h^{ij}
\end{pmatrix}. \]
Then, we compute and obtain 
\begin{align*}
    ^g \Gamma_{00}^{0} =& \frac{1}{2}h^{sm}\omega_{s}\partial_{x^{m}}\lambda, \\
    ^g \Gamma_{i0}^{0} =& \frac{1}{2\lambda}\partial_{x^{i}}\lambda+\frac{1}{2}h^{sm}\omega_{s}(\partial_{x^{i}}(\lambda \omega_{m})-\partial_{x^{m}}(\lambda \omega_{i})), \\
    ^g \Gamma_{ij}^{0} =&-\frac{1}{2\lambda}(\partial_{x^{j}}(\lambda \omega_{i})+\partial_{x^{i}}(\lambda \omega_{j}))+\omega_{s}\Gamma_{ij}^{s}\\ &-\frac{1}{2}h^{sm}\omega_{s}(\partial_{x^{j}}(\lambda \omega_{m}\omega_{i})+\partial_{x^{i}}(\lambda \omega_{m}\omega_{j})-\partial_{x^{m}}(\lambda \omega_{i}\omega_{j})), \\
    ^g \Gamma_{00}^{\ell}=&\frac{1}{2}h^{\ell m}\partial_{x^{m}}\lambda, \\
    ^g \Gamma_{i0}^{\ell}=&\frac{1}{2}h^{\ell m}(\partial_{x^{i}}(\lambda \omega_{m})-\partial_{x^{m}}(\lambda \omega_{i}))-\frac{1}{2}h^{t\ell}\omega_{t}\partial_{x^{i}}\lambda, \\
    ^g \Gamma_{ij}^{\ell} =&\frac{1}{2}h^{\ell t}\omega_{t}(\partial_{x^{j}}(\lambda \omega_{i})+\partial_{x^{i}}(\lambda \omega_{j}))+^h \Gamma_{ij}^{\ell}\\ &-\frac{1}{2}h^{\ell m} (\partial_{x^{j}}(\lambda \omega_{m}\omega_{i})+\partial_{x^{i}}(\lambda \omega_{m} \omega_{j})-\partial_{x^{m}}(\lambda \omega_{i} \omega_{j}) ).
\end{align*}
We observe that $\nu=(\langle \omega,\nu_{x}\rangle ,\nu_{x})$ is normal to $\partial M$ and unit exterior, where $\nu_{x}$ is the exterior unit normal to $\partial N$. Then, 
\[ ^g \nabla_{v}\nu
    =v^{j}\partial_{x^{j}}\nu^{k}+\langle \omega,\nu_{x}\rangle v^{0} \Gamma_{00}^{k}+\langle \omega,\nu_{x}\rangle v^{j} \Gamma_{0j}^{k}+\nu^{i}v^{0}\Gamma_{i0}^{k}+\nu^{i}v^{j} \Gamma_{ij}^{k}. \]
So, 
\begin{align*}
    (^g \nabla_{v}\nu,v)_{g} =&-\lambda(v^{0}-\langle \omega,v_{x}\rangle)(v_{x}^{j}\partial_{x^{j}}\langle \omega,\nu_{x} \rangle+\langle \omega,\nu_{x} \rangle v^{0}\Gamma_{00}^{0}+\langle \omega,\nu_{x}\rangle v_{x}^{j}\Gamma_{0j}^{0}+\nu_{x}^{i}v^{0}\Gamma_{i0}^{0}+\nu_{x}^{i}v_{x}^{j}\Gamma_{ij}^{0} \\
    &-\omega_{\ell}(v_{x}^{j}\partial_{x^{j}}\nu_{x}^{\ell}+\langle \omega,\nu_{x}\rangle v^{0}\Gamma_{00}^{\ell}+\langle \omega,\nu_{x}\rangle v_{x}^{j} \Gamma_{0j}^{\ell}+\nu_{x}^{i}v^{0}\Gamma_{i0}^{\ell}+\nu_{x}^{i}v_{x}^{j} \Gamma_{ij}^{\ell})) \\
    &+h_{k\ell}v_{x}^{k}(v_{x}^{j}\partial_{x^{j}}\nu_{x}^{\ell}+\langle \omega,\nu_{x}\rangle v^{0}\Gamma_{00}^{\ell}+\langle \omega,\nu_{x}\rangle v_{x}^{j} \Gamma_{0j}^{\ell}+\nu_{x}^{i}v^{0}\Gamma_{i0}^{\ell}+\nu_{x}^{i}v_{x}^{j} \Gamma_{ij}^{\ell}) \\
    =&(^h \nabla_{v_{x}}\nu_{x},v_{x})_{h}-\lambda v^{0}v_{x}^{j}\omega_{k}\partial_{x^{j}}\nu_{x}^{k}-\lambda v^{0}v^{j}\nu_{x}^{k}\partial_{x^{j}}\omega_{k} \\
    &-\lambda (v^{0})^{2} \omega_{k}\nu_{x}^{k}\frac{1}{2}h^{sm}\omega_{s}\partial_{x^{m}}\lambda \\
    &-\lambda v^{0} \omega_{k}\nu_{x}^{k} v_{x}^{j}\frac{1}{2\lambda}\partial_{x^{j}}\lambda-\lambda v^{0} \omega_{k}\nu_{x}^{k} v_{x}^{j} \frac{1}{2}h^{sm}\omega_{s}(\partial_{x^{j}}(\lambda \omega_{m})-\partial_{x^{m}}(\lambda \omega_{j})) \\
    & -\lambda (v^{0})^{2} \nu_{x}^{i}\frac{1}{2\lambda}\partial_{x^{i}}\lambda-\lambda (v^{0})^{2} \nu_{x}^{i} \frac{1}{2}h^{sm}\omega_{s}(\partial_{x^{i}}(\lambda \omega_{m})-\partial_{x^{m}}(\lambda \omega_{i})) \\
    &+\lambda v^{0} \frac{1}{2\lambda}(\partial_{x^{j}}(\lambda \omega_{i})+\partial_{x^{i}}(\lambda \omega_{j}))\nu_{x}^{i}v_{x}^{j}-\lambda v^{0}\omega_{s}\Gamma_{ij}^{s}\nu_{x}^{i}v_{x}^{j} \\
    &+\lambda v^{0}\frac{1}{2}h^{sm}\omega_{s}(\partial_{x^{j}}(\lambda \omega_{m}\omega_{i})+\partial_{x^{i}}(\lambda \omega_{m}\omega_{j})-\partial_{x^{m}}(\lambda \omega_{i}\omega_{j}))\nu_{x}^{i}v_{x}^{j} \\
    &+\lambda v^{0} \omega_{\ell} v_{x}^{j}\partial_{x^{j}}\nu_{x}^{\ell}+\lambda (v^{0})^{2}\omega_{\ell}\omega_{k}\nu_{x}^{k}\frac{1}{2}h^{\ell m} \partial_{x^{m}} \lambda \\
    &+\lambda v^{0} \omega_{\ell} \omega_{k}\nu_{x}^{k}v_{x}^{j} \frac{1}{2}h^{\ell m}(\partial_{x^{j}}(\lambda \omega_{m})-\partial_{x^{m}}(\lambda \omega_{j}))-\lambda v^{0} \omega_{\ell}\omega_{k}\nu_{x}^{k}v_{x}^{j}\frac{1}{2}h^{t\ell}\omega_{t}\partial_{x^{j}}\lambda \\
    &+\lambda (v^{0})^{2} \omega_{\ell} \nu_{x}^{i}\frac{1}{2}h^{\ell m}(\partial_{x^{i}}(\lambda \omega_{m})-\partial_{x^{m}}(\lambda \omega_{i}))-\lambda (v^{0})^{2} \omega_{\ell} \nu_{x}^{i}\frac{1}{2}h^{t\ell}\omega_{t}\partial_{x^{i}}\lambda \\
    & +\lambda v^{0} \omega_{\ell} \frac{1}{2}h^{\ell t}\omega_{t}(\partial_{x^{j}}(\lambda \omega_{i})+\partial_{x^{i}}(\lambda \omega_{j})) \nu_{x}^{i}v_{x}^{j} +\lambda v^{0} \omega_{\ell}^h \Gamma_{ij}^{\ell} \nu_{x}^{i}v_{x}^{j} \\
    & -\lambda v^{0} \omega_{\ell} \frac{1}{2}h^{\ell m} (\partial_{x^{j}}(\lambda \omega_{m}\omega_{i})+\partial_{x^{i}}(\lambda \omega_{m} \omega_{j})-\partial_{x^{m}}(\lambda \omega_{i} \omega_{j}) ) \nu_{x}^{i}v_{x}^{j} \\
    &+\lambda \omega_{a}v_{x}^{a}v_{x}^{j}\omega_{k}\partial_{x^{j}}\nu_{x}^{k}+\lambda \omega_{a}v_{x}^{a} v^{j}\nu_{x}^{k}\partial_{x^{j}}\omega_{k} \\
    &+\lambda \omega_{a}v_{x}^{a} v^{0} \omega_{k}\nu_{x}^{k}\frac{1}{2}h^{sm}\omega_{s}\partial_{x^{m}}\lambda \\
    &+\lambda \omega_{a}v_{x}^{a} \omega_{k}\nu_{x}^{k} v_{x}^{j}\frac{1}{2\lambda}\partial_{x^{j}}\lambda+\lambda \omega_{a}v_{x}^{a}\omega_{k}\nu_{x}^{k} v_{x}^{j} \frac{1}{2}h^{sm}\omega_{s}(\partial_{x^{j}}(\lambda \omega_{m})-\partial_{x^{m}}(\lambda \omega_{j})) \\
    & +\lambda v^{0}\omega_{a}v_{x}^{a} \nu_{x}^{i}\frac{1}{2\lambda}\partial_{x^{i}}\lambda+\lambda v^{0} \omega_{a}v_{x}^{a} \nu_{x}^{i} \frac{1}{2}h^{sm}\omega_{s}(\partial_{x^{i}}(\lambda \omega_{m})-\partial_{x^{m}}(\lambda \omega_{i})) \\
    &-\lambda \omega_{a}v_{x}^{a} \frac{1}{2\lambda}(\partial_{x^{j}}(\lambda \omega_{i})+\partial_{x^{i}}(\lambda \omega_{j}))\nu_{x}^{i}v_{x}^{j}+\lambda \omega_{a}v_{x}^{a}\omega_{s}\Gamma_{ij}^{s}\nu_{x}^{i}v_{x}^{j} \\
    &-\lambda \omega_{a}v_{x}^{a}\frac{1}{2}h^{sm}\omega_{s}(\partial_{x^{j}}(\lambda \omega_{m}\omega_{i})+\partial_{x^{i}}(\lambda \omega_{m}\omega_{j})-\partial_{x^{m}}(\lambda \omega_{i}\omega_{j}))\nu_{x}^{i}v_{x}^{j} \\
    &-\lambda \omega_{a}v_{x}^{a} \omega_{\ell} v_{x}^{j}\partial_{x^{j}}\nu_{x}^{\ell}-\lambda \omega_{a}v_{x}^{a} v^{0}\omega_{\ell}\omega_{k}\nu_{x}^{k}\frac{1}{2}h^{\ell m} \partial_{x^{m}} \lambda \\
    &-\lambda \omega_{a}v_{x}^{a}\omega_{\ell} \omega_{k}\nu_{x}^{k}v_{x}^{j} \frac{1}{2}h^{\ell m}(\partial_{x^{j}}(\lambda \omega_{m})-\partial_{x^{m}}(\lambda \omega_{j}))+\lambda \omega_{a}v_{x}^{a}\omega_{\ell}\omega_{k}\nu_{x}^{k}v_{x}^{j}\frac{1}{2}h^{t\ell}\omega_{t}\partial_{x^{j}}\lambda \\
    &-\lambda \omega_{a}v_{x}^{a} v^{0}\omega_{\ell} \nu_{x}^{i}\frac{1}{2}h^{\ell m}(\partial_{x^{i}}(\lambda \omega_{m})-\partial_{x^{m}}(\lambda \omega_{i}))+\lambda \omega_{a}v_{x}^{a} v^{0}\omega_{\ell} \nu_{x}^{i}\frac{1}{2}h^{t\ell}\omega_{t}\partial_{x^{i}}\lambda \\
    & -\lambda \omega_{a}v_{x}^{a} \omega_{\ell} \frac{1}{2}h^{\ell t}\omega_{t}(\partial_{x^{j}}(\lambda \omega_{i})+\partial_{x^{i}}(\lambda \omega_{j})) \nu_{x}^{i}v_{x}^{j} -\lambda \omega_{a}v_{x}^{a} \omega_{\ell}^h \Gamma_{ij}^{\ell} \nu_{x}^{i}v_{x}^{j} \\
    & +\lambda \omega_{a}v_{x}^{a} \omega_{\ell} \frac{1}{2}h^{\ell m} (\partial_{x^{j}}(\lambda \omega_{m}\omega_{i})+\partial_{x^{i}}(\lambda \omega_{m} \omega_{j})-\partial_{x^{m}}(\lambda \omega_{i} \omega_{j}) ) \nu_{x}^{i}v_{x}^{j} \\
    &+h_{k\ell}v_{x}^{k}\omega_{a}\nu_{x}^{a} v^{0}\frac{1}{2}h^{\ell m}\partial_{x^{m}}\lambda \\
    &+h_{k\ell}v_{x}^{k}v_{x}^{j}\omega_{a}\nu_{x}^{a}\frac{1}{2}h^{\ell m}(\partial_{x^{j}}(\lambda \omega_{m})-\partial_{x^{m}}(\lambda \omega_{j}))-h_{k\ell}v_{x}^{k}v_{x}^{j}\omega_{a}\nu_{x}^{a}\frac{1}{2}h^{t\ell}\omega_{t}\partial_{x^{j}}\lambda \\
    &+h_{k\ell}v_{x}^{k}\nu_{x}^{i}v^{0}\frac{1}{2}h^{\ell m}(\partial_{x^{i}}(\lambda \omega_{m})-\partial_{x^{m}}(\lambda \omega_{i}))-h_{k\ell}v_{x}^{k}\nu_{x}^{i}v^{0}\frac{1}{2}h^{t\ell}\omega_{t}\partial_{x^{i}}\lambda \\
    =&(^{h}\nabla_{v_{x}}\nu_{x},v_{x})_{h}-\lambda(v^{0}-\langle \omega,v_{x}\rangle)v_{x}^{j}\nu_{x}^{k}\partial_{x^{j}}\omega_{k} \\
    &-\lambda(v^{0}-\langle \omega,v_{x}\rangle) \langle \omega,\nu_{x} \rangle v_{x}^{j}\frac{1}{2\lambda}\partial_{x^{j}}\lambda-\lambda(v^{0}-\langle \omega,v_{x}\rangle)v^{0}\nu_{x}^{i}\frac{1}{2\lambda}\partial_{x^{i}}\lambda \\
    &+\lambda(v^{0}-\langle \omega,v_{x}\rangle)\frac{1}{2\lambda}( \partial_{x^{j}}(\lambda \omega_{i})+\partial_{x^{i}}(\lambda \omega_{j}) )\nu_{x}^{i}v_{x}^{j}\\
    &-\lambda(v^{0}-\langle \omega,v_{x}\rangle)\langle \omega,\nu_{x}\rangle \frac{1}{2}h^{t\ell}\omega_{\ell}\omega_{t}v_{x}^{j}\partial_{x^{j}}\lambda \\
    &-\lambda(v^{0}-\langle \omega,v_{x}\rangle)v^{0}\frac{1}{2}h^{t\ell}\omega_{t}\omega_{\ell}\nu_{x}^{i}\partial_{x^{i}}\lambda \\
    &+\lambda(v^{0}-\langle \omega,v_{x}\rangle)\frac{1}{2}h^{t\ell}\omega_{\ell}\omega_{t}( \partial_{x^{j}}(\lambda \omega_{i})+\partial_{x^{i}}(\lambda \omega_{j}))\nu_{x}^{i}v_{x}^{j} \\
    &+\langle \omega,\nu_{x} \rangle v^{0}\frac{1}{2}v_{x}^{k}\partial_{x^{k}}\lambda \\
    &+\langle \omega,\nu_{x}\rangle \frac{1}{2}(\partial_{x^{j}} (\lambda \omega_{k}) - \partial_{x^{k}}(\lambda \omega_{j})) v_{x}^{j}v_{x}^{k}-\langle \omega,v_{x}\rangle \langle \omega,\nu_{x} \rangle \frac{1}{2}v_{x}^{j}\partial_{x^{j}}\lambda \\
    &+v^{0}\frac{1}{2}(\partial_{x^{i}}(\lambda \omega_{k})-\partial_{x^{k}}(\lambda \omega_{i}))\nu_{x}^{i}v_{x}^{k}-v^{0}\langle \omega,v_{x}\rangle \frac{1}{2}\nu_{x}^{i}\partial_{x^{i}}\lambda.
\end{align*}

If we only consider vectors with $(\partial_{t},v)_{g}=\rho$, and we assume that $\langle\omega,v_{x} \rangle=0$ for all $v_{s} \in S^{k}\partial N$ (so that $v=(-\rho/\lambda,v_{x})$), we obtain
\[ (^g \nabla_{v}\nu,v)_{g}
    =(^{h}\nabla_{v_{x}}\nu_{x},v_{x})_{h}+(Yv_{x},\nu_{x})_{h}-dU(\nu_{x})+|\omega|_{h}^{2}dU(\nu)-|\omega|_{h}^{2}\langle d^{s}\rho\omega,\nu_{x}\otimes v_{x} \rangle, \]
where $Y$ corresponds to the Lorentz force associated to $\rho\omega$ and $U=\rho^{2}/(-2\lambda)$. Then,
\[ \Pi_{M}(v,v)
    =\Pi_{N}(v_{x},v_{x})-(Yv_{x},\tilde{\nu}_{x})_{h}+dU(\tilde{\nu}_{x})-|\omega|_{h}^{2}dU(\tilde{\nu}_{x})+|\omega|_{h}^{2}\langle d^{s}\rho\omega,\tilde{\nu}_{x}\otimes v_{x} \rangle, \]
where $\tilde{\nu}_{x}=-\nu_{x}$ is the interior normal to $\partial N$. Hence, we conclude:

\begin{lemma}
    Assume that $\langle \omega,v_{x}\rangle=0$ for all $v_{x} \in S^{k}\partial N$, and that 
    \[ \langle d^{s}\omega,\tilde{\nu}_{x}\otimes v_{x}\rangle-dU(\tilde{v}_{x})=0. \]
    Then $\partial N$ is strictly $\MP$-convex if and only if $\Pi(v,v)>0$ for all $v=(-\rho/\lambda,v_{x})$ with $v_{x} \in S^{k}\partial N$.
\end{lemma}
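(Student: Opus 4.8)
The plan is to read the equivalence off directly from the second–fundamental–form identity established immediately above the statement, so that essentially no new computation is required. Recall from Appendix \ref{app:mp} that the relevant convexity notion for the $\MP$-system $(N,h,d\rho\omega,\rho^{2}/(-2\lambda))$ is governed by its $\MP$-second fundamental form
\[ \Pi_{N}^{\MP}(v_{x},v_{x})=\Pi_{N}(v_{x},v_{x})-(Yv_{x},\tilde{\nu}_{x})_{h}+dU(\tilde{\nu}_{x}), \]
where $Y$ is the Lorentz force of $\rho\omega$ and $U=\rho^{2}/(-2\lambda)$; strict $\MP$-convexity of $\partial N$ means precisely that $\Pi_{N}^{\MP}(v_{x},v_{x})>0$ for every $v_{x}\in S^{k}\partial N$. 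Thus the entire task reduces to matching the Lorentzian form $\Pi_{M}$ with $\Pi_{N}^{\MP}$ under the two standing hypotheses.

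First I would invoke the displayed formula derived just before the statement, namely
\[ \Pi_{M}(v,v)=\Pi_{N}(v_{x},v_{x})-(Yv_{x},\tilde{\nu}_{x})_{h}+dU(\tilde{\nu}_{x})-|\omega|_{h}^{2}dU(\tilde{\nu}_{x})+|\omega|_{h}^{2}\langle d^{s}\rho\omega,\tilde{\nu}_{x}\otimes v_{x}\rangle, \]
which holds exactly for vectors of the form $v=(-\rho/\lambda,v_{x})$ with $v_{x}\in S^{k}\partial N$ once one uses the first hypothesis $\langle\omega,v_{x}\rangle=0$ on $S^{k}\partial N$. Grouping the last two terms and inserting the definition of $\Pi_{N}^{\MP}$ gives
\[ \Pi_{M}(v,v)=\Pi_{N}^{\MP}(v_{x},v_{x})+|\omega|_{h}^{2}\bigl(\langle d^{s}\rho\omega,\tilde{\nu}_{x}\otimes v_{x}\rangle-dU(\tilde{\nu}_{x})\bigr). \]
The second hypothesis is engineered to be exactly the vanishing of the parenthesis, so the correction term drops out and one is left with the clean identity $\Pi_{M}(v,v)=\Pi_{N}^{\MP}(v_{x},v_{x})$.

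Finally I would observe that the assignment $v_{x}\mapsto v=(-\rho/\lambda,v_{x})$ is a bijection between $S^{k}\partial N$ and the timelike vectors tangent to $\partial M$ satisfying $(\partial_{t},v)_{g}=\rho$, so positivity of one form over its natural domain is equivalent to positivity of the other. Hence $\Pi_{M}(v,v)>0$ for all such $v$ if and only if $\Pi_{N}^{\MP}(v_{x},v_{x})>0$ for all $v_{x}\in S^{k}\partial N$, i.e.\ if and only if $\partial N$ is strictly $\MP$-convex, which is the asserted equivalence. The substantive work is the long Christoffel-symbol computation feeding the displayed formula, already carried out above; the main (minor) point to watch is keeping the sign and normalization conventions for $\tilde{\nu}_{x}=-\nu_{x}$ and for $\Pi_{N}^{\MP}$ consistent with Appendix \ref{app:mp} — in particular the $\rho$-weighting of $d^{s}\omega$ — and verifying that the family $v=(-\rho/\lambda,v_{x})$ genuinely exhausts all directions relevant to $\MP$-convexity.
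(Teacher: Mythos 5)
Your proposal is correct and takes essentially the same route as the paper: the paper's proof of this lemma consists exactly of the long Christoffel-symbol computation culminating in the displayed identity for $\Pi_{M}(v,v)$, from which the lemma is read off just as you do, by identifying strict $\MP$-convexity with positivity of $\Pi_{N}(v_{x},v_{x})-(Yv_{x},\tilde{\nu}_{x})_{h}+dU(\tilde{\nu}_{x})$ on $S^{k}\partial N$ and using the second hypothesis to annihilate the $|\omega|_{h}^{2}$-weighted correction term. Your caveat about the $\rho$-weighting is well placed — the lemma's hypothesis writes $d^{s}\omega$ while the derived formula carries $d^{s}\rho\omega$ (and $\tilde{v}_{x}$ appears where $\tilde{\nu}_{x}$ is meant), apparent typos in the paper whose intended reading your reconciliation matches.
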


\subsection{Energy levels and angles} \label{sec:ener_ang}

As is mentioned in Appendix \ref{app:mp}, when we work with $\MP$-systems, we should consider energy levels strictly greater than the maximum of the potential. If we consider timelike geodesics $\gamma=(t,x)$ with $|\dot{\gamma}|_{g}^{2}=-m^{2}$, by Theorem \ref{thm:rel_lorentz_mp}, the energy level of the corresponding $\MP$-geodesic is $-m^{2}/2$. Therefore, for the $\MP$-system $(N,h,d\rho \omega,\rho^{2}(-2\lambda))$ the condition is
    \begin{equation} \label{eq:cond_lam}
        \lambda<\frac{\rho^{2}}{m^{2}}, \qquad \forall x \in N.
    \end{equation}
Note that from \eqref{eq:2e}, we already have 
    \begin{equation} \label{eq:cond_lam_partial}
        \lambda \leq \frac{\rho^{2}}{m^{2}}
    \end{equation}
on any $\MP$-geodesic, with equality if and only if $\dot{x}=0$ on some point of the geodesic $\gamma=(t,x)$. In this case, using \eqref{eq:const2} we obtain $\dot{t}=-m^{2}/\rho$ at that point. Hence, we still have to impose condition \eqref{eq:cond_lam} since it does not come from a fact of the manifold $(M,g)$.

If we let $B:=\max_{x \in M}\lambda$, then \eqref{eq:cond_lam} implies 
\begin{equation} \label{eq:bound_rho}
        \rho \in (-\infty,-m\sqrt{B}) \cup (m\sqrt{B},\infty).
\end{equation}

The boundness $\lambda$ has a geometric interpretation: we can find bounds for the hyperbolic angle between $\partial_{t}$ and the tangent vector to a timelike geodesic. Let $\gamma=(t,x)$ be a geodesic with $|\dot{\gamma}|_{g}^{2}=-m^{2}$ and $(\partial_{t},\dot{\gamma})_{g}=\rho$. Then, as in Remark \ref{rmk:timecone},
\begin{equation} \label{eq:angle1}
    \frac{\rho}{\mp m\sqrt{\lambda}}=\cosh \varphi,
\end{equation}
depending if $\dot{\gamma}$ is in the same timecone as $\partial_{t}$ ($\rho<0$) or not ($\rho>0$). Furthermore, if we write $A:=\min_{x \in M} \lambda$, in the case that $\dot{\gamma}$ is in the same timecone as $\partial_{t}$ ($\rho<0$) we have
\[ -\frac{\rho}{m\sqrt{B}} \leq \cosh \varphi \leq -\frac{\rho}{m\sqrt{A}}, \]
and using \eqref{eq:bound_rho} we get
\[ 1 <\cosh \varphi <\infty, \]
while if $\dot{\gamma}$ is in the opposite timecone as $\partial_{t}$ ($\rho>0$), a similar analysis gives
\[ 1 <\cosh \varphi <\infty. \]
Therefore, condition \eqref{eq:cond_lam} is equivalent to requiring the hyperbolic angle to be strictly greater than zero.

\subsection{The scattering relation} \label{sec:sca}

Let $\nu_{x}$ be the exterior unit normal to $\partial N$ (with respect to the metric $h$). Then, $\nu:=(\langle \omega,\nu_{x} \rangle,\nu_{x})$ is a unit normal exterior to $\R \times \partial N$. Hence, given $(t,x,v) \in TM$ with $(t,x) \in \R \times \partial M$, its projection to $T(\partial M)$ is 
\begin{align*}
    v'&=v-(v,\nu)_{g}\nu \\
    &=v-(v_{x},\nu_{x})_{h}\nu \\
    &=(v_{t}-\langle \omega,\nu_{x} \rangle(v_{x},\nu_{x})_{h},v_{x}-(v_{x},\nu_{x})_{h}\nu_{x}).
\end{align*}
Observe that $v_{x}'=v_{x}-(v_{x},\nu_{x})_{h}\nu_{x}$ is just the orthogonal projection of $v_{x}$ onto $T(\partial N)$. We will further decompose $v'$ in the following way. We write
\begin{equation} \label{eq:v'}
    v'=[v_{t}', v_{x}'],
\end{equation}
where $v_{t}=-(v', \partial_{t})_{g'}$, and $v_{x}'=(d\pi) v'$, where $\pi \colon M \to N$ is the projection on the second component. Since $v_{x}'$ is as before, there is no problem by using the same notation. The advantage of using \eqref{eq:v'}  is that $v'$ is written in an invariant way. 

\begin{thm} \label{thm:equivalence}
Let $(N, h)$ be a compact Riemannian manifold, let $\omega$ be a 1-form on $M$, let $\lambda>0$ be a function on $N$, and let $M=\R \times N$ be equipped with $g$ as in \eqref{eq:metric2}. Let $\rho$ be as in \eqref{eq:bound_rho}. Then we have the following.
\begin{enumerate}
    \item If
    \begin{equation} \label{eq:sca}
        \mathcal{S}_{\rho,m}(t, x,[v_{t} ', v_{x}'])=(s, y,[w_{t}', w_{x} ']),
    \end{equation}
    then
    \begin{equation} \label{eq:sca_mp}
        \mathcal{S}_{\MP}(x, v_{x}')=(y, w_{x}'),
    \end{equation}
    and,
    \begin{equation} \label{eq:act}
        \mathbb{A}(x, y)=\rho(t-s)-m^{2}T \quad \text { with } y=\exp _x^{\MP} v'
    \end{equation}
    where $v \in T_x M$ is the incoming timelike vector of mass $m$ with projection $v_{x}$, $\exp _x^{\MP}$ is the $\MP$-exponential map, $t=t(0)$, $s=t(T)$, and $\gamma \colon [0,T] \to M$ is the geodesic realizing $\mathcal{S}_{\rho}$ of energy $-m^{2}$ (see Definition \ref{defin:rho_sca}).
    \item Each one of the three quantities determines the other two: $\mathcal{S}_{\rho,m}, \mathcal{S}_{\MP}$, and $\mathbb{A}$.
\end{enumerate}
\end{thm}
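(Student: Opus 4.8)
The plan is to deduce both identities in part~(1) from the geodesic correspondence of Theorem~\ref{thm:rel_lorentz_mp}, and then to derive part~(2) by checking that the three objects are linked by invertible relations.

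First I would prove \eqref{eq:sca_mp}. Writing $p=(t,x)$, Definition~\ref{defin:rho_sca} reconstructs the incoming vector as $v=v'+c\,\nu$, where $c$ is the unique scalar making $v$ timelike of mass $m$, of momentum $(\partial_t,v)_g=\rho$, and inward pointing. Projecting to $N$ with $(d\pi)v'=v_x'$ and $(d\pi)\nu=\nu_x$ gives $v_x=v_x'+c\,\nu_x$ with $v_x'\perp_h\nu_x$, and the conditions $|v|_g^2=-m^2$, $(\partial_t,v)_g=\rho$ force $|v_x|_h^2=\rho^2/\lambda-m^2$ through \eqref{eq:2e}. Thus $c$ is exactly the normal component that the $\MP$-reconstruction of energy $-m^2/2$ assigns to the tangential datum $v_x'$ with the same inward orientation, so the two reconstructions agree. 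By Theorem~\ref{thm:rel_lorentz_mp} the $\MP$-geodesic issued from $(x,v_x)$ is the spatial projection of $\gamma$; it therefore exits at $y$ with exit velocity whose tangential projection is $w_x'$, which is \eqref{eq:sca_mp}.

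Next I would obtain \eqref{eq:act} by a direct computation. From $(\partial_t,\dot\gamma)_g=\rho$ and \eqref{eq:const2} one has $\dot t=\langle\omega,\dot x\rangle-\rho/\lambda$, so that $\rho(t-s)=-\rho\int_0^T\dot t\,dt=-\rho\int_0^T\langle\omega,\dot x\rangle\,dt+\rho^2\int_0^T\lambda^{-1}\,dt$. Since \eqref{eq:2e} gives $\rho^2/\lambda=|\dot x|_h^2+m^2$ along $x$, this yields $\rho(t-s)-m^2T=\int_0^T\bigl(|\dot x|_h^2-\rho\langle\omega,\dot x\rangle\bigr)\,dt$, which is recognized as the action $\mathbb{A}(x,y)$ of the $\MP$-geodesic $x$ at energy $-m^2/2$ as defined in Appendix~\ref{app:mp}, establishing \eqref{eq:act}.

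For part~(2) I would begin with the simplification that $\nu=(\langle\omega,\nu_x\rangle,\nu_x)$ satisfies $(\nu,\partial_t)_g=0$, so the time-components of the projected vectors are constant, $v_t'=w_t'=-\rho$. Hence $\mathcal{S}_{\rho,m}$ carries precisely the spatial map $(x,v_x')\mapsto(y,w_x')$ together with the time shift $\sigma:=s-t$, which by $t$-translation invariance depends only on $(x,v_x')$. Part~(1) then gives $\mathcal{S}_{\rho,m}\Rightarrow\mathcal{S}_{\MP}$ by forgetting $\sigma$, and $\mathcal{S}_{\rho,m}\Rightarrow\mathbb{A}$ through $\mathcal{S}_{\MP}$. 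To close the loop I would use the equivalence between the $\MP$-scattering relation and the $\MP$-action recalled in Appendix~\ref{app:mp}, together with the elementary relations $\mathbb{A}=-\rho\sigma-m^2T$ and $\sigma=\int_0^T(\langle\omega,\dot x\rangle-\rho/\lambda)\,dt$, to recover $T$ and the time shift, and hence $\mathcal{S}_{\rho,m}$, from either $\mathcal{S}_{\MP}$ or $\mathbb{A}$. I expect the main obstacle to lie exactly here: confirming that the travel time $T$ (equivalently the time shift $\sigma$) is genuinely recoverable from the $\MP$-data, so that the three maps determine one another and are not merely tied by the one-directional formulas of part~(1).
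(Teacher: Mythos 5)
Your part (1) is correct and essentially the paper's argument: \eqref{eq:sca_mp} is deduced from the correspondence of Theorem \ref{thm:rel_lorentz_mp} (your explicit check that mass $m$ and momentum $\rho$ force $|v_x|_h^2=\rho^2/\lambda-m^2$, i.e.\ energy $-m^2/2$, is a useful elaboration of what the paper leaves implicit), and your computation of the action is the same one the paper performs with \eqref{eq:const2} and \eqref{eq:2e}, just organized differently: your identity $\rho(t-s)-m^2T=\int_0^T\bigl(|\dot x|_h^2-\rho\langle\omega,\dot x\rangle\bigr)\,dt$ does equal $\A(x,y)$ for the system $(N,h,d\rho\omega,\rho^2/(-2\lambda))$ at energy $-m^2/2$, as one checks from the time-free action in Appendix \ref{app:mp}. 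Your observation that $(\nu,\partial_t)_g=0$ forces $v_t'=w_t'=-\rho$ is also correct and is sharper than anything stated in the paper's proof, where $w_t'$ is only obtained at the end.

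The genuine gap is in part (2), and you flag it yourself: you do not prove that the travel time $T$ (equivalently the time shift $\sigma=s-t$) is recoverable, and without it the implication from $\MP$-data back to $\mathcal{S}_{\rho,m}$ is incomplete, since $s=t+\sigma$ and $\sigma=-(\A(x,y)+m^2T)/\rho$ both hinge on $T$. The paper does not extract $T$ from the bare map $\mathcal{S}_{\MP}$ or from the function $\A$ at the single energy $-m^2/2$ (which indeed does not obviously yield $T$; one would want to differentiate the action in the energy, i.e.\ a second energy level). Instead, it takes the $\MP$-geodesic \emph{realizing} \eqref{eq:sca_mp}: an $\MP$-geodesic of energy $-m^2/2$ comes with a fixed parametrization, so its exit time is $T$; defining $t(\cdot)$ by \eqref{eq:const2} with $t(0)=t$ lifts it to the geodesic $\gamma$ in $(M,g)$, from which $s=t(T)$ and $w$ are read off (and $w_t'=-\rho$ in any case). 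In other words, ``determines'' in part (2) is understood with the reduced dynamics in hand, which is how the theorem is actually applied: when two systems are already known to have $\MP$-geodesics matched by a gauge transformation (cf.\ \cite{mt23}*{Lemma 4.4}), the lifted time functions, hence the relations $\mathcal{S}_{\rho,m}$, coincide. Under that reading, your framework closes by exactly this observation; under the stricter reading you adopted (reconstruction from the boundary maps alone), the step you identified as the main obstacle is left unresolved in your proposal, and is the one ingredient separating it from a complete proof.
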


Here $\mathcal{S}_{\MP}$ and $\A$ are the scattering relation and the boundary action function of energy $-m^{2}/2$ of the $\MP$-system $(N,h,d\rho \omega,\rho^{2}/(-2\lambda))$.

\begin{proof} \hfill
    \begin{enumerate}
    \item Let $v$ a vector of mass $m$ with projection $v'$ and let $\gamma$ the geodesic realizing \eqref{eq:sca}, that is, $\gamma$ goes from $(t,x)$ with velocity $v$ to $(s,y)$ with velocity $w$, and momentum $\rho$. Then, in light of Theorem \ref{thm:rel_lorentz_mp}, its spatial coordinate its a $\MP$-geodesic of energy $-m^{2}/2$ of the $\MP$-system $(N,h,d\rho \omega,\rho^{2}/(-2\lambda))$. Thus, $v_{x}'$ is the projection to $T\partial N$ of $v_{x}$, where the last vector has energy $-m^{2}/2$. By definition of the scattering relation for $\MP$-system, we conclude \eqref{eq:sca_mp}.
    
    For the action, assume that the geodesic $\gamma=(t,x)$ is defined in $[0,T]$. Then, from \eqref{eq:const2} we find
    \[ \rho \dot{t}-\langle \rho \omega,\dot{x} \rangle =\frac{\rho^{2}}{-\lambda}. \]
    This and \eqref{eq:2e} gives
    \begin{equation} \label{eq:act1}
        \rho \dot{t}-\langle \rho \omega,\dot{x} \rangle +\frac{\rho^{2}}{2\lambda}=-\frac{\rho^{2}}{2\lambda}
    \end{equation}
    Using \eqref{eq:2e} and \eqref{eq:act1}, we obtain
    \begin{equation} \label{eq:act2}
        \rho \dot{t}-\langle \rho \omega,\dot{x} \rangle +\frac{\rho^{2}}{2\lambda}=-\frac{m^{2}}{2}-\frac{|\dot{x}|_{h}^{2}}{2}.
    \end{equation}
    Rearranging terms in \eqref{eq:act2}, adding $-m^{2}/2$ to both sides, and integrating over from $0$ to $T$, we find \eqref{eq:act}.
    \item It is clear that from $\mathcal{S}_{\rho,m}$ we can obtain $\mathcal{S}_{\MP}$. That $\mathcal{S}_{\MP}$ and $\A$ determine each other is \cite{az}*{Theorem 4.3}. So, we only have to show that we can determine $\mathcal{S}_{\rho,m}$ from $\mathcal{S}_{\MP}$ and $\A$. To this end, let $(t,x,v') \in T\partial N$. Let $v$ with mass $m$ and projection to the boundary equal to $v'$. From \eqref{eq:sca_mp} we can find $y,w_{x}'$. If we consider the $\MP$-geodesic realizing \eqref{eq:sca_mp}, and define $t(\sigma)$ by \eqref{eq:const2}, with the initial condition $t(0)=t$, we obtain the geodesic $\gamma$ (in $(M,g)$) with momentum $\rho$ with initial position $(t,x)$ and initial velocity $v$. Since the parametrization of $\gamma$ is fixed, we obtain $w$, $s$ and $T$, which implies that we also have $w_{t}'$, and therefore, $S_{\rho,m}$ as well.
\end{enumerate}
\end{proof}

\section{Proof of the Theorems \ref{thm:intro1} and \ref{thm:intro2}} \label{sec:thms}

In this section we give the proofs of the main theorems presented in the introduction. We will always assume that $\rho$ satisfies \eqref{eq:cond_lam}. First, we need some preliminary results. The first one is the following observation.

\begin{lemma} \label{lemma:merging}
    The $\MP$-systems $(N,h,d\rho \omega,\rho^{2}/(-2\lambda))$ and $(N,h',d\rho \omega',\rho^{2}/(-2\lambda'))$ are $-m^{2}/2$-gauge equivalent if and only if, the $\MP$-systems $(N,h,d\omega,1/(-2\lambda))$ and $(N,h',d\omega',1/(-2\lambda'))$ are $-m^{2}/(2\rho^{2})$-gauge equivalent. Furthermore, 
\end{lemma}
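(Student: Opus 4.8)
The plan is to reduce the statement to the elementary observation that the two pairs of $\MP$-systems differ only by a global rescaling by powers of the (nonzero) constant $\rho$, under which the notion of gauge equivalence recalled in Appendix \ref{app:mp} is manifestly preserved. Recall that two $\MP$-systems $(N,h,\Omega,U)$ and $(N,h',\Omega',U')$ are \emph{$c$-gauge equivalent} when there exist a diffeomorphism $f \colon N \to N$ fixing $\partial N$, a positive $\mu \in C^{\infty}(N)$ with $\mu|_{\partial N}=1$, and a function $\varphi \in C^{\infty}(N)$ vanishing on $\partial N$, such that
\begin{equation*}
    h'=\frac{1}{\mu}f^{*}h, \qquad \Omega'=f^{*}\Omega, \qquad c-U'=\mu(c-f^{*}U),
\end{equation*}
where the second condition is realized at the potential level by the exact gauge shift (for $\Omega=d\rho\omega$ it reads $\rho\omega'=\rho f^{*}\omega+d\varphi$). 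I would begin by writing these conditions out for each of the four systems appearing in the statement.

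Next I would record the scaling dictionary. Setting $E_{A}=-m^{2}/2$ for the system $(N,h,d\rho\omega,\rho^{2}/(-2\lambda))$ and $E_{B}=-m^{2}/(2\rho^{2})$ for $(N,h,d\omega,1/(-2\lambda))$, one has $d\rho\omega=\rho\,d\omega$, the potentials satisfy $U_{A}=\rho^{2}U_{B}$, and the energy levels satisfy $E_{A}=\rho^{2}E_{B}$. Consequently
\begin{equation*}
    E_{A}-U_{A}=\rho^{2}(E_{B}-U_{B}), \qquad E_{A}-f^{*}U_{A}=\rho^{2}(E_{B}-f^{*}U_{B}),
\end{equation*}
and likewise for the primed systems. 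This is the only computation required, and it is immediate from the explicit formulas for the potentials.

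With this in hand the verification is a direct substitution. Suppose the $\rho$-systems are $E_{A}$-gauge equivalent via $(f,\mu,\varphi)$. The metric condition $h'=\frac{1}{\mu}f^{*}h$ involves neither $\rho$ nor the energy, so it holds verbatim for the $1$-systems. Dividing the potential condition $E_{A}-U_{A}'=\mu(E_{A}-f^{*}U_{A})$ by $\rho^{2}$ and using the dictionary yields exactly $E_{B}-U_{B}'=\mu(E_{B}-f^{*}U_{B})$ with the \emph{same} $f$ and $\mu$. Finally the magnetic condition $\rho\omega'=\rho f^{*}\omega+d\varphi$ is equivalent to $\omega'=f^{*}\omega+d(\varphi/\rho)$, so the $1$-systems are gauge equivalent with the rescaled function $\varphi/\rho$, which still vanishes on $\partial N$. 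Running the argument backwards---replacing $\varphi$ by $\rho\varphi$---gives the converse, so the correspondence $(f,\mu,\varphi)\leftrightarrow(f,\mu,\varphi/\rho)$ is a bijection between the two sets of gauge equivalences. This proves the ``if and only if'' and simultaneously identifies the gauge data, which I expect is the content of the ``Furthermore'' clause (the diffeomorphism and conformal factor are unchanged, and the magnetic gauge function rescales by $\rho$).

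The only genuine care required---and the step most likely to hide an error---is matching my placement of the conformal factor $\mu$ and the exact magnetic term to the precise definition in Appendix \ref{app:mp}, together with checking that the normalizations $\mu|_{\partial N}=1$ and $\varphi|_{\partial N}=0$ are inherited. Since $\rho$ is a fixed nonzero constant, multiplication or division by $\rho$ and $\rho^{2}$ preserves positivity, smoothness, and all boundary normalizations, so no analytic difficulty arises; the lemma is essentially a bookkeeping statement asserting that the gauge group acts compatibly with the scaling $(\Omega,U,E)\mapsto(\rho^{-1}\Omega,\rho^{-2}U,\rho^{-2}E)$.
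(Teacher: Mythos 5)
Your proof is correct and matches the paper's treatment: the paper states this lemma as an immediate observation with no written proof, and your direct rescaling verification --- keeping the same $f$ and $\mu$, dividing the potential identity $k-U'=\mu(k-f^{*}U)$ by $\rho^{2}$, and replacing $\varphi$ by $\varphi/\rho$ in the magnetic condition (legitimate since \eqref{eq:bound_rho} forces $\rho\neq 0$) --- is exactly the bookkeeping being taken for granted, and is consistent with how the lemma is applied in \eqref{eq:ssm_mrho_equiv_intro}. The only cosmetic mismatches are that Definition \ref{defin:equiv_mp} does not actually impose $\mu|_{\partial N}=1$ (your extra normalization is harmless, since $\mu$ is unchanged under the correspondence), and the paper's truncated ``Furthermore'' clause is plausibly the gauge-data correspondence $(f,\mu,\varphi)\leftrightarrow(f,\mu,\varphi/\rho)$ that you identify.
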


We also have:

\begin{lemma} \label{lemma:simple_equiv}
    $(N,h,d\rho \omega,\rho^{2}/(-2\lambda))$ is simple with respect to curves of energy $-m^{2}/2$, if and only if, $(N,h,d\omega,1/(-2\lambda))$ is simple with respect to curves of energy $-m^{2}/(2\rho^{2})$.
\end{lemma}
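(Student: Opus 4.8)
The plan is to reduce the statement about simplicity for the momentum-scaled system $(N,h,d\rho\omega,\rho^2/(-2\lambda))$ at energy $-m^2/2$ to the corresponding statement for the normalized system $(N,h,d\omega,1/(-2\lambda))$ at energy $-m^2/(2\rho^2)$, by checking that the two notions of simplicity encode exactly the same geometric data. Recall from Appendix \ref{app:mp} that an $\MP$-system being simple at a fixed energy level means: the boundary $\partial N$ is strictly $\MP$-convex at that energy, there are no conjugate points along any $\MP$-geodesic of that energy, and the $\MP$-exponential map at each boundary point is a diffeomorphism onto its image (equivalently, any two points are joined by a unique $\MP$-geodesic of the given energy depending smoothly on the endpoints). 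So the natural strategy is to show that the \emph{unparametrized} $\MP$-geodesics of the two systems at the two respective energy levels coincide.

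\medskip

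First I would make precise the correspondence between the two sets of $\MP$-geodesics. The $\MP$-geodesic equation for $(N,h,F,U)$ at energy $E$ is $\nabla_{\dot x}\dot x = Y(\dot x)-\nabla U$ on the level set $\tfrac12|\dot x|_h^2+U=E$, where $Y$ is the Lorentz force of the two-form $F$. For the first system $F=d\rho\omega=\rho\,d\omega$, $U=\rho^2/(-2\lambda)$, energy $E=-m^2/2$; for the second $F'=d\omega$, $U'=1/(-2\lambda)$, energy $E'=-m^2/(2\rho^2)$. The key computation is to relate the two flows by a constant time rescaling. If $x(s)$ solves the first system, set $\tilde x(\sigma)=x(s)$ with $\sigma=\rho s$ (or $s=\sigma/\rho$); then $\dot{\tilde x}=\tfrac{1}{\rho}\dot x$, so $\tfrac12|\dot{\tilde x}|_h^2 = \tfrac{1}{2\rho^2}|\dot x|_h^2$ and the energy $E=-m^2/2$ converts exactly to $E'=-m^2/(2\rho^2)$ once we also absorb the factor $\rho^2$ in the potential, since $U=\rho^2 U'$. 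A short check shows the Lorentz term $Y(\dot x)$ (which for $\rho\,d\omega$ scales as $\rho$ times the Lorentz force of $d\omega$, applied to $\dot x=\rho\dot{\tilde x}$) and the potential gradient $\nabla U=\rho^2\nabla U'$ both rescale consistently so that $\tilde x(\sigma)$ is an $\MP$-geodesic of the second system at energy $E'$. Thus the flows are conjugate by this fixed reparametrization and the \emph{images} of $\MP$-geodesics in $N$ are literally the same curves.

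\medskip

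Once this conjugacy is established, each of the three defining conditions of simplicity transfers immediately. The trace curves on $N$ being identical means $\partial N$ is strictly $\MP$-convex for one system iff for the other (convexity is a statement about the trace of boundary-tangent $\MP$-geodesics, independent of parametrization — indeed Lemma \ref{lemma:merging} already exhibits the gauge correspondence relating the two systems). Absence of conjugate points is preserved because conjugate points are a property of the geodesic flow up to reparametrization (a constant time rescaling maps Jacobi fields to Jacobi fields and vanishing to vanishing). Finally the $\MP$-exponential map of the first system at energy $-m^2/2$ differs from that of the second at energy $-m^2/(2\rho^2)$ only by the constant rescaling $v\mapsto \rho v$ on the fibers and $t\mapsto t/\rho$ on the time, hence one is a diffeomorphism iff the other is; this is essentially the content of the exponential-map lemma proved earlier in Section \ref{sec:con_exp}. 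Assembling these three equivalences gives the biconditional.

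\medskip

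The main obstacle I anticipate is purely bookkeeping rather than conceptual: one must verify that \emph{all three} scalings — of velocity, of the magnetic/Lorentz term, and of the potential — are mutually compatible under the single reparametrization $s=\sigma/\rho$, so that no stray factor of $\rho$ survives in the $\MP$-geodesic equation. The cleanest way to avoid error is to observe that simplicity depends only on the family of unparametrized oriented $\MP$-geodesics together with the convexity of the boundary, and to phrase the whole argument in terms of that reparametrization invariance, invoking Lemma \ref{lemma:merging} for the gauge/scaling dictionary so that the energy-level correspondence $-m^2/2 \leftrightarrow -m^2/(2\rho^2)$ is already recorded and need not be rederived.
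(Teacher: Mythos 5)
Your proposal is correct and follows essentially the same route as the paper: the constant reparametrization $\sigma(\tau)\mapsto\sigma(\rho\tau)$ with fiberwise scaling $v\mapsto\rho v$ matches the energy levels $-m^{2}/(2\rho^{2})\leftrightarrow -m^{2}/2$, transfers the strict $\MP$-convexity inequality \eqref{eq:stric_mp_con} (which rescales by the positive factor $\rho^{2}$), and identifies the two $\MP$-exponential maps up to this rescaling, so one is a diffeomorphism iff the other is. The only cosmetic deviation is that you list absence of conjugate points and boundary-based exponential maps as separate conditions, whereas Definition \ref{defin:simple} requires only strict $\MP$-convexity plus $\exp_{x}^{k}$ being a diffeomorphism for every $x\in N$, but this does not affect the argument.
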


\begin{proof}
In first place, observe that the energy functions which correspond to the systems $(N,h,d\rho \omega,\rho^{2}/(-2\lambda))$ and $(N,h,d\omega,1/(-2\lambda))$ are
\[ E_{\rho}(x,\theta)=\frac{1}{2}|\theta|^{2}+\frac{\rho^{2}}{-2\lambda}, \quad E(x,\xi)=\frac{1}{2}|\xi|^{2}+\frac{1}{-2\lambda}, \]
respectively. Then, $(x,\xi) \in E^{-1}(-m^{2}/(2\rho^{2}))$ if and only if $(x,\rho \xi) \in E_{\rho}^{-1}(-m^{2}/2)$. Hence, is clear that from \eqref{eq:stric_mp_con} we can obtain strictly convexity for $(N,h,d\rho \omega,\rho^{2}/(-2\lambda))$ by multiplying by $\rho^{2}$. Since this procedure is reversible, the equivalence for the strict convexity follows. 

Before dealing with the exponential maps, note that from our observation about the energy functions and \eqref{eq:mp-geo}, $\sigma(\tau)$ is a $\MP$-geodesic of energy $-m^{2}/(2\rho^{2})$ for the system $(N,h,d\omega,1/(-2\lambda))$ if and only if $\zeta=\sigma(\rho \tau)$ is a $\MP$-geodesic of energy $-m^{2}/2$ of the system $(N,h,d\rho \omega,\rho^{2}/(-2\lambda))$. From this it follows that the $\MP$-exponential map of energy $-m^{2}/(2\rho^{2})$ for the system $(N,h,d\omega,1/(-2\lambda))$ if a diffeomorphism if and only if the same is true for $\MP$-exponential map of energy $-m^{2}/2$ of the system $(N,h,d\rho \omega,\rho^{2}/(-2\lambda))$. This finishes the proof.
\end{proof}

\begin{proof}[Proof of Theorem \ref{thm:intro1}]
 Let $\mathcal{S}_{\MP_{\rho}}$ and $\mathcal{S}_{\MP_{\rho}}'$ be the scattering relation of the systems $(N,h,d\rho \omega,\rho^{2}/(-2\lambda))$ and $(N,h',d\rho \omega',\rho^{2}/(-2\lambda'))$ at energy level $-m^{2}/2$, respectively. From Theorem \ref{thm:equivalence}, we have that $\mathcal{S}_{\MP_{\rho}}=\mathcal{S}_{\MP_{\rho}}'$. Hence, the result assuming that we only have information for one $\rho$, follows from Theorem \ref{thm:MP_rig}.    

For the second part, again by Theorem \ref{thm:equivalence}, we have that $\mathcal{S}_{\MP_{\rho_{i}}}=\mathcal{S}_{\MP_{\rho_{i}}}'$ for $i=1,2$ and energy $-m^{2}/2$. Then, by Theorem \ref{thm:MP_rig_2energy}, the systems $(N,h,d\rho_{i} \omega,\rho_{i}^{2}/(-2\lambda))$ and $(N,h',d\rho_{i} \omega',\rho_{i}^{2}/(-2\lambda'))$ are $-m^{2}/2$-gauge equivalent. By Lemmas \ref{lemma:merging}, \ref{lemma:simple_equiv}, we conclude that $(N,h,d\omega,1/(-2\lambda))$ and $(N,h',d\omega',1/(-2\lambda'))$ are $-m^{2}/(2\rho_{i}^{2})$-gauge equivalent and simple with respect to curves of energy $-m^{2}/(2\rho_{i}^{2})$. In light of \cite{mt23}*{Lemma 4.4}, these systems have the same $\MP$ scattering relation for two levels of energy. Hence, by Theorem \ref{thm:MP_rig_2energy}, the systems $(N,h,d\omega,1/(-2\lambda))$ and $(N,h',d\omega',1/(-2\lambda'))$ are gauge equivalent, which gives \eqref{eq:ssm_mrho_equiv_intro} with $\mu=1$.
\end{proof}

The proof of Theorem \ref{thm:intro2} is a verbatim of the proof of Theorem \ref{thm:intro1}, using Theorem \ref{thm:MP_gen} instead of Theorem \ref{thm:MP_rig}, and Theorem \ref{thm:MP_gen_2e} instead of Theorem \ref{thm:MP_rig_2energy}.

\begin{rmk}
Note that if $g=g_{h,\omega,\lambda}$ and $g'=g_{h',\omega',\lambda'}$, with $h|_{\partial M}=h'|_{\partial M}$, $i^{*}\omega=i^{*}\omega'$ and $\lambda|_{\partial M}=\lambda'|_{\partial M}$, $g$ and $g'$ as in \eqref{eq:ssm_mrho_equiv_intro}, then $\mathcal{S}_{\rho,m}=\mathcal{S}_{\rho,m}'$. Indeed, if $g'$ has that form, then $(N,h,d\rho \omega,\rho^{2}/(-2\lambda))$ and $(N,h',d\rho \omega',\rho^{2}/(-2\lambda'))$ are $-m^{2}/2$-gauge equivalent as $\MP$-systems. By \cite{mt23}*{Lemma 4.4}, $\mathcal{S}_{\MP}=\mathcal{S}_{\MP}'$. Hence, by Theorem \ref{thm:equivalence}, we obtain $\mathcal{S}_{\rho,m}=\mathcal{S}_{\rho,m}$.
\end{rmk}

\begin{rmk} \label{rmk:sharp}
    We would like to point out that the results are sharp in the following sense. We claim that if $g=g_{h,\omega,\lambda}$ and $g'=g_{h',\omega',\lambda'}$ with $h,h',\omega,\omega',\lambda,\lambda'$ as in \eqref{eq:ssm_mrho_equiv_intro} with $\mu=1$, that is, 
    \begin{equation} \label{eq:equiv_ssm}
        h'=f^{*}h, \quad \omega'=f^{*}\omega+d\varphi, \quad \frac{1}{\lambda'}=\frac{1}{f^{*}\lambda},
    \end{equation}
    and $h|_{\partial M}=h'|_{\partial M}$, $i^{*}\omega=i^{*}\omega'$ and $\lambda|_{\partial M}=\lambda'|_{\partial M}$, then $\mathcal{S}_{\rho,m}=\mathcal{S}_{\rho,m}'$ for all $\rho$. Indeed, the associated $\MP$-systems are $(N,h,d\rho \omega,\rho^{2}/(-2\lambda))$ and $(N,h,d\rho \omega',\rho^{2}/(-2\lambda'))$. By \eqref{eq:equiv_ssm}, these systems are $-m^{2}/2$-gauge equivalent with $\mu=1$ (see Definition \ref{defin:equiv_mp}). Then, by \cite{mt23}*{Lemma 4.4}, the systems have the same scattering relation. By Theorem \ref{thm:equivalence}, we conclude that $\mathcal{S}_{\rho,m}=\mathcal{S}_{\rho,m}'$. Since this is valid for every $\rho$, we obtain the result.
\end{rmk}

Finally we give an interpretation of results on Theorem \ref{thm:intro1} and \ref{thm:intro2} in terms of metrics of the form \eqref{eq:metric2}: on standard stationary manifolds, the scattering relation, even for more that two different values of $\rho$, is not capable of distinguish between $g=g_{h,\omega,\lambda}$ and $\Psi^{*}g$, where 
\begin{align*}
    \Psi \colon M &\to M, \\
    (t,x) & \mapsto (t+\varphi(x),f(x)),
\end{align*}
where $f \colon N \to N$ is a diffeomorphism with $f|_{\partial M}=id|_{\partial M}$ and $\varphi$ is a smooth function vanishing on the boundary of $\partial N$.

\section{MP-systems} \label{app:mp}

In this appendix we recall some notions and results from \cite{az}, \cite{mt23}, and \cite{mt24}. 

An $\MP$-system consist of a smooth compact Riemannian manifold with smooth boundary $(N,h)$, a closed 2-form $\Omega$, and a smooth function $U$. Given an $\MP$-system $(N,h,\Omega,U)$, the magnetic field $\Omega$ induces a map $Y \colon TN \to TN$ given by
\[ \Omega_{x}(u,v)=(Y_{x}u,v)_{h}, \]
where $u,v \in T_{x}N$. This map is usually called \emph{Lorentz force} associated to the magnetic field $\Omega$. $C^{2}$ curves $\sigma \colon [a,b] \to N$ that satisfy
\begin{equation} \label{eq:mp-geo}
    \nabla_{\dot{\sigma}} \dot{\sigma}=Y(\dot{\sigma})-\nabla U(\sigma),
\end{equation}
are called \emph{$\MP$-geodesics}. Here, $\nabla$ is the Levi-Civita connection associated to the metric $h$. Equation \eqref{eq:mp-geo} defines a flow , called the \emph{$\MP$-flow}, and given by
\[ \phi_{t}(x,v)=(\sigma(t),\dot{\sigma}(t)),  \]
where $\sigma$ solves \eqref{eq:mp-geo} and $\sigma(0)=x$, $\dot{\sigma}(0)=v$. For the $\MP$-flow the energy $E \colon TN \to \R$ given by $E(x,v)=\frac{1}{2}|v|_{g}^{2}+U(x)$ is an integral of motion. Curves satisfying  \eqref{eq:mp-geo} and with $E(\sigma,\dot{\sigma}) \equiv k$ are called $\MP$-geodesics of energy $k$. Given $k \in \R$, we define $S^{k}N:=\{E=k\}$. We will assume always that $k>\max_{M} U$. Let $\tilde{\nu}(x)$ be the inward unit vector normal to $\partial N$ at $x$, and set
\[
\partial_{\pm} S^{k}N:=\{(x, v) \in S^{k}N: x \in \partial N, \pm (v, \tilde{\nu}(x))_{h(x)} \geq 0\}.
\]
We denote by $\Pi$ the second fundamental form. We say that $\partial M$ is \emph{strictly $\MP$-convex} if
\begin{align} \label{eq:stric_mp_con}
    \Pi(x, \xi)>(Y_x(\xi), \tilde{\nu}(x))_{h}-d_x U(\nu(x))
\end{align}
for all $(x, \xi) \in S^k(\partial M)$. For $x \in N$, the $\MP$-exponential map at $x$ of energy $k$ is the map $\exp _x^{k} \colon T_x N \to N$ given by
\[
\exp _x^{k}(t v)=\pi \circ \phi_t(v),
\]
where $t \geq 0, v \in S_x^k N$, and $\pi: T M \rightarrow M$ is the base point projection.

\begin{defin} \label{defin:simple}
We say that $N$ is ($\MP$-)\emph{simple} with respect to $(h, \Omega, U)$ if $\partial N$ is strictly $\MP$-convex and the $\MP$-exponential map $\exp_{x}^{k} \colon (\exp_x^{k})^{-1}(N) \to N$ is a diffeomorphism for every $x \in N$.   
\end{defin}

In this case, $\Omega$ is exact, that is, there exist a 1-form $\alpha$ such that $\Omega=d\alpha$, and we call $\alpha$ to be the magnetic potential. Henceforth we call $(g, \alpha, U)$ a simple $\MP$-system on $M$. We will also say that $(M, g, \alpha, U)$ is a simple $\MP$-system.

For $(x, v) \in \partial_{+} S^{k}N$, let $\tau(p, v)$ be the time exit time function for the $\MP$-geodesic $\sigma$ with $\sigma(0)=p$, $\dot{\sigma}(0)=v$. By \cite{az}*{Lemma~A.3} we have that for a simple $\MP$-system, the function $\tau \colon \partial_{+} S^{k}N \to \R$ is smooth.

\begin{defin} \label{defin:scattering}
First consider the map $\hat{\mathcal{S}} \colon \partial_{+} S^{k}N \to \partial_{-} S^{k}N$ for an $\MP$-system $(N, g, \alpha, U)$ is defined as
\[
\hat{\mathcal{S}}(p,v)=(\phi_{\tau (p, v)}(p, v))=(\sigma(\tau(p,v)), \dot{\sigma}(\tau(p,v))).
\]    
The \emph{scattering relation} relation of energy $k$ of this system is 
\[ \mathcal{S}_{\MP}(p,v')=(\sigma(\tau(p,v)), \dot{\sigma}(\tau(p,v))'), \]
where the $'$ denotes the projection of the vector to $T\partial M$.
\end{defin}

\begin{rmk} \label{rmk:sca_proj}
Note that this scattering relation is a slightly different to the one presented in \cites{az, mt23, mt24}, because of the projection on the boundary. Defining the scattering in this way make it invariant under the group of transformations from Definition \ref{defin:equiv_mp} below. More specifically, the diffeomorphism appearing in the group of transformations could possible change the normal vector, so, projecting to the boundary, we avoid that problem. In this way is defined in the Riemannian setting \cites{su08, su08.2, stefanov08} and in the Lorentzian setting \cites{plamen, plamen24}.
\end{rmk}

It has been shown that $\MP$-geodesics minimize the \emph{time free action} of energy $k$ (see \cite{az}*{Appendix A.1})
\[ \A(\sigma)=\frac{1}{2} \int_{0}^{T}|\dot{\sigma}|_{h}^{2}dt+kT-\int_{0}^{T}(\alpha(\sigma(t),\dot{\sigma}(t))+U(\sigma(t)) )dt, \]
where 
\[ \sigma \in \mathcal{C}(x,y)=\{\sigma \in AC([0,T],N): \sigma(0)=x \text{ and } \sigma(T)=y\}, \]
so that the \emph{Ma\~n\'e action potential} (of energy $k$) is well defined
\begin{equation} \label{eq:mane_act}
    \A(x,y)=\inf_{\gamma \in \mathcal{C}(x,y)}\A(\gamma).
\end{equation}
The \emph{boundary action function} (of energy $k$) is defined as
\begin{equation} \label{eq:bda}
    \A|_{\partial N \times \partial N}
\end{equation}

\begin{defin} \label{defin:equiv_mp} \hfill 
\begin{enumerate}
    \item We say that two $\MP$-systems $(h, \alpha, U)$ and $(h', \alpha', U')$ are $k$-\emph{gauge equivalent} if there is a diffeomorphism $f\colon N \to N$ with $f|_{\partial N}=id_{\partial N}$, a smooth function $\varphi \colon N \to \R$ with $\varphi|_{\partial N}=0$, and a strictly positive function $\mu \in C^{\infty}(N)$, such that $h'=\frac{1}{\mu}f^{*}h$, $\alpha'=f^* \alpha+d \varphi$ and $U'=\mu (f^{*}U-k)+k$. 
    \item We say that two $\MP$-systems $(h, \alpha, U)$ and $(h', \alpha', U')$ are \emph{gauge equivalent} if there is a diffeomorphism $f\colon N \to N$ with $f|_{\partial N}=id_{\partial N}$, a smooth function $\varphi \colon N \to \R$ with $\varphi|_{\partial N}=0$, so that $h'=f^{*}h$, $\alpha'=f^* \alpha+d \varphi$ and $U'=f^{*}U$. 
\end{enumerate}
\end{defin}

We also have the following theorem, which is a combination of the results in \cite{mt23}.

\begin{thm} [\cite{mt23}] \label{thm:MP_rig}
Let $(h, \alpha, U)$ and $(h', \alpha', U')$ be simple $\MP$-systems on $N$ with the same boundary action functions for energy $k$. Assume one of the following conditions
\begin{enumerate}
    \item $h'$ is conformal to $h$;
    \item The $\MP$-systems are real-analytic;
    \item $\dim N=2$.
\end{enumerate}
Then, the $\MP$-systems are $k$-gauge equivalent. Furthermore, in (1) we have that the diffeomorphism is just $id_{M}$. The same is true if we require the systems to have the same scattering relation of energy $k$ instead of the same boundary action function of energy $k$, with the additional assumption that $h|_{\partial M}=h'|_{\partial M}$, $i^* \omega=i^* \omega'$, $\lambda|_{\partial M}=\lambda'|_{\partial M}$, where $i$ is the embedding $i \colon \partial M \to M$.
\end{thm}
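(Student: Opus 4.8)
The plan is to reduce the scattering-relation formulation to the boundary-action-function formulation, and then to establish the gauge equivalence in each of the three cases (1)--(3) by the rigidity machinery for simple $\MP$-systems. First I would recall that, by \cite{az}*{Theorem 4.3} (the same result invoked in the proof of Theorem \ref{thm:equivalence}), the scattering relation $\mathcal{S}_{\MP}$ and the Ma\~n\'e action potential $\A|_{\partial N \times \partial N}$ determine one another once the boundary data are fixed. Hence, under the additional boundary hypotheses $h|_{\partial N}=h'|_{\partial N}$ and the agreement of the remaining boundary data, equality of the scattering relations is equivalent to equality of the boundary action functions, and it suffices to prove the theorem in the action-function formulation.

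Next I would carry out boundary determination: the boundary action function $\A(x,y)$, for $x,y\in\partial N$ close together, determines the full jet of $(h,\alpha,U)$ along $\partial N$. This is obtained by expanding $\A$ along the short $\MP$-geodesics joining nearby boundary points; strict $\MP$-convexity (part of simplicity) guarantees that these geodesics remain near the boundary and are nondegenerate, so the coefficients of the expansion recover the normal derivatives of the data to all orders. This step is what allows the gauge to be normalized at the boundary, i.e. $\mu|_{\partial N}=1$, $\varphi|_{\partial N}=0$, and $f|_{\partial N}=\mathrm{id}$.

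With the boundary jets in hand I would perform the interior reconstruction case by case. In case (1), where $h'$ is conformal to $h$, I would use an integral identity of Mukhometov--Santal\'o type together with the equality of the action functions to force the conformal factor to be constant and equal to its (matching) boundary value; since aligning conformally related metrics with equal action requires no reparametrization, the diffeomorphism can be taken to be $\mathrm{id}$, which is the sharpened conclusion asserted for (1). In case (2), the boundary jet determination serves as the seed for an analytic-continuation argument: since the data are real-analytic, agreement of all boundary derivatives propagates equality of the systems into the interior after fixing the diffeomorphism gauge through a boundary-normal-coordinate construction. In case (3), $\dim N=2$, I would invoke the two-dimensional transport-equation and attenuated-ray-transform techniques, adapted from the Pestov--Uhlmann approach to the $\MP$-flow, which yield injectivity of the relevant ray transforms and hence the $k$-gauge equivalence.

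The step I expect to be the main obstacle is the correct bookkeeping of the gauge group throughout. The systems are determined only up to the diffeomorphism $f$, the exact shift $d\varphi$ of the magnetic potential, and the conformal factor $\mu$, and each reconstruction step must both respect and ultimately pin down exactly this freedom. In particular, reconciling the diffeomorphism produced by the interior argument with the identity imposed on the boundary, and showing that the potential $\alpha$ is recovered only modulo an exact form, is delicate in the analytic and two-dimensional cases; the conformal case (1) is the cleanest precisely because the gauge collapses to $f=\mathrm{id}$.
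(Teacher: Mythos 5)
First, a point of calibration: the paper contains no proof of Theorem \ref{thm:MP_rig} at all --- it is imported verbatim from \cite{mt23} (``a combination of the results in \cite{mt23}''), so your attempt can only be compared with the argument given there, which follows the magnetic template of \cite{dpsu} but with one crucial extra move: the Maupertuis-principle reduction. At fixed energy $k$, the time-free action of the $\MP$-system $(h,\alpha,U)$ coincides with the action of the \emph{magnetic} system whose metric is the Jacobi metric $g=2(k-U)h$ with magnetic form $d\alpha$; all rigidity is then proved at the level of $(g,d\alpha)$, and unpacking $g'=f^{*}g$ into $(h',U')$ is exactly what produces the conformal factor $\mu$ in the $k$-gauge equivalence of Definition \ref{defin:equiv_mp}. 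Your first two steps (scattering $\Rightarrow$ action via \cite{az}*{Theorem 4.3} under the boundary agreement hypotheses, then boundary-jet determination from short geodesics) are consistent with this scheme.

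The genuine gap is in your case (1), where you claim the equality of the action functions forces the conformal factor between $h$ and $h'$ to be constant equal to $1$. This is false at a single energy level, and it contradicts the very statement you are proving: $k$-gauge equivalence with $f=\mathrm{id}$ still permits $h'=\frac{1}{\mu}h$ with \emph{nonconstant} $\mu$, compensated by $U'=\mu(U-k)+k$ and $\alpha'=\alpha+d\varphi$. Concretely, for any positive $\mu\in C^{\infty}(N)$ with $\mu|_{\partial N}=1$, the system $\bigl(\frac{1}{\mu}h,\ \alpha,\ \mu(U-k)+k\bigr)$ is $k$-gauge equivalent to $(h,\alpha,U)$, hence has the same boundary action function at energy $k$ (\cite{mt23}*{Lemma 4.4}), and its metric is conformal to $h$; so no Mukhometov--Santal\'o identity applied directly to $h,h'$ can output $\mu\equiv 1$. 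This invariance is invisible to single-energy data precisely because the Jacobi metrics $2(k-U)h$ and $2(k-U')h'$ coincide; the correct conformal-case argument applies the conformal magnetic rigidity of \cite{dpsu} to the Jacobi metrics (which are conformal to each other whenever $h'$ is conformal to $h$), concluding $2(k-U')h'=2(k-U)h$ and $\alpha'=\alpha+d\varphi$ --- i.e., exactly the $k$-gauge relation with $f=\mathrm{id}$ and, in general, $\mu\neq 1$. Pinning down $\mu=1$ requires data at two energy levels (Theorem \ref{thm:MP_rig_2energy}, \cite{az}), which is why Theorem \ref{thm:intro1} needs two values of $\rho$ and why Remark \ref{rmk:sharp} asserts sharpness. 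The same miscalibration propagates into your cases (2) and (3): the analytic continuation and the two-dimensional argument must likewise be run on the Jacobi/magnetic data and can only ever yield $k$-gauge (not strict) equality of $(h,U)$; moreover, in (3) injectivity of a linearized ray transform does not by itself give the nonlinear statement --- in \cites{dpsu,az,mt23} the two-dimensional case is handled by reduction to the conformal case, not by s-injectivity alone.
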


Assuming the knowledge of the boundary action function for two energy levels, Y. Assylbekov and H.Zhou obtained the following result:

\begin{thm}[\cite{az}] \label{thm:MP_rig_2energy}
   Let $(h, \alpha, U)$ and $(h', \alpha', U')$ be simple $\MP$-systems on $N$ with the same boundary action functions for two energy levels. Assume one of the following conditions
\begin{enumerate}
    \item $h'$ is conformal to $h$;
    \item The $\MP$-systems are real-analytic;
    \item $\dim N=2$.
\end{enumerate}
Then, the $\MP$-systems are gauge equivalent. Furthermore, in (1) we have that the diffeomorphism is just $id_{M}$. The same is true if we require the systems to have the same scattering relation of energy for two energy levels.
\end{thm}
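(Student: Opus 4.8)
The plan is to reduce the two-energy statement to two applications of the single-energy result, Theorem \ref{thm:MP_rig}, and then to exploit the fact that the potential transforms in an energy-dependent way to eliminate the conformal factor. Fix the two energy levels $k_1\neq k_2$. Applying Theorem \ref{thm:MP_rig} at $k=k_1$ produces a diffeomorphism $f_1$ fixing $\partial N$, a positive $\mu_1\in C^\infty(N)$ with $\mu_1|_{\partial N}=1$, and a $\varphi_1$ vanishing on $\partial N$ with
\begin{equation*}
  h'=\tfrac{1}{\mu_1}f_1^*h,\qquad \alpha'=f_1^*\alpha+d\varphi_1,\qquad U'=\mu_1(f_1^*U-k_1)+k_1,
\end{equation*}
and likewise at $k=k_2$ data $(f_2,\mu_2,\varphi_2)$ satisfying the analogous identities. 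When only the scattering relations at the two energies are assumed equal, one first passes to equality of the corresponding boundary action functions via \cite{az}*{Theorem 4.3}, exactly as in the single-energy case.

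The key observation is that once the two diffeomorphisms coincide, say $f_1=f_2=:f$, everything collapses: the two expressions $h'=\mu_i^{-1}f^*h$ force $\mu_1=\mu_2=:\mu$, the two expressions for $\alpha'$ force $d\varphi_1=d\varphi_2$ (hence $\varphi_1=\varphi_2$, since both vanish on $\partial N$), and subtracting the two expressions for $U'$ gives $(1-\mu)(k_1-k_2)=0$. As $k_1\neq k_2$ this yields $\mu\equiv 1$, so that $h'=f^*h$, $\alpha'=f^*\alpha+d\varphi$ and $U'=f^*U$, which is precisely gauge equivalence in the sense of Definition \ref{defin:equiv_mp}. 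In case (1), where $h'$ is conformal to $h$, Theorem \ref{thm:MP_rig} already gives $f_1=f_2=\mathrm{id}$, so the argument applies verbatim and returns the identity diffeomorphism, as claimed.

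The main obstacle is therefore the reconciliation of the diffeomorphisms in the non-conformal cases (2) and (3). The two relations $h'=\mu_i^{-1}f_i^*h$ show that $f_1$ and $f_2$ differ by a diffeomorphism $\Psi$ of $N$ fixing $\partial N$ and conformal with respect to $h$, i.e.\ $\Psi^*h=c\,h$ with $c|_{\partial N}=1$; one must show such a $\Psi$ is the identity on a simple manifold. I would establish this by a boundary-determination argument: the simplicity hypothesis together with the normalization $h|_{\partial N}=h'|_{\partial N}$ lets one read off the full jet of the conformal class of the metric along $\partial N$, pinning down $\Psi$ and its normal derivatives there, and then propagate the equality $\Psi=\mathrm{id}$ inward along $\MP$-geodesics, exactly the mechanism already used to fix the diffeomorphism in the proof of Theorem \ref{thm:MP_rig} in \cite{mt23}. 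An alternative route that avoids comparing the two gauges is to reformulate the energy-$k$ dynamics, via Maupertuis' principle, as a magnetic flow associated with the Jacobi-type metric $2(k-U)h$ (recall $k>\max_N U$); the two energy levels then provide two such metrics whose pointwise ratio $(k_2-U)/(k_1-U)$ determines $U$, and hence $h$, separately, reducing the question to the purely magnetic rigidity already available. In either approach, once $f_1=f_2$ is secured, the remaining step is the one-line computation of the previous paragraph.
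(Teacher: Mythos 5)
Your skeleton---apply Theorem \ref{thm:MP_rig} at each energy $k_1\neq k_2$ and then run the subtraction $(1-\mu)(k_1-k_2)=0$---is the right one, and your treatment of case (1) is complete, since there the single-energy theorem already forces both diffeomorphisms to be the identity. But in cases (2) and (3) there is a genuine gap, exactly where you locate it yourself: nothing you cite or sketch actually proves $f_1=f_2$. The claim that a conformal self-diffeomorphism $\Psi$ of a simple manifold with $\Psi|_{\partial N}=\mathrm{id}$ and conformal factor $1$ on $\partial N$ must be the identity is a nontrivial rigidity statement that appears nowhere in the cited machinery, and your proposed mechanism of ``propagating $\Psi=\mathrm{id}$ inward along $\MP$-geodesics'' is circular: the $\MP$-geodesics of the two systems are only identified with one another \emph{after} the gauge has been matched, which is precisely what is to be proved. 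The Maupertuis alternative has the same circularity: the ``pointwise ratio'' $(k_2-U)/(k_1-U)$ of the two Jacobi-type metrics only makes sense once a \emph{single} diffeomorphism has been factored out of both energy levels; if $f_1\neq f_2$ there is no well-defined ratio to take.

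The argument of \cite{az} (the present paper quotes the theorem from there rather than reproving it) closes this gap with a small twist that your proposal is missing, and which avoids any conformal-automorphism rigidity. After the first application at $k_1$, replace $(h,\alpha,U)$ by the pulled-back system $(\tilde h,\tilde\alpha,\tilde U)=(f_1^{*}h,\,f_1^{*}\alpha+d\varphi_1,\,f_1^{*}U)$. This is a gauge equivalence in the strong sense of Definition \ref{defin:equiv_mp}, so it preserves simplicity and the boundary action function at \emph{every} energy; hence $(\tilde h,\tilde\alpha,\tilde U)$ and $(h',\alpha',U')$ still share boundary action functions at both $k_1$ and $k_2$, while the $k_1$-conclusion now reads $h'=\mu_1^{-1}\tilde h$, $\alpha'=\tilde\alpha$, $U'=\mu_1(\tilde U-k_1)+k_1$. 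In particular $h'$ \emph{is conformal} to $\tilde h$, so the second application of Theorem \ref{thm:MP_rig}, at energy $k_2$, falls under case (1) regardless of whether the original hypothesis was (2) or (3), and therefore delivers the identity diffeomorphism: $h'=\mu_2^{-1}\tilde h$, $U'=\mu_2(\tilde U-k_2)+k_2$. Now both gauges involve the same (trivial) diffeomorphism, your own one-line computation gives $\mu_1=\mu_2$ and then $(1-\mu)(k_1-k_2)=0$, hence $\mu\equiv1$, i.e.\ $h'=f_1^{*}h$, $\alpha'=f_1^{*}\alpha+d\varphi_1$, $U'=f_1^{*}U$, which is gauge equivalence. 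The passage from scattering data to action functions via \cite{az}*{Theorem 4.3} is handled as you say (and the boundary agreement needed there is automatic at two energies, by \cite{az}*{Lemma 4.1}). With that single modification your proof is correct and coincides with the cited one.
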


\begin{rmk}
On the hypothesis of Theorem \ref{thm:MP_rig_2energy}, we do not need that the quantities agree on the boundary because this is already satisfied, since the boundary action functions agree on two energy levels, see \cite{az}*{Lemma 4.1}.
\end{rmk}

\begin{rmk}
    On hypothesis about simplicity on theorems \ref{thm:MP_rig} and \ref{thm:MP_rig_2energy}, one, of course, only requires the conditions from Definition \ref{defin:simple} for the energy levels that are involved.
\end{rmk}

Let $I$ be the $\MP$-ray transform, that is, the linearization of the $\MP$-boundary action function \eqref{eq:bda}. For a fixed manifold $N$, we define $\mathcal{G}^{m}$ to be the set of simple $C^m$ systems $(h, \alpha,U)$ with s-injective $\MP$-ray transform $I$. The author obtained the following generic local boundary rigidity result for $\MP$-systems:

\begin{thm}[\cite{mt24}*{Theorem 7.5}] \label{thm:MP_gen}
    There exists $m$ big enough such that for every $(h_0, \alpha_0, U_{0}) \in \mathcal{G}^m$, there is $\eps>0$ such that for any two $\MP$-systems $(h, \alpha, U)$, $(h, \alpha,U)$ with
    \begin{align*}
        \|h-h_0\|_{C^m(N)}+\|\alpha-\alpha_{0}\|_{C^m(N)}+\|U-U_{0}\|_{C^{m}(N)} &\leq \eps, \\
        \|h'-h_0\|_{C^m(N)}+\|\alpha'-\alpha_{0}\|_{C^m(N)}+\|U'-U_{0}\|_{C^{m}(N)} &\leq \eps,
    \end{align*}
    we have the following:
    \[\mathbb{A}_{h, \alpha,U}=\mathbb{A}_{h', \alpha',U'} \quad \text { on } \partial N \times \partial N, \]
    (where the boundary action functions are of energy $k$), implies that $(h, \alpha,U)$ and $(h', \alpha',U')$ are $k$-gauge equivalent.
\end{thm}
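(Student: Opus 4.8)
The plan is to follow the Stefanov--Uhlmann scheme for generic local boundary rigidity, adapted from the Riemannian setting to $\MP$-systems. The starting point is a boundary determination step: one first shows that the boundary action function $\mathbb{A}|_{\partial N \times \partial N}$ of energy $k$ determines the full jet of $(h,\alpha,U)$ along $\partial N$. Combined with the freedom of a gauge transformation (a boundary-fixing diffeomorphism $f$, the potential change $\alpha \mapsto \alpha + d\varphi$, and the conformal factor $\mu$ of Definition \ref{defin:equiv_mp}), this lets me normalize the two systems so that they agree to infinite order at $\partial N$; after this reduction it suffices to show that the difference of the two systems in the interior is pure gauge.

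Next I would linearize. At a simple system the derivative of the map $(h,\alpha,U) \mapsto \mathbb{A}|_{\partial N \times \partial N}$ equals, modulo the gauge directions, the $\MP$-ray transform $I$ introduced above. By hypothesis $(h_0,\alpha_0,U_{0}) \in \mathcal{G}^m$, so $I$ is s-injective at the background. The analytic heart of the argument is to promote this s-injectivity to a stability estimate of the form $\|f^{s}\| \le C\,\|I f\|$ on solenoidal representatives, using that the normal operator $I^{*}I$ is an elliptic pseudodifferential operator modulo the gauge decomposition. Because s-injectivity is an open condition and $I^{*}I$ depends continuously on the system in $C^m$, a Fredholm-continuity argument yields the \emph{same} estimate, with a uniform constant $C$, for every $\MP$-system in a fixed $C^m$-ball around $(h_0,\alpha_0,U_{0})$; this is what fixes the threshold $m$ and the radius $\eps$.

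Finally I would pass from the linear estimate to the nonlinear conclusion. Interpolate between the two boundary-normalized systems by a path $(h_\theta,\alpha_\theta,U_\theta)$, $\theta\in[0,1]$, and write the difference $\mathbb{A}_{h',\alpha',U'}-\mathbb{A}_{h,\alpha,U}$ of boundary action functions, which vanishes, as an integral in $\theta$ of $I_\theta$ applied to the $\theta$-derivative of the path (a fundamental-theorem-of-calculus identity valid because each intermediate system stays simple). Gauge-fixing at each $\theta$ to pass to solenoidal representatives and feeding in the uniform stability estimate of the previous step produces an a priori bound forcing the solenoidal part of the difference to vanish along the path; hence the two systems differ only by the gauge of Definition \ref{defin:equiv_mp}, i.e. they are $k$-gauge equivalent. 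The main obstacle is the middle step: establishing the stability estimate together with its \emph{uniformity} over a $C^m$-ball, which rests on the microlocal analysis of $I^{*}I$ and on carefully tracking how much regularity the perturbation argument (and the implicit-function/bootstrapping step turning the a priori estimate into rigidity) consumes---precisely the bookkeeping that dictates how large $m$ must be.
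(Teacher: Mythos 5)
The paper does not prove this statement at all: it is imported verbatim from \cite{mt24}*{Theorem 7.5} (with $\mathcal{G}^m$ defined, as here, to be the simple $C^m$ systems with s-injective $\MP$-ray transform), so there is no internal argument to compare yours against. Your sketch reconstructs precisely the Stefanov--Uhlmann scheme on which that cited proof is based---boundary determination of the jet of $(h,\alpha,U)$ modulo gauge, s-injectivity of $I$ promoted via ellipticity of $I^{*}I$ to a stability estimate with constant uniform over a $C^m$-ball, and a nonlinear-to-linear bootstrap in which your path/fundamental-theorem-of-calculus identity is just the standard device for producing the quadratic remainder estimate $\lVert I f\rVert \leq C\lVert f\rVert^{2}$ that, combined with the uniform stability estimate and the gauge normalization, forces the solenoidal part to vanish for small $\eps$---so it is essentially the same approach as the source.
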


Since for simple $\MP$-systems, $\A$ and $\mathcal{S}$ determine each other (\cite{az}*{Theorem 4.3}), the same result is true if we change $\A$ by $\mathcal{S}$. We can improve the results from Theorem \ref{thm:MP_gen} by assuming equality of the scattering relation for two levels of energy:

\begin{thm} \label{thm:MP_gen_2e}
    Assume that $h|_{\partial M}=h'|_{\partial M}$, $U|_{\partial M}=U'|_{\partial M}$ and $i^* \alpha=i^* \alpha'$ (where $i$ is the embedding $i \colon \partial M \to M$). There exists $m$ big enough such that for every $(h_0, \alpha_0, U_{0}) \in \mathcal{G}^m$, there is $\eps>0$ such that for any two $\MP$-systems $(h, \alpha, U)$, $(h, \alpha,U)$ with
    \begin{align*}
        \|h-h_0\|_{C^m(N)}+\|\alpha-\alpha_{0}\|_{C^m(N)}+\|U-U_{0}\|_{C^{m}(N)} &\leq \eps, \\
        \|h'-h_0\|_{C^m(N)}+\|\alpha'-\alpha_{0}\|_{C^m(N)}+\|U'-U_{0}\|_{C^{m}(N)} &\leq \eps,
    \end{align*}
    we have the following:
    \[\mathcal{S}_{h, \alpha,U}=\mathcal{S}_{h', \alpha',U'}, \]
    for two energy levels, implies that $(h, \alpha,U)$ and $(h', \alpha',U')$ are gauge equivalent.
\end{thm}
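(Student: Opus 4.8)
The plan is to upgrade the single–energy generic rigidity of Theorem \ref{thm:MP_gen} from $k$-gauge equivalence to genuine gauge equivalence by exploiting the second energy level, following the scheme behind Assylbekov--Zhou's Theorem \ref{thm:MP_rig_2energy} but in the generic local regime. Denote the two energy levels by $k_1 \neq k_2$. Since both systems are $C^m$-close to the simple reference $(h_0,\alpha_0,U_0) \in \mathcal{G}^m$, for $\eps$ small they are themselves simple, so by \cite{az}*{Theorem 4.3} equality of the scattering relations at a given energy is equivalent to equality of the corresponding boundary action functions; I will pass freely between the two formulations.

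First I would apply Theorem \ref{thm:MP_gen} at the energy level $k_1$. This produces a $k_1$-gauge equivalence: a diffeomorphism $f \colon N \to N$ with $f|_{\partial N} = \mathrm{id}$, a function $\varphi$ vanishing on $\partial N$, and a positive $\mu \in C^\infty(N)$ with $\mu|_{\partial N}=1$, such that $h' = \frac{1}{\mu} f^* h$, $\alpha' = f^* \alpha + d\varphi$, and $U' = \mu(f^* U - k_1) + k_1$. Replacing $(h,\alpha,U)$ by its pullback $f^*(h,\alpha,U)$ — which is again simple and, by \cite{mt23}*{Lemma 4.4}, carries the same scattering relation as $(h,\alpha,U)$ at every energy, in particular at $k_2$ — I may assume without loss of generality that $f=\mathrm{id}$. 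After this normalization the two systems are genuinely conformal, $h' = \frac{1}{\mu}h$, they still share the scattering relation at $k_2$, and the boundary hypotheses $h|_{\partial N}=h'|_{\partial N}$, $i^*\alpha=i^*\alpha'$, $U|_{\partial N}=U'|_{\partial N}$ persist because $f$ fixes the boundary and $\mu|_{\partial N}=1$.

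Next I would invoke the conformal case, Theorem \ref{thm:MP_rig}(1), at the energy level $k_2$: since $h'$ is now conformal to $h$, the systems share the scattering relation at $k_2$, and their boundary data match, they are $k_2$-gauge equivalent \emph{with diffeomorphism equal to the identity}. Hence there is a positive $\mu_2$ with $h' = \frac{1}{\mu_2}h$ and $U' = \mu_2(U-k_2)+k_2$. Comparing the two expressions for $h'$ forces $\mu_2=\mu$, and comparing the two expressions for $U'$ gives $\mu(U-k_1)+k_1 = \mu(U-k_2)+k_2$, i.e. $(1-\mu)(k_1-k_2)=0$. Since $k_1 \neq k_2$, this yields $\mu\equiv 1$, whence $h'=h$, $U'=U$, and $\alpha'=\alpha+d\varphi$, which is exactly a gauge equivalence; undoing the normalization $f=\mathrm{id}$ then exhibits the original systems as gauge equivalent via $f$.

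The one genuinely delicate point — where I expect to spend the most care — is the reduction to the conformal case together with the legitimacy of applying Theorem \ref{thm:MP_rig}(1) afterwards. I must verify that the pulled-back system $f^*(h,\alpha,U)$ remains simple (which holds because simplicity is preserved under gauge transformations, so no $C^m$-closeness is needed here) and that Theorem \ref{thm:MP_rig}(1) may be used with \emph{scattering-relation} data rather than boundary-action data, which requires the matching boundary conditions after pullback; both follow from $f|_{\partial N}=\mathrm{id}$ and $\mu|_{\partial N}=1$. Once these checks are in place, the final algebraic step forcing $\mu\equiv 1$ is immediate from $k_1\neq k_2$.
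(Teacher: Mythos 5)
Your proposal is correct, and its first half coincides with the paper's own argument: both apply Theorem \ref{thm:MP_gen} at one energy level to produce the $k_1$-gauge triple $(f,\varphi,\mu)$, both observe $\mu|_{\partial N}=1$ (which, note, is not part of the conclusion of Theorem \ref{thm:MP_gen} itself but follows from the hypothesis $h|_{\partial M}=h'|_{\partial M}$ combined with $h'=\frac{1}{\mu}f^{*}h$ and $f|_{\partial N}=\mathrm{id}$), and both reduce to a conformal pair by pulling back along $f$ and invoking the gauge invariance of the projected scattering relation (\cite{mt23}*{Lemma 4.4}). You diverge in the endgame: the paper applies the two-energy conformal rigidity result, Theorem \ref{thm:MP_rig_2energy}(1), to the pair $(f^{*}h,\alpha',f^{*}U)$ and $(h',\alpha',U')$, which at once yields gauge equivalence with identity diffeomorphism and hence $\mu\equiv 1$; you instead apply the single-energy conformal theorem, Theorem \ref{thm:MP_rig}(1), at the second level $k_2$, identify the two conformal factors ($\mu_2=\mu$ since both relate the same pair of metrics), and kill $\mu$ through the elementary comparison of the two potential identities, $(1-\mu)(k_1-k_2)=0$. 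Both endgames are valid. Yours is marginally more self-contained, in that it makes explicit the algebraic mechanism by which a second energy level forces $\mu\equiv 1$, essentially re-deriving the relevant piece of the Assylbekov--Zhou theorem; the price is that the scattering version of Theorem \ref{thm:MP_rig} carries boundary-matching hypotheses, which you correctly verify for the normalized pair ($h'|_{\partial N}=f^{*}h|_{\partial N}$ as full tensors because $\mu|_{\partial N}=1$, $i^{*}\alpha'=i^{*}f^{*}\alpha$ because $\varphi|_{\partial N}=0$, and $U'|_{\partial N}=f^{*}U|_{\partial N}$ likewise). The paper's route is shorter precisely because, as noted in the remark following Theorem \ref{thm:MP_rig_2energy}, no boundary agreement needs to be checked when data at two energy levels is available.
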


\begin{proof}
    By Theorem \ref{thm:MP_gen} there is a diffeomorphism $f\colon N \to N$ with $f|_{\partial N}=id_{\partial N}$, a smooth function $\varphi \colon N \to \R$ with $\varphi|_{\partial N}=0$, and a strictly positive function $\mu \in C^{\infty}(N)$, such that $h'=\frac{1}{\mu}f^{*}h$, $\alpha'=f^* \alpha+d \varphi$ and $U'=\mu (f^{*}U-k)+k$. Note that this already gives the relation between $\alpha$ and $\alpha'$ that we need. Our work now focuses in proving that $\mu=1$ (note that we already have $\mu|_{\partial M}=1$). Let $\tilde{\mathcal{S}}$ be scattering relation associated to $(f^{*}h,\alpha',f^{*}U)$. The fact that $(f^{*}h,\alpha',f^{*}U)$ and $(h,\alpha,U)$ are gauge equivalent imply that they have the same scattering relation (\cite{mt23}*{Lemma 4.4}). Since $h'$ is conformal to $f^{*}h$, Theorem \ref{thm:MP_rig_2energy} (1) implies that $\mu=1$, that is $h=f^{*}h$ and $U'=f^{*}U$.
\end{proof}

\section{Reduction of Hamiltonian systems with symmetry} \label{app:ham_red}

\begin{defin} \label{def:mo} Let $(P,\eta)$ be a symplectic manifold, $\Phi \colon G \times P \to P$ be a symplectic action of the Lie group $G$ on $P$.
    \begin{enumerate}
        \item A map 
    \[ J \colon P \to \mathfrak{g}^{*}, \]
    is a \emph{momentum mapping} for the action $\Phi$, if for every $\xi \in \mathfrak{g}$, we have
    \[ d(\hat{J}(\xi))=\iota_{\xi_{X}}\eta, \]
    where
    \begin{align*}
        \hat{J}(\xi) \colon P  &\to \R, \\
        x &\mapsto J(x) \xi,
    \end{align*}
    and $\xi_{X}$ is the infinitesimal generator of the action of $\R$ corresponding to the subgroup $\exp(t\xi)$. 
    \item A momentum map $J$ is said to be \emph{$A d^*$-equivariant} if the actions are compatible with $J$, i.e. $J(\Phi_g(x))=Ad_{g^{-1}}^* J(x)$ $ \forall x \in X$, $\forall g \in G$.
    \end{enumerate}
     \item The tuple $(P,\omega,J,\Phi)$ is called a \emph{Hamiltonian $G$-space}.
\end{defin}

To find a momentum map in Section \ref{sec:ham_red}, we use the following result.

\begin{lemma}[\cite{am}*{Corollary 4.2.13}] \label{lemma:mo_tan}
    Let $M$ be a pseudo-Riemannian manifold and let $G$ be a group acting on $M$ by isometries. Lift this to a symplectic action on $TM$. Then, its momentum mapping is given by
    \begin{equation} \label{eq:mom}
        \hat{J}(\xi)(v_q)=(v_q, \xi_Q(q))_{g},
    \end{equation}
    and is $Ad^{*}$-equivariant.
\end{lemma}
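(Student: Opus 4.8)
The plan is to deduce the formula from the classical momentum map for \emph{cotangent}-lifted actions by transporting it to $TM$ through the musical isomorphism $\flat \colon TM \to T^{*}M$ associated to $g$. Recall first the standard fact (\cite{am}*{Theorem 4.2.10}): if $G$ acts on $M$ and one cotangent-lifts the action to $T^{*}M$ with its canonical symplectic form $\omega_{\can}$, then the cotangent lift preserves the tautological one-form, and the resulting $Ad^{*}$-equivariant momentum map is $\hat{J}_{T^{*}M}(\xi)(\alpha_{q})=\langle \alpha_{q},\xi_{M}(q)\rangle$, where $\xi_{M}$ is the infinitesimal generator on the base (written $\xi_{Q}$ in the statement) and $\langle\cdot,\cdot\rangle$ is the duality pairing.

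The key observation is that, because $G$ acts by \emph{isometries}, the index-lowering map $\flat$ is $G$-equivariant, intertwining the tangent-lifted action on $TM$ with the cotangent-lifted action on $T^{*}M$; equivalently, the complete lifts of $\xi_{M}$ are $\flat$-related, $T\flat\circ\xi_{TM}=\xi_{T^{*}M}\circ\flat$. Since $\eta=\flat^{*}\omega_{\can}$ by definition, I would set $\hat{J}(\xi):=\hat{J}_{T^{*}M}(\xi)\circ\flat$ and verify the defining relation of Definition \ref{def:mo} directly:
\begin{align*}
d\bigl(\hat{J}(\xi)\bigr)&=\flat^{*}d\bigl(\hat{J}_{T^{*}M}(\xi)\bigr)=\flat^{*}\bigl(\iota_{\xi_{T^{*}M}}\omega_{\can}\bigr)\\
&=\iota_{\xi_{TM}}\flat^{*}\omega_{\can}=\iota_{\xi_{TM}}\eta,
\end{align*}
where the penultimate step uses that $\xi_{TM}$ and $\xi_{T^{*}M}$ are $\flat$-related. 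Evaluating gives $\hat{J}(\xi)(v_{q})=\langle\flat(v_{q}),\xi_{M}(q)\rangle=(v_{q},\xi_{M}(q))_{g}$, which is precisely \eqref{eq:mom}; and $Ad^{*}$-equivariance of $J$ is inherited from that of $\hat{J}_{T^{*}M}$ via the equivariant map $\flat$.

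The only place the isometry hypothesis is genuinely used is the equivariance of $\flat$, so that is the step I would treat most carefully; the remaining manipulations are formal properties of pullbacks and $\flat$-related vector fields (one should also fix the sign convention for $\omega_{\can}$ so that the relation in Definition \ref{def:mo} reads as stated). A fully self-contained alternative avoids $T^{*}M$ altogether: writing the complete lift $\xi_{TM}=\xi_{M}^{i}\partial_{x^{i}}+v^{j}\partial_{x^{j}}\xi_{M}^{i}\,\partial_{v^{i}}$ and using the explicit local form of $\eta$, one checks $\iota_{\xi_{TM}}\eta=d\bigl(g_{ij}v^{j}\xi_{M}^{i}\bigr)$ by substituting the Killing equation $\xi_{M}^{k}\partial_{k}g_{ij}+g_{kj}\partial_{i}\xi_{M}^{k}+g_{ik}\partial_{j}\xi_{M}^{k}=0$, which is exactly the infinitesimal form of the isometry assumption and is what makes the two sides match.
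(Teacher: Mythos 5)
Your proof is correct, and it is essentially the paper's: the paper offers no argument of its own here (the lemma is quoted verbatim from Abraham--Marsden, Corollary 4.2.13), and your route---transporting the canonical momentum map $\hat{J}_{T^{*}M}(\xi)(\alpha_{q})=\langle \alpha_{q},\xi_{M}(q)\rangle$ for the cotangent-lifted action through the musical isomorphism $\flat$, whose $G$-equivariance is exactly where the isometry hypothesis enters, and using the paper's own definition $\eta=\flat^{*}\omega_{\can}$---is precisely the textbook derivation being cited. Your coordinate alternative via the Killing equation is also sound, modulo the sign convention for $\omega_{\can}$ that you correctly flag.
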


The main tools to relate geodesics of standard stationary manifolds with $\MP$-geodesics are the following versions of the reduction theorem due Marsden and Weinstein \cite{mw} and Meyer \cite{meyer}, see also \cites{mr, mmopr}.

\begin{thm}[\cite{am}*{Theorem 4.3.1}] \label{thm:mwm}
    Let $(P, \eta)$ be a symplectic manifold on which the Lie group $G$ acts symplectically and let $J \colon P \to \mathfrak{g}^*$ be an $A d^*$-equivariant momentum mapping for this action. Assume $\rho \in \mathfrak{g}^*$ is a regular value of $J$ and that the isotropy group $G_\rho$ under the $Ad^{*}$ action on $\mathfrak{g}^*$ acts freely and properly on $J^{-1}(\rho)$. 
    Then $P_\rho=J^{-1}(\rho) / G_\rho$ has a unique symplectic form $\eta_\rho$ with the property
    \[ \pi_\rho^* \eta_\rho=i_\rho^* \eta, \]
    where $\pi_\rho \colon J^{-1}(\rho) \to P_\rho$ is the canonical projection and $i_\rho: J^{-1}(\rho) \to P$ is the inclusion.
\end{thm}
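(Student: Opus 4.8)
The plan is to construct the reduced form by descent from $J^{-1}(\rho)$ to the quotient, after isolating the pointwise symplectic linear algebra that makes the descent work. First I would record that, since $\rho$ is a regular value, $J^{-1}(\rho)$ is an embedded submanifold of $P$ with $T_x J^{-1}(\rho)=\ker dJ_x$ at every $x\in J^{-1}(\rho)$. The $Ad^{*}$-equivariance of $J$ shows that $G_\rho$ preserves $J^{-1}(\rho)$: if $J(x)=\rho$ and $g\in G_\rho$, then $J(\Phi_g(x))=Ad^{*}_{g^{-1}}\rho=\rho$. As $G_\rho$ acts freely and properly on $J^{-1}(\rho)$, the quotient $P_\rho=J^{-1}(\rho)/G_\rho$ is a smooth manifold and $\pi_\rho$ is a surjective submersion (indeed a principal $G_\rho$-bundle); consequently $\pi_\rho^{*}$ is injective on forms, and a $G_\rho$-invariant horizontal form on $J^{-1}(\rho)$ descends to a unique form downstairs.

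The key step, and the main obstacle, is the pointwise symplectic linear algebra. For a subspace $W\subseteq T_xP$ write $W^{\eta}=\{u\in T_xP:\eta_x(u,w)=0\ \forall w\in W\}$ for the symplectic orthogonal, and set $\mathfrak{g}\cdot x=\{\xi_P(x):\xi\in\mathfrak{g}\}=T_x(G\cdot x)$. The momentum-map identity $d(\hat J(\xi))=\iota_{\xi_P}\eta$ gives, for $v\in T_xP$, that $dJ_x(v)=0$ if and only if $\eta_x(\xi_P(x),v)=0$ for all $\xi\in\mathfrak{g}$, that is,
\[ T_x J^{-1}(\rho)=\ker dJ_x=(\mathfrak{g}\cdot x)^{\eta}. \]
Differentiating the equivariance relation $J\circ\Phi_{\exp(t\xi)}=Ad^{*}_{\exp(-t\xi)}\circ J$ at $t=0$ yields $dJ_x(\xi_P(x))=-ad^{*}_\xi J(x)$, which at $x\in J^{-1}(\rho)$ vanishes exactly when $\xi\in\mathfrak{g}_\rho$. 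Hence $\xi_P(x)\in\ker dJ_x$ if and only if $\xi\in\mathfrak{g}_\rho$, giving the orbit–kernel relation
\[ T_x(G_\rho\cdot x)=\mathfrak{g}_\rho\cdot x=(\mathfrak{g}\cdot x)\cap T_x J^{-1}(\rho). \]

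With these two identities the descent is immediate. The pullback $i_\rho^{*}\eta$ is $G_\rho$-invariant because $G_\rho$ acts symplectically, and it is horizontal: a vertical vector $v\in T_x(G_\rho\cdot x)=\mathfrak{g}_\rho\cdot x\subseteq\mathfrak{g}\cdot x$ pairs trivially under $\eta$ with every $w\in T_x J^{-1}(\rho)=(\mathfrak{g}\cdot x)^{\eta}$, so $\iota_v(i_\rho^{*}\eta)=0$. A $G_\rho$-invariant horizontal $2$-form on the total space of the bundle $\pi_\rho$ therefore descends to a unique $2$-form $\eta_\rho$ on $P_\rho$ with $\pi_\rho^{*}\eta_\rho=i_\rho^{*}\eta$, the uniqueness being forced by the injectivity of $\pi_\rho^{*}$.

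Finally I would verify that $\eta_\rho$ is symplectic. Closedness follows from $\pi_\rho^{*}d\eta_\rho=d\,i_\rho^{*}\eta=i_\rho^{*}d\eta=0$ together with the injectivity of $\pi_\rho^{*}$. For nondegeneracy, suppose $\bar v\in T_{[x]}P_\rho$ satisfies $\eta_\rho(\bar v,\cdot)=0$ and lift it to $v\in T_x J^{-1}(\rho)$; then $\eta_x(v,w)=0$ for all $w\in T_x J^{-1}(\rho)=(\mathfrak{g}\cdot x)^{\eta}$, so $v\in((\mathfrak{g}\cdot x)^{\eta})^{\eta}=\mathfrak{g}\cdot x$. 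Combined with $v\in T_x J^{-1}(\rho)$ and the orbit–kernel relation, $v\in\mathfrak{g}_\rho\cdot x$ is vertical, whence $\bar v=d\pi_\rho(v)=0$. Thus $\eta_\rho$ is the unique symplectic form with $\pi_\rho^{*}\eta_\rho=i_\rho^{*}\eta$, as claimed.
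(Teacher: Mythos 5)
Your proposal is correct, and it is essentially the canonical Marsden--Weinstein argument: the paper itself does not prove this statement but quotes it from \cite{am}*{Theorem 4.3.1}, and your chain of steps --- the identities $T_xJ^{-1}(\rho)=\ker dJ_x=(\mathfrak{g}\cdot x)^{\eta}$ and $\mathfrak{g}_\rho\cdot x=(\mathfrak{g}\cdot x)\cap T_xJ^{-1}(\rho)$ from equivariance, descent of the $G_\rho$-invariant horizontal form $i_\rho^*\eta$ through the principal bundle $\pi_\rho$, and nondegeneracy via $\bigl((\mathfrak{g}\cdot x)^{\eta}\bigr)^{\eta}=\mathfrak{g}\cdot x$ --- is precisely the standard proof found in that reference (with the usual implicit finite-dimensionality in the double symplectic orthogonal). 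Nothing is missing, and the uniqueness and closedness arguments via injectivity of $\pi_\rho^{*}$ are exactly as in the cited source.
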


\begin{thm}[\cite{am}*{Theorems 4.3.5}]  \label{thm:ham_red}
    Under the assumptions of Theorem \ref{thm:mwm}, if $H \colon P \to \R$ is invariant under the action of $G$, then the flow $F_t$ of $X_H$ leaves $J^{-1}(\rho)$ invariant and commutes with the action of $G_\rho$ on $J^{-1}(\rho)$, so it induces canonically a flow $H_t$ on $P_\rho$ satisfying 
    \[\pi_\rho \circ F_t=H_t \circ \pi_\rho.\]
    This flow is a Hamiltonian flow on $P_\rho$ with $a$ Hamiltonian $H_\rho$ satisfying 
    \[H_\rho \circ \pi_\rho=H \circ i_\rho.\]
    $H_\rho$ is called the \emph{reduced Hamiltonian}.
\end{thm}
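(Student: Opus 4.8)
The plan is to run the standard Marsden–Weinstein–Meyer reduction-of-dynamics argument, exploiting that both the symplectic form and the Hamiltonian are $G$-invariant, and to carry everything down to the quotient $P_\rho=J^{-1}(\rho)/G_\rho$ using the defining relation $\pi_\rho^*\eta_\rho=i_\rho^*\eta$ from Theorem \ref{thm:mwm}.

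First I would show that the flow $F_t$ of $X_H$ preserves $J^{-1}(\rho)$; this is Noether's theorem. The momentum-map relation $d\hat{J}(\xi)=\iota_{\xi_P}\eta$ identifies the infinitesimal generator $\xi_P$ of $\xi\in\mathfrak{g}$ with the Hamiltonian vector field $X_{\hat{J}(\xi)}$, so the derivative of $\hat{J}(\xi)$ along $X_H$ equals $dH(\xi_P)=\mathcal{L}_{\xi_P}H$, which vanishes because $H$ is $G$-invariant. Hence each component $\hat{J}(\xi)$ is a first integral of $X_H$, so $J$ is constant along $F_t$ and every level set, in particular $J^{-1}(\rho)$, is invariant. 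Next I would establish that $F_t$ commutes with the $G_\rho$-action: since the action is symplectic and $H$ is $G$-invariant, the vector field $X_H$ is equivariant, $(\Phi_g)_*X_H=X_H$ for all $g\in G$, and integrating gives $\Phi_g\circ F_t=F_t\circ\Phi_g$. Restricting $g$ to $G_\rho$ and the base manifold to $J^{-1}(\rho)$, the flow therefore descends to a well-defined flow $H_t$ on $P_\rho$ characterized by $\pi_\rho\circ F_t=H_t\circ\pi_\rho$; here freeness and properness of the $G_\rho$-action (assumed in Theorem \ref{thm:mwm}) are what make $\pi_\rho$ a submersion and the descent legitimate.

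The substance of the proof is the last step: identifying $H_t$ as the Hamiltonian flow of the function $H_\rho$ determined by $H_\rho\circ\pi_\rho=H\circ i_\rho$. This $H_\rho$ is well defined precisely because $H$ is constant on $G_\rho$-orbits inside $J^{-1}(\rho)$. Since $X_H$ is tangent to $J^{-1}(\rho)$ by the first step and $G_\rho$-invariant by the second, it pushes forward under $\pi_\rho$ to a vector field $Y$ on $P_\rho$ whose flow is exactly $H_t$, so it remains to check $Y=X_{H_\rho}$. Taking any $v\in T_{\pi_\rho(m)}P_\rho$, lifting it to $\tilde v\in T_m J^{-1}(\rho)$, and using $\pi_\rho^*\eta_\rho=i_\rho^*\eta$ together with $\iota_{X_H}\eta=dH$, I would compute
\[
\eta_\rho(Y,v)=(\pi_\rho^*\eta_\rho)(X_H,\tilde v)=(i_\rho^*\eta)(X_H,\tilde v)=dH(\tilde v)=d(H_\rho\circ\pi_\rho)(\tilde v)=dH_\rho(v).
\]
As $v$ is arbitrary this gives $\iota_Y\eta_\rho=dH_\rho$, hence $Y=X_{H_\rho}$ and $H_t$ is its flow.

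The step I expect to require the most care is justifying the middle of that chain of equalities: one must verify that $X_H$ and $\tilde v$ are genuinely tangent to $J^{-1}(\rho)$ so that $i_\rho^*\eta$ may be evaluated on them, and that the pushforward $Y=(\pi_\rho)_*X_H$ is a bona fide vector field on $P_\rho$ rather than merely a section over orbits. Both rest on combining the invariance conclusions of the first two steps with the submersion property of $\pi_\rho$, and it is here that the hypotheses of Theorem \ref{thm:mwm}—regularity of $\rho$ and the free, proper $G_\rho$-action—are indispensable. Once these tangency and well-definedness points are secured, the remaining identities are formal consequences of the reduction identity and the definitions of $H_\rho$ and $Y$.
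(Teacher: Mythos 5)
The paper offers no proof of this statement: it is quoted as a known result from Abraham--Marsden (Theorem 4.3.5 of \cite{am}), so there is no in-paper argument to compare against. Your proposal is the standard and correct proof of that classical theorem --- Noether's theorem for invariance of the level set, equivariance of $X_H$ for the descent of the flow, and the reduction identity $\pi_\rho^*\eta_\rho=i_\rho^*\eta$ plus nondegeneracy of $\eta_\rho$ to identify the reduced flow as the Hamiltonian flow of $H_\rho$; the only quibble is an immaterial sign in the Noether step (with the paper's convention $d\hat{J}(\xi)(X_H)=\eta(\xi_P,X_H)=-dH(\xi_P)$), which vanishes either way.
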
 

\section{The geometry of the lightlike case} \label{app:light_geo}

In this section, we revisit some hypothesis in the work \cite{plamen}, in order to relate the geometry of $(M,g)$ with $g$ as in \eqref{eq:metric2} with $\lambda=1$ with a corresponding magnetic system. 

From Theorem \ref{thm:rel_lorentz_mp}, we obtain the following.

\begin{cor}
    Let $(M,g)$ be a standard stationary space time with $\lambda=1$. 
    \begin{enumerate}
        \item The symplectic reduction of the Hamiltonian $G$-space $(TM,\eta,\Phi,J)$ endowed with the Hamiltonian $H$ is given by the manifold $TN$  endowed with the symplectic form 
        \[  \eta_{\rho}=\delta^{\flat}+\pi^{*}(\rho d\omega), \] where $\delta^{\flat}$ is the pullback of the canonical 2-form $\delta$ on $T^{*}N$ to $TN$ via $h$, and $\pi \colon TN \to N$ is the base point projection. The reduced Hamiltonian is given by
        \[ H_{\rho}(x,v)=\frac{1}{2}|v_{x}|_{h(x)}^{2}+\frac{\rho^{2}}{-2}. \]
        \item The projection of every trajectory of the geodesic flow on $(TM,g)$ corresponds to a trajectory of the Hamiltonian flow on $(TN,\eta_{\rho},H_{\rho})$ for some $\rho \in \R$. Reciprocally, given a trajectory $(x,v)$ of the Hamiltonian flow on $(TN,\eta_{\rho},H_{\rho})$, one can find a trajectory of the flow on $(TM,\eta,H)$ whose projection to $TN$ is $(x,v)$.
        \item The projection of every geodesic in $(M,g)$ corresponds an $\MP$-geodesic on the $\MP$-system $(N,h,\rho d\omega,\rho^{2}/(-2))$, where $\rho \in \R$. Reciprocally, given an $\MP$-geodesic $x$ of the $\MP$-system $(N,h,\rho d\omega,\rho^{2}/(-2))$, one can find a geodesic on $(M,g)$ whose projection to $N$ is $x$. In particular, lightlike geodesics correspond to curves with $|\dot{x}|_{h}^{2}=\rho^{2}$.
    \end{enumerate}
\end{cor}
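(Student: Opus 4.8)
The plan is to treat this corollary as a direct specialization of Lemma~\ref{lemma:sym_red} and Theorem~\ref{thm:rel_lorentz_mp} to the case $\lambda \equiv 1$, with only the final lightlike characterization requiring a short independent computation.

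First I would observe that part~(1) is exactly the content of Lemma~\ref{lemma:sym_red} evaluated at $\lambda \equiv 1$: the reduced symplectic form $\eta_{\rho}=\delta^{\flat}+\pi^{*}(\rho d\omega)$ from \eqref{eq:eta_rho} does not involve $\lambda$, so it is unchanged, while the reduced Hamiltonian \eqref{eq:H_rho} becomes $H_{\rho}(x,v)=\tfrac{1}{2}|v_{x}|_{h(x)}^{2}+\rho^{2}/(-2)$. Part~(2) is then literally the statement of part~(1) of Theorem~\ref{thm:rel_lorentz_mp}, since the reduced manifold $M_{\rho}$ is $TN$ and the trajectory correspondence established there is insensitive to the specific value of $\lambda$; I would simply restate it.

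Next, for the first two sentences of part~(3), I would invoke part~(2) of Theorem~\ref{thm:rel_lorentz_mp} with $\lambda \equiv 1$: the associated $\MP$-system $(N,h,d\rho\omega,\rho^{2}/(-2\lambda))$ specializes to $(N,h,\rho d\omega,\rho^{2}/(-2))$, and the stated correspondence between geodesics of $(M,g)$ and $\MP$-geodesics, together with its converse, transfers verbatim.

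The only genuinely new assertion is the lightlike characterization, and this I would prove by mirroring the derivation of \eqref{eq:2e}. Taking a geodesic $\gamma=(t,x)$ with $|\dot{\gamma}|_{g}^{2}=0$, the metric form \eqref{eq:metric2} gives $-\lambda(\dot{t}-\langle\omega,\dot{x}\rangle)^{2}+|\dot{x}|_{h}^{2}=0$, while the conservation law \eqref{eq:const2} with $\lambda\equiv1$ reads $\dot{t}-\langle\omega,\dot{x}\rangle=-\rho$; substituting the latter into the former yields $|\dot{x}|_{h}^{2}=\rho^{2}$, as claimed. I do not anticipate any real obstacle: every ingredient is already in place, and the lightlike computation is a one-line substitution of the same type as the timelike case recorded in \eqref{eq:2e}.
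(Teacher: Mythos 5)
Your proposal is correct and follows exactly the paper's route: the paper offers no separate proof, introducing the corollary with ``From Theorem \ref{thm:rel_lorentz_mp}, we obtain the following,'' i.e., parts (1)--(3) are read off by specializing Lemma \ref{lemma:sym_red} and Theorem \ref{thm:rel_lorentz_mp} to $\lambda \equiv 1$, just as you do. Your one-line lightlike computation is the $m=0$ analogue of the derivation of \eqref{eq:2e} and is exactly the intended justification of the final claim $|\dot{x}|_{h}^{2}=\rho^{2}$.
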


Note that in contrast to what happens in \cite{plamen}, we obtain an $\MP$-system and not a magnetic system. However, the potential in the $\MP$-system that we just found is constant. Hence, the $\MP$-geodesics with energy 0 are magnetic geodesics of the system $(N,h,d\rho \omega)$ with $|\dot{x}|_{h}^{2}=\rho^{2}$. So, this last magnetic system is the one that appears in \cite{plamen}. In that work, the following definition is given

\begin{defin} \label{defin:simple_mag}
    A stationary space $(M,g)$ is simple if the ``projected'' magnetic system is simple.
\end{defin}

Of course, we have a lot of magnetic systems, one for each $\rho$. However, in that article the author consider lightlike geodesics, which satisfy
\begin{equation} \label{eq:rho_light}
    \dot{t}-\langle \omega,\dot{x} \rangle =1.
\end{equation}
Since the reparametrization of a lightlike geodesic is again a lightlike geodesic, we can chose a parametrization of $\gamma$ so that $\rho=-1$. Therefore, magnetic system from Definition \ref{defin:simple_mag} is just $(N,h,-d\omega)$.

\begin{lemma}
    Let $p=(t_{1},y)$, $q=(t_{2},z)$ with $y \neq z$. Let $\ell=\{(s,z):s \in \R\}$. Then, there exists a unique lightlike geodesic joining $p$ with $\ell$ so that $(\partial_{t},\dot{\gamma})_{g}=-1$ if and only if the magnetic exponential map $\exp_{y}^{\mu}$ of the system $(N,h,-d\omega)$ is a diffeomorphism.
\end{lemma}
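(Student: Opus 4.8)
The plan is to mirror, almost verbatim, the proof of the timelike lemma in Section \ref{sec:con_exp}, now exploiting the lightlike-to-magnetic correspondence recorded in the corollary above instead of invoking Theorem \ref{thm:rel_lorentz_mp} directly. The key observation is that, after fixing the parametrization so that $(\partial_{t},\dot{\gamma})_{g}=-1$ (equivalently $\rho=-1$, i.e. the constraint \eqref{eq:rho_light}), a lightlike geodesic $\gamma=(t,x)$ of $(M,g)$ projects to a magnetic geodesic $x$ of $(N,h,-d\omega)$ with $|\dot{x}|_{h}^{2}=1$, that is, at the energy level $\mu$ where $|\dot{x}|_{h}=1$; conversely, every such magnetic geodesic lifts to a lightlike geodesic by defining $t$ through \eqref{eq:rho_light} together with the initial condition $t(0)=t_{1}$. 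Since the induced $\MP$-potential $\rho^{2}/(-2)=-1/2$ is constant, it does not affect the equations, so the projected $\MP$-geodesics are genuine magnetic geodesics of $(N,h,-d\omega)$.

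For the forward direction I would assume that for each $z\neq y$ there is a unique lightlike geodesic from $p=(t_{1},y)$ to $\ell=\{(s,z):s\in\R\}$ with $(\partial_{t},\dot{\gamma})_{g}=-1$. Projecting yields a magnetic geodesic issued from $y$ and reaching $z$ with $|\dot{x}|_{h}=1$, so that $\exp_{y}^{\mu}(s_{0}\dot{x}(0))=z$ for some exit parameter $s_{0}$; this gives surjectivity of $\exp_{y}^{\mu}$. For injectivity, if $\exp_{y}^{\mu}(s_{1}v_{1})=\exp_{y}^{\mu}(s_{2}v_{2})=z$, then the two resulting magnetic geodesics lift (via \eqref{eq:rho_light} with $t(0)=t_{1}$) to two distinct lightlike geodesics from $p$ meeting $\ell$ with $\rho=-1$, contradicting uniqueness. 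Since the magnetic exponential map is always a local diffeomorphism (the proof being identical to the Riemannian one), a bijective local diffeomorphism is a diffeomorphism, and the conclusion follows.

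For the reverse direction, assuming $\exp_{y}^{\mu}$ is a diffeomorphism, for each $z\neq y$ there is a unique pair $(v_{x},s_{0})$ with $|v_{x}|_{h}=1$ and $\exp_{y}^{\mu}(s_{0}v_{x})=z$, producing a magnetic geodesic from $y$ to $z$, which I lift to a lightlike geodesic from $p$ to $\ell$ with $(\partial_{t},\dot{\gamma})_{g}=-1$ by defining $t$ via \eqref{eq:rho_light}. Uniqueness of this lightlike geodesic is inherited from the injectivity of $\exp_{y}^{\mu}$: any other such geodesic would project to a magnetic geodesic from $y$ to $z$ at the same energy level, forcing its initial velocity and exit parameter to coincide, hence the geodesics to coincide after lifting with the same initial time $t(0)=t_{1}$.

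The only genuine subtlety, and the step I would check most carefully, is the bookkeeping of the parametrization. Unlike the timelike case, a lightlike geodesic admits affine reparametrizations, so one must verify that the normalization $(\partial_{t},\dot{\gamma})_{g}=-1$ simultaneously pins down the parameter uniquely (this is exactly the conservation law \eqref{eq:const2} specialized to $\lambda=1$, $\rho=-1$) and matches the fixed speed $|\dot{x}|_{h}=1$, i.e. the fixed energy level $\mu$ of the magnetic system. This is what upgrades the lightlike-to-magnetic correspondence from a bijection up to reparametrization into a genuine bijection, which is precisely what makes the two uniqueness statements equivalent.
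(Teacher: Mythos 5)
Your proposal is correct and follows essentially the same route as the paper: project the lightlike geodesic (normalized to $\rho=-1$ via \eqref{eq:rho_light}) to a unit-speed magnetic geodesic of $(N,h,-d\omega)$ to get surjectivity and injectivity of $\exp_{y}^{\mu}$, lift magnetic geodesics back by defining $t$ through \eqref{eq:rho_light} with $t(0)=t_{1}$ for the converse, and invoke the fact that the magnetic exponential map is a local diffeomorphism. Your explicit remark that the normalization $(\partial_{t},\dot{\gamma})_{g}=-1$ fixes the affine parametrization and matches the energy level is a point the paper handles implicitly (in its discussion preceding the lemma) rather than inside the proof, but it is the same argument.
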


\begin{proof}
Let $\gamma=(t,x)$ be the unique lightlike geodesic joining $p$ with $\ell$ with $(\partial_{t},\dot{\gamma})_{g}=1$. Then, $x$ is a unit speed magnetic geodesic of the system $(N,h,-d\omega)$ with $x(0)=y$ and $x(s_{0})=z$, for some $s_{0}$. Then, $\exp_{y}^{\mu}(s_{0}\dot{x}(0))=z$, which shows that $\exp_{y}^{\mu}$ is surjective. If we have $\exp_{y}^{\mu}(s_{1}v_{1})=\exp_{y}^{\mu}(s_{2}v_{2})=z$, then there are two unit speed magnetic geodesics $x_{1},x_{2}$, so that $x_{j}(0)=y$, $\dot{x}_{1}(0)=v_{1}$, $\dot{x}_{2}=v_{2}$ and $x_{1}(s_{1})=x_{2}(s_{2})=z$. Define $t$ by \eqref{eq:rho_light} (with $\rho=-1$) and initial condition $t(0)=t_{1}$. Then $\gamma_{j}:=(t,x_{j})$ are two lightlike geodesics joining $z$ with $\ell$ so that $(\partial_{t},\dot{\gamma})_{g}=-1$, which contradicts the uniqueness condition. This shows injectivity of the exponential map. Since the magnetic exponential map is a local diffeomorphism, we obtain the desired conclusion.

Reciprocally, let $p=(t_{1},y)$, $q=(t_{2},z)$ with $y \neq z$ be points in $M$. Since $\exp_{y}^{\mu}$ is a diffeomorphism, there exist a unique unitary vector $v_{x}$ and a unique $s_{0}$ so that $\exp_{y}^{\mu}(s_{0}v_{x})=z$. This means that there exists a unit speed magnetic geodesic $x$ so that $x(0)=y$, $\dot{x}(0)=v_{x}$ and $x(s_{0})=z$. Define $t$ by \eqref{eq:rho_light} (again with $\rho=-1$) and initial condition $t(0)=t_{1}$. Therefore, $\gamma:=(t,x)$ is a lightlike geodesic joining $z$ with $V$ so that $(\partial_{t},\dot{\gamma})_{g}=-1$. If there exists another $\gamma$ satisfying the previous conditions, say $\gamma=(\tilde{t},\tilde{x})$, then $\tilde{x}$ would satisfies the same conditions as $x$, contradicting the injectivity of the exponential map.
\end{proof}

In our setting, \cite{masiello}*{Theorem 6.3.4} implies the existence of lightlike geodesic joining points with integral lines of $\partial_{t}$. Therefore, the only ``new'' condition imposed by Definition \ref{defin:simple_mag} is that $\partial N$ is convex in the magnetic sense.

Using our computations from Section \ref{sec:conv} with $\lambda=1$, we see
\begin{align*}
    (^g \nabla_{v}\nu,v)_{g}
    =&(^{h}\nabla_{v_{x}}\nu_{x},v_{x})_{h}-(v^{0}-\langle \omega,v_{x}\rangle)v_{x}^{j}\nu_{x}^{k}\partial_{x^{j}}\omega_{k} \\
    &+(v^{0}-\langle \omega,v_{x}\rangle)\frac{1}{2}( \partial_{x^{j}}\omega_{i}+\partial_{x^{i}}\omega_{j} )\nu_{x}^{i}v_{x}^{j}\\
    &+(v^{0}-\langle \omega,v_{x}\rangle)\frac{1}{2}h^{t\ell}\omega_{\ell}\omega_{t}( \partial_{x^{j}}\omega_{i}+\partial_{x^{i}}\omega_{j})\nu_{x}^{i}v_{x}^{j} \\
    &+\langle \omega,\nu_{x}\rangle \frac{1}{2}(\partial_{x^{j}} \omega_{k} - \partial_{x^{k}}\omega_{j}) v_{x}^{j}v_{x}^{k}\\
    &+v^{0}\frac{1}{2}(\partial_{x^{i}}\omega_{k}-\partial_{x^{k}} \omega_{i})\nu_{x}^{i}v_{x}^{k}.
\end{align*}
If we only consider vectors with $(\partial_{t},v)_{g}=-1$, and we assume that $\langle\omega,v_{x} \rangle=0$ for all $v \in S\partial N$ (so that $v=(1,v_{x})$), we obtain
\[ (^g \nabla_{v}\nu,v)_{g}
    =(^{h}\nabla_{v_{x}}\nu_{x},v_{x})_{h}+(Yv_{x},\nu_{x})_{h}+|\omega|_{h}^{2}\langle d^{s}\omega,\nu_{x}\otimes v_{x} \rangle, \]
where $Y$ corresponds to the Lorentz force associated to $-\omega$. Then,
\[ \Pi_{M}(v,v)
    =\Pi_{N}(v_{x},v_{x})-(Yv_{x},-\nu_{x})_{h}+|\omega|_{h}^{2}\langle d^{s}\omega,\nu_{x}\otimes v_{x} \rangle, \]
Our computations can now be stated as follows.

\begin{lemma}
    Assume that $d^{s}\omega \equiv 0$ and $\langle \omega,v_{x}\rangle=0$ for all $v_{x} \in S\partial N$. Then $\partial N$ is strictly magnetic convex if and only if $\Pi(v,v)>0$ for all $v=(1+\langle \omega,v_{x}\rangle,v_{x})$ with $v_{x} \in S\partial N$.
\end{lemma}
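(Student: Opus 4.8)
The plan is to read the equivalence directly off the identity for the second fundamental form $\Pi_M$ of $\R\times\partial N$ in $(M,g)$ that was established in the computation immediately preceding the statement, after specializing it to the two standing hypotheses. First I would recall that for the magnetic system $(N,h,-d\omega)$ the potential is constant, so the term $d_xU$ in the defining inequality \eqref{eq:stric_mp_con} drops out and strict magnetic convexity of $\partial N$ means exactly $\Pi_N(v_x,v_x)>(Y v_x,\tilde{\nu}_x)_h$ for every $v_x\in S\partial N$, where $Y$ is the Lorentz force of $-\omega$ and $\tilde{\nu}_x=-\nu_x$ is the inward unit normal. All of the analytic work — the Christoffel-symbol computation producing the boundary identity — has already been carried out above, so no new geometric input is required.

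Next I would substitute the hypothesis $d^{s}\omega\equiv 0$ into the identity
\[ \Pi_{M}(v,v) = \Pi_{N}(v_{x},v_{x})-(Yv_{x},-\nu_{x})_{h}+|\omega|_{h}^{2}\langle d^{s}\omega,\nu_{x}\otimes v_{x} \rangle, \]
which annihilates the last term. Using $-\nu_x=\tilde{\nu}_x$, the middle term is literally $-(Yv_x,-\nu_x)_h=-(Yv_x,\tilde{\nu}_x)_h$, so the identity collapses to
\[ \Pi_M(v,v)=\Pi_N(v_x,v_x)-(Yv_x,\tilde{\nu}_x)_h. \]
Here the relevant vectors are precisely $v=(1+\langle\omega,v_x\rangle,v_x)$ singled out by $(\partial_t,v)_g=-1$, and the second hypothesis $\langle\omega,v_x\rangle=0$ for $v_x\in S\partial N$ reduces these to $v=(1,v_x)$, matching the vectors used to derive the displayed formula; as $v$ ranges over this family, $v_x$ ranges over all of $S\partial N$.

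Finally the equivalence is read off pointwise: since the collapsed identity holds for each fixed $v_x$, the assertion ``$\Pi_M(v,v)>0$ for all admissible $v=(1+\langle\omega,v_x\rangle,v_x)$'' is word-for-word the inequality $\Pi_N(v_x,v_x)>(Yv_x,\tilde{\nu}_x)_h$ for all $v_x\in S\partial N$, i.e.\ strict magnetic convexity of $\partial N$. The only step requiring care — and the one I expect to be the main (minor) obstacle — is the sign bookkeeping around the exterior-versus-interior normal, namely verifying that $-(Yv_x,-\nu_x)_h$ is exactly the quantity $(Yv_x,\tilde{\nu}_x)_h$ appearing on the right-hand side of \eqref{eq:stric_mp_con}, together with confirming that the quantifier over $v_x\in S\partial N$ lines up on both sides once $\langle\omega,v_x\rangle=0$ is imposed.
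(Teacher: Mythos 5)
Your proposal is correct and follows essentially the same route as the paper: the paper's ``proof'' of this lemma is precisely the boundary computation preceding it, and the lemma is obtained, as you do, by setting $d^{s}\omega\equiv 0$ in the identity $\Pi_{M}(v,v)=\Pi_{N}(v_{x},v_{x})-(Yv_{x},-\nu_{x})_{h}+|\omega|_{h}^{2}\langle d^{s}\omega,\nu_{x}\otimes v_{x}\rangle$, noting that the constant potential makes the $d_xU$ term in \eqref{eq:stric_mp_con} vanish, and reading off the equivalence pointwise over $v_{x}\in S\partial N$. Your sign bookkeeping with $\tilde{\nu}_{x}=-\nu_{x}$ and the observation that $\langle\omega,v_{x}\rangle=0$ reduces the admissible vectors to $v=(1,v_{x})$ match the paper's specialization exactly.
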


\begin{bibdiv} 
\begin{biblist}

\bib{am}{book}{
   author={Abraham, Ralph},
   author={Marsden, Jerrold E.},
   title={Foundations of mechanics},
   edition={2},
   note={With the assistance of Tudor Ra\c{t}iu and Richard Cushman},
   publisher={Benjamin/Cummings Publishing Co., Inc., Advanced Book Program,
   Reading, MA},
   date={1978},
   pages={xxii+m-xvi+806},
   isbn={0-8053-0102-X},
   review={\MR{0515141}},
}

\bib{ahr}{article}{
   author={Albujer, Alma L.},
   author={Herrera, J\'{o}natan},
   author={Rubio, Rafael M.},
   title={New examples of static spacetimes admitting a unique standard
   decomposition},
   journal={Gen. Relativity Gravitation},
   volume={51},
   date={2019},
   number={3},
   pages={Paper No. 39, 11},
   issn={0001-7701},
   review={\MR{3921382}},
   doi={10.1007/s10714-019-2525-2},
}

\bib{ad}{article}{
   author={Assylbekov, Yernat M.},
   author={Dairbekov, Nurlan S.},
   title={The X-ray transform on a general family of curves on Finsler
   surfaces},
   journal={J. Geom. Anal.},
   volume={28},
   date={2018},
   number={2},
   pages={1428--1455},
   issn={1050-6926},
   review={\MR{3790506}},
   doi={10.1007/s12220-017-9869-1},
}

\bib{az}{article}{
   author={Assylbekov, Yernat M.},
   author={Zhou, Hanming},
   title={Boundary and scattering rigidity problems in the presence of a magnetic field and a potential},
   journal={Inverse Probl. Imaging},
   volume={9},
   date={2015},
   number={4},
   pages={935--950},
   issn={1930-8337},
   review={\MR{3461698}},
   doi={10.3934/ipi.2015.9.935},
}

\bib{bcfs}{article}{
   author={Bartolo, R.},
   author={Candela, A. M.},
   author={Flores, J. L.},
   author={S\'{a}nchez, M.},
   title={Geodesics in static Lorentzian manifolds with critical quadratic
   behavior},
   journal={Adv. Nonlinear Stud.},
   volume={3},
   date={2003},
   number={4},
   pages={471--494},
   issn={1536-1365},
   review={\MR{2017243}},
   doi={10.1515/ans-2003-0405},
}

\bib{bg}{article}{
   author={Bartolo, Rossella},
   author={Germinario, Anna},
   title={Convexity conditions on the boundary of a stationary spacetime and
   applications},
   journal={Commun. Contemp. Math.},
   volume={11},
   date={2009},
   number={5},
   pages={739--769},
   issn={0219-1997},
   review={\MR{2561935}},
   doi={10.1142/S0219199709003545},
}

\bib{cfs}{article}{
   author={Candela, A. M.},
   author={Flores, J. L.},
   author={S\'{a}nchez, M.},
   title={Global hyperbolicity and Palais-Smale condition for action
   functionals in stationary spacetimes},
   journal={Adv. Math.},
   volume={218},
   date={2008},
   number={2},
   pages={515--536},
   issn={0001-8708},
   review={\MR{2407945}},
   doi={10.1016/j.aim.2008.01.004},
}

\bib{cs}{article}{
   author={Candela, Anna Maria},
   author={S\'{a}nchez, Miguel},
   title={Geodesics in semi-Riemannian manifolds: geometric properties and
   variational tools},
   conference={
      title={Recent developments in pseudo-Riemannian geometry},
   },
   book={
      series={ESI Lect. Math. Phys.},
      publisher={Eur. Math. Soc., Z\"{u}rich},
   },
   isbn={978-3-03719-051-7},
   date={2008},
   pages={359--418},
   review={\MR{2436236}},
   doi={10.4171/051-1/10},
}

\bib{croke14}{article}{
   author={Croke, Christopher},
   title={Scattering rigidity with trapped geodesics},
   journal={Ergodic Theory Dynam. Systems},
   volume={34},
   date={2014},
   number={3},
   pages={826--836},
   issn={0143-3857},
   review={\MR{3199795}},
   doi={10.1017/etds.2012.164},
}

\bib{dpsu}{article}{
   author={Dairbekov, Nurlan S.},
   author={Paternain, Gabriel P.},
   author={Stefanov, Plamen},
   author={Uhlmann, Gunther},
   title={The boundary rigidity problem in the presence of a magnetic field},
   journal={Adv. Math.},
   volume={216},
   date={2007},
   number={2},
   pages={535--609},
   issn={0001-8708},
   review={\MR{2351370}},
   doi={10.1016/j.aim.2007.05.014},
}

\bib{du}{article}{
   author={Dairbekov, Nurlan},
   author={Uhlmann, Gunther},
   title={Reconstructing the metric and magnetic field from the scattering
   relation},
   journal={Inverse Probl. Imaging},
   volume={4},
   date={2010},
   number={3},
   pages={397--409},
   issn={1930-8337},
   review={\MR{2671103}},
   doi={10.3934/ipi.2010.4.397},
}

\bib{eskin}{article}{
   author={Eskin, Gregory},
   title={Rigidity for Lorentzian metrics with the same length of null-geodesics},
   date={2022},
   eprint={2205.05860},
   status={preprint},
}

\bib{fiko}{article}{
   author={Feizmohammadi, Ali},
   author={Ilmavirta, Joonas},
   author={Kian, Yavar},
   author={Oksanen, Lauri},
   title={Recovery of time-dependent coefficients from boundary data for
   hyperbolic equations},
   journal={J. Spectr. Theory},
   volume={11},
   date={2021},
   number={3},
   pages={1107--1143},
   issn={1664-039X},
   review={\MR{4322032}},
   doi={10.4171/jst/367},
}

\bib{fio}{article}{
   author={Feizmohammadi, Ali},
   author={Ilmavirta, Joonas},
   author={Oksanen, Lauri},
   title={The light ray transform in stationary and static Lorentzian
   geometries},
   journal={J. Geom. Anal.},
   volume={31},
   date={2021},
   number={4},
   pages={3656--3682},
   issn={1050-6926},
   review={\MR{4236538}},
   doi={10.1007/s12220-020-00409-y},
}

\bib{fs}{article}{
   author={Flores, Jos\'{e} Luis},
   author={S\'{a}nchez, Miguel},
   title={Geodesics in stationary spacetimes. Application to Kerr spacetime},
   journal={Int. J. Theor. Phys. Group Theory Nonlinear Opt.},
   volume={8},
   date={2002},
   number={3},
   pages={319--336},
   issn={1525-4674},
   review={\MR{2128502}},
}

\bib{fsu}{article}{
   author={Frigyik, Bela},
   author={Stefanov, Plamen},
   author={Uhlmann, Gunther},
   title={The X-ray transform for a generic family of curves and weights},
   journal={J. Geom. Anal.},
   volume={18},
   date={2008},
   number={1},
   pages={89--108},
   issn={1050-6926},
   review={\MR{2365669}},
   doi={10.1007/s12220-007-9007-6},
}

\bib{germinario}{article}{
   author={Germinario, Anna},
   title={Geodesics in stationary spacetimes and classical Lagrangian
   systems},
   journal={J. Differential Equations},
   volume={232},
   date={2007},
   number={1},
   pages={253--276},
   issn={0022-0396},
   review={\MR{2281195}},
   doi={10.1016/j.jde.2006.09.009},
}

\bib{gm}{article}{
   author={Giannoni, Fabio},
   author={Masiello, Antonio},
   title={On the existence of geodesics on stationary Lorentz manifolds with
   convex boundary},
   journal={J. Funct. Anal.},
   volume={101},
   date={1991},
   number={2},
   pages={340--369},
   issn={0022-1236},
   review={\MR{1136940}},
   doi={10.1016/0022-1236(91)90162-X},
}

\bib{gp}{article}{
   author={Giannoni, Fabio},
   author={Piccione, Paolo},
   title={An existence theory for relativistic brachistochrones in
   stationary space-times},
   journal={J. Math. Phys.},
   volume={39},
   date={1998},
   number={11},
   pages={6137--6152},
   issn={0022-2488},
   review={\MR{1653097}},
   doi={10.1063/1.532619},
}

\bib{guillarmou17}{article}{
   author={Guillarmou, Colin},
   title={Lens rigidity for manifolds with hyperbolic trapped sets},
   journal={J. Amer. Math. Soc.},
   volume={30},
   date={2017},
   number={2},
   pages={561--599},
   issn={0894-0347},
   review={\MR{3600043}},
   doi={10.1090/jams/865},
}

\bib{gmt}{article}{
   author={Guillarmou, Colin},
   author={Mazzucchelli, Marco},
   author={Tzou, Leo},
   title={Boundary and lens rigidity for non-convex manifolds},
   journal={Amer. J. Math.},
   volume={143},
   date={2021},
   number={2},
   pages={533--575},
   issn={0002-9327},
   review={\MR{4234974}},
   doi={10.1353/ajm.2021.0012},
}

\bib{harris1}{article}{
   author={Harris, Steven},
   title={Conformally stationary spacetimes},
   journal={Classical Quantum Gravity},
   volume={9},
   date={1992},
   number={7},
   pages={1823--1827},
   issn={0264-9381},
   review={\MR{1173295}},
}

\bib{harris2}{article}{
   author={Harris, Steven G.},
   title={Static- and stationary-complete spacetimes: algebraic and causal
   structures},
   journal={Classical Quantum Gravity},
   volume={32},
   date={2015},
   number={13},
   pages={135026, 48},
   issn={0264-9381},
   review={\MR{3361548}},
   doi={10.1088/0264-9381/32/13/135026},
}

\bib{hj}{article}{
   author={Herrera, J\'{o}natan},
   author={Javaloyes, Miguel Angel},
   title={Stationary-complete spacetimes with non-standard splittings and
   pre-Randers metrics},
   journal={J. Geom. Phys.},
   volume={163},
   date={2021},
   pages={Paper No. 104120, 19},
   issn={0393-0440},
   review={\MR{4207317}},
   doi={10.1016/j.geomphys.2021.104120},
}

\bib{herrerossca}{article}{
   author={Herreros, Pilar},
   author={Vargo, James},
   title={Scattering rigidity for analytic Riemannian manifolds with a
   possible magnetic field},
   journal={J. Geom. Anal.},
   volume={21},
   date={2011},
   number={3},
   pages={641--664},
   issn={1050-6926},
   review={\MR{2810847}},
   doi={10.1007/s12220-010-9162-z},
}

\bib{herrerosmag}{article}{
   author={Herreros, Pilar},
   title={Scattering boundary rigidity in the presence of a magnetic field},
   journal={Comm. Anal. Geom.},
   volume={20},
   date={2012},
   number={3},
   pages={501--528},
   issn={1019-8385},
   review={\MR{2974204}},
   doi={10.4310/CAG.2012.v20.n3.a3},
}

\bib{hu}{article}{
   author={Hintz, Peter},
   author={Uhlmann, Gunther},
   title={Reconstruction of Lorentzian manifolds from boundary light
   observation sets},
   journal={Int. Math. Res. Not. IMRN},
   date={2019},
   number={22},
   pages={6949--6987},
   issn={1073-7928},
   review={\MR{4032181}},
   doi={10.1093/imrn/rnx320},
}

\bib{iw}{article}{
   author={Ilmavirta, Joonas},
   author={Waters, Alden},
   title={Recovery of the sound speed for the acoustic wave equation from
   phaseless measurements},
   journal={Commun. Math. Sci.},
   volume={16},
   date={2018},
   number={4},
   pages={1017--1041},
   issn={1539-6746},
   review={\MR{3878151}},
   doi={10.4310/CMS.2018.v16.n4.a5},
}

\bib{js}{article}{
   author={Javaloyes, Miguel Angel},
   author={S\'{a}nchez, Miguel},
   title={A note on the existence of standard splittings for conformally
   stationary spacetimes},
   journal={Classical Quantum Gravity},
   volume={25},
   date={2008},
   number={16},
   pages={168001, 7},
   issn={0264-9381},
   review={\MR{2429739}},
   doi={10.1088/0264-9381/25/16/168001},
}

\bib{jo2007}{article}{
   author={Jollivet, Alexandre},
   title={On inverse problems in electromagnetic field in classical
   mechanics at fixed energy},
   journal={J. Geom. Anal.},
   volume={17},
   date={2007},
   number={2},
   pages={275--319},
   issn={1050-6926},
   review={\MR{2320165}},
   doi={10.1007/BF02930725},
}

\bib{klu}{article}{
   author={Kurylev, Yaroslav},
   author={Lassas, Matti},
   author={Uhlmann, Gunther},
   title={Inverse problems for Lorentzian manifolds and non-linear
   hyperbolic equations},
   journal={Invent. Math.},
   volume={212},
   date={2018},
   number={3},
   pages={781--857},
   issn={0020-9910},
   review={\MR{3802298}},
   doi={10.1007/s00222-017-0780-y},
}

\bib{lz}{article}{
   author={Lai, Ru-Yu},
   author={Zhou, Hanming},
   title={Inverse source problems in transport equations with external
   forces},
   journal={J. Differential Equations},
   volume={302},
   date={2021},
   pages={728--752},
   issn={0022-0396},
   review={\MR{4316016}},
   doi={10.1016/j.jde.2021.09.011},
}

\bib{losu}{article}{
   author={Lassas, Matti},
   author={Oksanen, Lauri},
   author={Stefanov, Plamen},
   author={Uhlmann, Gunther},
   title={On the inverse problem of finding cosmic strings and other
   topological defects},
   journal={Comm. Math. Phys.},
   volume={357},
   date={2018},
   number={2},
   pages={569--595},
   issn={0010-3616},
   review={\MR{3767703}},
   doi={10.1007/s00220-017-3029-0},
}

\bib{losu20}{article}{
   author={Lassas, Matti},
   author={Oksanen, Lauri},
   author={Stefanov, Plamen},
   author={Uhlmann, Gunther},
   title={The light ray transform on Lorentzian manifolds},
   journal={Comm. Math. Phys.},
   volume={377},
   date={2020},
   number={2},
   pages={1349--1379},
   issn={0010-3616},
   review={\MR{4115019}},
   doi={10.1007/s00220-020-03703-6},
}

\bib{masiello}{book}{
   author={Masiello, Antonio},
   title={Variational methods in Lorentzian geometry},
   series={Pitman Research Notes in Mathematics Series},
   volume={309},
   publisher={Longman Scientific \& Technical, Harlow; copublished in the
   United States with John Wiley \& Sons, Inc., New York},
   date={1994},
   pages={xx+175},
   isbn={0-582-23799-8},
   review={\MR{1294140}},
}

\bib{mr}{book}{
   author={Marsden, Jerrold E.},
   author={Ratiu, Tudor S.},
   title={Introduction to mechanics and symmetry},
   series={Texts in Applied Mathematics},
   volume={17},
   edition={2},
   note={A basic exposition of classical mechanical systems},
   publisher={Springer-Verlag, New York},
   date={1999},
   pages={xviii+582},
   isbn={0-387-98643-X},
   review={\MR{1723696}},
   doi={10.1007/978-0-387-21792-5},
}

\bib{mw}{article}{
   author={Marsden, Jerrold},
   author={Weinstein, Alan},
   title={Reduction of symplectic manifolds with symmetry},
   journal={Rep. Mathematical Phys.},
   volume={5},
   date={1974},
   number={1},
   pages={121--130},
   issn={0034-4877},
   review={\MR{0402819}},
   doi={10.1016/0034-4877(74)90021-4},
}

\bib{mmopr}{book}{
   author={Marsden, Jerrold E.},
   author={Misio\l ek, Gerard},
   author={Ortega, Juan-Pablo},
   author={Perlmutter, Matthew},
   author={Ratiu, Tudor S.},
   title={Hamiltonian reduction by stages},
   series={Lecture Notes in Mathematics},
   volume={1913},
   publisher={Springer, Berlin},
   date={2007},
   pages={xvi+519},
   isbn={978-3-540-72469-8},
   review={\MR{2337886}},
}

\bib{meyer}{article}{
   author={Meyer, Kenneth R.},
   title={Symmetries and integrals in mechanics},
   conference={
      title={Dynamical systems},
      address={Proc. Sympos., Univ. Bahia, Salvador},
      date={1971},
   },
   book={
      publisher={Academic Press, New York-London},
   },
   date={1973},
   pages={259--272},
   review={\MR{0331427}},
}

\bib{michel}{article}{
   author={Michel, Ren\'{e}},
   title={Sur la rigidit\'{e} impos\'{e}e par la longueur des
   g\'{e}od\'{e}siques},
   language={French},
   journal={Invent. Math.},
   volume={65},
   date={1981/82},
   number={1},
   pages={71--83},
   issn={0020-9910},
   review={\MR{0636880}},
   doi={10.1007/BF01389295},
}

\bib{mt23}{article}{
   author={Mu\~noz-Thon, Sebasti\'an},
   title={The boundary and scattering rigidity problems for simple MP-systems},
   date={2023},
   eprint={2312.02506},
   status={preprint},
}

\bib{mt24}{article}{
   author={Mu\~noz-Thon, Sebasti\'an},
   title={The linearization of the boundary rigidity problem for MP-systems and generic local boundary rigidity},
   date={2024},
   eprint={2401.11570},
   status={preprint},
}

\bib{kerr}{book}{
   author={O'Neill, Barrett},
   title={The geometry of Kerr black holes},
   publisher={A K Peters, Ltd., Wellesley, MA},
   date={1995},
   pages={xviii+381},
   isbn={1-56881-019-9},
   review={\MR{1328643}},
}

\bib{oniell}{book}{
   author={O'Neill, Barrett},
   title={Semi-Riemannian geometry},
   series={Pure and Applied Mathematics},
   volume={103},
   note={With applications to relativity},
   publisher={Academic Press, Inc. [Harcourt Brace Jovanovich, Publishers],
   New York},
   date={1983},
   pages={xiii+468},
   isbn={0-12-526740-1},
   review={\MR{0719023}},
}

\bib{psu}{book}{
   author={Paternain, Gabriel},
   author={Salo, Mikko},
   author={Uhlmann, Gunther},
   title={Geometric inverse problems---with emphasis on two dimensions},
   series={Cambridge Studies in Advanced Mathematics},
   volume={204},
   publisher={Cambridge University Press, Cambridge},
   date={2023},
   pages={xxiv+344},
   isbn={978-1-316-51087-2},
   review={\MR{4520155}},
}

\bib{pu2005}{article}{
   author={Pestov, Leonid},
   author={Uhlmann, Gunther},
   title={Two dimensional compact simple Riemannian manifolds are boundary
   distance rigid},
   journal={Ann. of Math. (2)},
   volume={161},
   date={2005},
   number={2},
   pages={1093--1110},
   issn={0003-486X},
   review={\MR{2153407}},
   doi={10.4007/annals.2005.161.1093},
}

\bib{rs}{article}{
   author={Romero, Alfonso},
   author={S\'{a}nchez, Miguel},
   title={On completeness of certain families of semi-Riemannian manifolds},
   journal={Geom. Dedicata},
   volume={53},
   date={1994},
   number={1},
   pages={103--117},
   issn={0046-5755},
   review={\MR{1299888}},
   doi={10.1007/BF01264047},
}

\bib{sw}{book}{
   author={Sachs, Rainer Kurt},
   author={Wu, Hung Hsi},
   title={General relativity for mathematicians},
   series={Graduate Texts in Mathematics},
   volume={Vol. 48},
   publisher={Springer-Verlag, New York-Heidelberg},
   date={1977},
   pages={xii+291},
   isbn={0-387-90218-X},
   review={\MR{0503498}},
}

\bib{sanchez}{article}{
   author={S\'{a}nchez, Miguel},
   title={Geodesic connectedness of semi-Riemannian manifolds},
   booktitle={Proceedings of the Third World Congress of Nonlinear Analysts,
   Part 5 (Catania, 2000)},
   journal={Nonlinear Anal.},
   volume={47},
   date={2001},
   number={5},
   pages={3085--3102},
   issn={0362-546X},
   review={\MR{1979206}},
   doi={10.1016/S0362-546X(01)00427-8},
}

\bib{stefanov89}{article}{
   author={Stefanov, Plamen D.},
   title={Uniqueness of the multi-dimensional inverse scattering problem for
   time dependent potentials},
   journal={Math. Z.},
   volume={201},
   date={1989},
   number={4},
   pages={541--559},
   issn={0025-5874},
   review={\MR{1004174}},
   doi={10.1007/BF01215158},
}

\bib{stefanov08}{article}{
   author={Stefanov, Plamen},
   title={Microlocal approach to tensor tomography and boundary and lens
   rigidity},
   journal={Serdica Math. J.},
   volume={34},
   date={2008},
   number={1},
   pages={67--112},
   issn={1310-6600},
   review={\MR{2414415}},
}

\bib{stefanov17}{article}{
   author={Stefanov, Plamen},
   title={Support theorems for the light ray transform on analytic
   Lorentzian manifolds},
   journal={Proc. Amer. Math. Soc.},
   volume={145},
   date={2017},
   number={3},
   pages={1259--1274},
   issn={0002-9939},
   review={\MR{3589324}},
   doi={10.1090/proc/13117},
}

\bib{plamen}{article}{
   author={Stefanov, Plamen},
   title={The Lorentzian scattering rigidity problem and rigidity of stationary  metrics},
   date={2023},
   eprint={2212.13213},
   status={preprint},
}

\bib{plamen24}{article}{
   author={Stefanov, Plamen},
   title={Boundary determination and local rigidity of analytic metrics in the Lorentzian scattering rigidity problem},
   date={2024},
   eprint={2404.15541},
   status={preprint},
}

\bib{su}{article}{
   author={Stefanov, Plamen},
   author={Uhlmann, Gunther},
   title={Boundary rigidity and stability for generic simple metrics},
   journal={J. Amer. Math. Soc.},
   volume={18},
   date={2005},
   number={4},
   pages={975--1003},
   issn={0894-0347},
   review={\MR{2163868}},
   doi={10.1090/S0894-0347-05-00494-7},
}

\bib{su08}{article}{
   author={Stefanov, Plamen},
   author={Uhlmann, Gunther},
   title={Boundary and lens rigidity, tensor tomography and analytic
   microlocal analysis},
   conference={
      title={Algebraic analysis of differential equations from microlocal
      analysis to exponential asymptotics},
   },
   book={
      publisher={Springer, Tokyo},
   },
   isbn={978-4-431-73239-6},
   date={2008},
   pages={275--293},
   review={\MR{2758914}},
   doi={10.1007/978-4-431-73240-2\_23},
}

\bib{su08.2}{article}{
   author={Stefanov, Plamen},
   author={Uhlmann, Gunther},
   title={Local lens rigidity with incomplete data for a class of non-simple
   Riemannian manifolds},
   journal={J. Differential Geom.},
   volume={82},
   date={2009},
   number={2},
   pages={383--409},
   issn={0022-040X},
   review={\MR{2520797}},
}

\bib{suv16}{article}{
   author={Stefanov, Plamen},
   author={Uhlmann, Gunther},
   author={Vasy, Andras},
   title={Boundary rigidity with partial data},
   journal={J. Amer. Math. Soc.},
   volume={29},
   date={2016},
   number={2},
   pages={299--332},
   issn={0894-0347},
   review={\MR{3454376}},
   doi={10.1090/jams/846},
}

\bib{suv21}{article}{
   author={Stefanov, Plamen},
   author={Uhlmann, Gunther},
   author={Vasy, Andr\'{a}s},
   title={Local and global boundary rigidity and the geodesic X-ray
   transform in the normal gauge},
   journal={Ann. of Math. (2)},
   volume={194},
   date={2021},
   number={1},
   pages={1--95},
   issn={0003-486X},
   review={\MR{4276284}},
   doi={10.4007/annals.2021.194.1.1},
}

\bib{sy}{article}{
   author={Stefanov, Plamen},
   author={Yang, Yang},
   title={The inverse problem for the Dirichlet-to-Neumann map on Lorentzian
   manifolds},
   journal={Anal. PDE},
   volume={11},
   date={2018},
   number={6},
   pages={1381--1414},
   issn={2157-5045},
   review={\MR{3803714}},
   doi={10.2140/apde.2018.11.1381},
}

\bib{sz}{article}{
   author={Strohmaier, Alexander},
   author={Zelditch, Steve},
   title={Spectral asymptotics on stationary space-times},
   journal={Rev. Math. Phys.},
   volume={33},
   date={2021},
   number={1},
   pages={Paper No. 2060007, 14},
   issn={0129-055X},
   review={\MR{4205370}},
   doi={10.1142/S0129055X20600077},
}

\bib{uv}{article}{
   author={Uhlmann, Gunther},
   author={Vasy, Andr\'{a}s},
   title={The inverse problem for the local geodesic ray transform},
   journal={Invent. Math.},
   volume={205},
   date={2016},
   number={1},
   pages={83--120},
   issn={0020-9910},
   review={\MR{3514959}},
   doi={10.1007/s00222-015-0631-7},
}

\bib{uyz}{article}{
   author={Uhlmann, Gunther},
   author={Yang, Yang},
   author={Zhou, Hanming},
   title={Travel time tomography in stationary spacetimes},
   journal={J. Geom. Anal.},
   volume={31},
   date={2021},
   number={10},
   pages={9573--9596},
   issn={1050-6926},
   review={\MR{4303934}},
   doi={10.1007/s12220-021-00620-5},
}

\bib{vw}{article}{
   author={Vasy, Andr\'{a}s},
   author={Wang, Yiran},
   title={On the light ray transform of wave equation solutions},
   journal={Comm. Math. Phys.},
   volume={384},
   date={2021},
   number={1},
   pages={503--532},
   issn={0010-3616},
   review={\MR{4252882}},
   doi={10.1007/s00220-021-04045-7},
}

\bib{visser}{article}{
   author={Visser, Matt},
   title={Acoustic black holes: horizons, ergospheres and Hawking radiation},
   journal={Classical Quantum Gravity},
   volume={15},
   date={1998},
   number={6},
   pages={1767--1791},
   issn={0264-9381},
   review={\MR{1628019}},
   doi={10.1088/0264-9381/15/6/024},
}

\bib{wang18}{article}{
   author={Wang, Yiran},
   title={Parametrices for the light ray transform on Minkowski spacetime},
   journal={Inverse Probl. Imaging},
   volume={12},
   date={2018},
   number={1},
   pages={229--237},
   issn={1930-8337},
   review={\MR{3810155}},
   doi={10.3934/ipi.2018009},
}

\bib{zhang}{article}{
   author={Zhang, Yang},
   title={The X-ray transform on a generic family of smooth curves},
   journal={J. Geom. Anal.},
   volume={33},
   date={2023},
   number={6},
   pages={Paper No. 190, 27},
   issn={1050-6926},
   review={\MR{4572203}},
   doi={10.1007/s12220-023-01236-7},
}

\bib{zhou18}{article}{
   author={Zhou, Hanming},
   title={Lens rigidity with partial data in the presence of a magnetic
   field},
   journal={Inverse Probl. Imaging},
   volume={12},
   date={2018},
   number={6},
   pages={1365--1387},
   issn={1930-8337},
   review={\MR{3917833}},
   doi={10.3934/ipi.2018057},
}

\end{biblist}
\end{bibdiv}

\end{document}